\tikzset{>=to} 
\definecolor{rouge}{rgb}{0.95,0.1,0.15}
\definecolor{forestgreen}{rgb}{0.13,0.54,0.13}
\definecolor{bleu}{rgb}{0.1,0.1,0.9}
\newcommand{\modif}[1]{#1}
\setlist[itemize]{leftmargin=*}                  
\setlist[enumerate]{leftmargin=*,                
                    label=\textup{(\roman*)}}    
\DeclareSymbolFont{largesymbols}{OMX}{zplm}{m}{n} 
\let\originalleft\left     
\let\originalright\right
\renewcommand{\left}{\mathopen{}\mathclose\bgroup\originalleft}
\renewcommand{\right}{\aftergroup\egroup\originalright}
\numberwithin{equation}{section}
\newcolumntype{C}{>{$}c<{$}}              
\newcolumntype{D}{>{$\displaystyle}l<{$}} 
\newcommand{\fld}[1]{\mathbb{#1}} 
\newcommand{\ZZ}{\fld{Z}}
\newcommand{\NN}{\fld{N}}
\newcommand{\RR}{\fld{R}}
\newcommand{\CC}{\fld{C}}
\newcommand{\dd}{\mathrm{d}}
\newcommand{\alg}[1]{\mathcal{#1}}  
\newcommand{\modSB}{~\operatorname{mod}\,} 
\theoremstyle{plain}
\newtheorem{theorem}{Theorem}[section]
\newtheorem{lemma}[theorem]{Lemma}
\newtheorem{proposition}[theorem]{Proposition}
\newtheorem{corollary}[theorem]{Corollary}
\newtheorem{definition}[theorem]{Definition}
\newcommand{\bili}[2]{\left\langle #1,#2\right\rangle}
\newcommand{\sym}[1]{O_{#1}} 
\newcommand{\tot}[1]{L_{#1}} 
\newcommand{\comm}[1]{\left[ #1 \right]} 
\newcommand{\acomm}[1]{\left\lbrace #1 \right\rbrace} 
\newcommand{\dcover}[1]{\widetilde{#1}}
\newcommand{\Lad}[1]{L_{#1}} 
\newcommand{\Dun}[1]{\mathcal{D}_{#1}}
\newcommand{\Clif}{Cl}
\newcommand{\conj}[1]{\overline{#1}}
\newcommand{\Lie}[1]{\mathfrak{#1}} 
\newcommand{\osp}{\Lie{osp}}
\newcommand{\DDop}{\underline{D}} 
\newcommand{\xun}{\underline{x}} 
\newcommand{\Euler}{\mathbb{E}}
\newcommand{\Lapl}{\Delta}
\newcommand{\DLapl}{\Delta_{\kappa}}
\newcommand{\Asym}{\mathfrak{SA}}
\newcommand{\CK}{\mathbf{CK}}
\newcommand{\Poly}{\mathcal{P}}
\newcommand{\Mono}{\mathcal{M}}
\newcommand{\Harmo}{\mathcal{H}}
\DeclareMathOperator{\Pin}{Pin}
\newcommand{\Ortho}{\mathcal{O}}
\newcommand{\SymAlg}{\mathfrak{SA}_m}
\newcommand{\sumx}{\mathbf{x}^2}
\newcommand{\monopo}{\Phi}
\newcommand{\smallmat}[1]{\left( \begin{smallmatrix}
		#1
\end{smallmatrix}\right)} 
\newcommand{\dsig}[1]{\dcover{\sigma}_{#1}}
\newcommand{\dtau}{\dcover{\tau}} 
\newcommand{\hyperF}[5]{{}_{#1}F_{#2} \left({#3 \atop #4}\ \middle| \ #5 \right)}
\newcommand{\stdquo}{L_{\lambda,\Lambda}(V)}
\begin{document}

\title[Representations of the dihedral Dunkl--Dirac symmetry algebra]{Finite-dimensional representations of the symmetry algebra of the dihedral Dunkl--Dirac operator}

\author[H De Bie]{Hendrik De Bie}

\address[Hendrik De Bie]{ Clifford Research Group,
Department of Electronics and Information Systems,
Faculty of Engineering and Architecture,
Ghent University,
Krijgslaan 281--S8, 9000 Gent, Belgium}
\email{Hendrik.DeBie@UGent.be}

\author[A Langlois-R\'emillard]{Alexis Langlois-R\'emillard}

\address[Alexis Langlois-R\'emillard]{ Department of Applied Mathematics, Computer Science and Statistics, Faculty of Sciences, Ghent University, Krijgslaan 281--S9, 9000 Gent, Belgium.
}
\email{Alexis.LangloisRemillard@UGent.be}

\author[R Oste]{Roy Oste}

\address[Roy Oste]{ Department of Applied Mathematics, Computer Science and Statistics, Faculty of Sciences, Ghent University, Krijgslaan 281--S9, 9000 Gent, Belgium.
}
\email{Roy.Oste@UGent.be}

\author[J Van der Jeugt]{Joris Van der Jeugt}

\address[Joris Van der Jeugt]{ Department of Applied Mathematics, Computer Science and Statistics, Faculty of Sciences, Ghent University, Krijgslaan 281--S9, 9000 Gent, Belgium.}
\email{Joris.VanderJeugt@UGent.be}

\date{\today}

\keywords{Dunkl--Dirac equation, Dunkl operator, symmetry algebra, total angular operator, dihedral root systems, finite-dimensional representations}


\begin{abstract}

The Dunkl--Dirac operator is a deformation of the Dirac operator by means of Dunkl derivatives. We investigate the symmetry algebra generated by the elements supercommuting with the Dunkl--Dirac operator and its dual symbol. This symmetry algebra is realised inside the tensor product of a Clifford algebra and a rational Cherednik algebra associated with a reflection group or root system. For reducible root systems of rank three, we determine all the irreducible finite-dimensional representations and conditions for unitarity. Polynomial solutions of the Dunkl--Dirac equation are given as a realisation of one family of such irreducible unitary representations.
\end{abstract}

\maketitle

\tableofcontents

\onehalfspacing

%
%
\section{Introduction} \label{sec:Intro}

The aim of this work is to continue the inquiry of the representation theory of the symmetry algebra, generated by symmetries supercommuting with an $\osp(1|2)$ realisation linked to a Dirac-like operator deformed by a reflection group~\cite{de_bie_algebra_2018}. We consider the case of a Dunkl--Dirac operator acting on a three-dimensional space as this is the lowest dimension where the symmetry algebra portrays interesting behaviour. This work enlarges the class of studied systems to all the reducible rank 3 root systems in a three-dimensional setting whereas only the cases $A_1\oplus A_1\oplus A_1$~\cite{de_bie_diracdunkl_2016}, $A_2$~\cite{debie_total_2018} and partially $G_2$~\cite{LROVarna19}  were known beforehand.

Dunkl operators are a generalisation of partial derivatives introduced by Dunkl~\cite{dunkl_differential-difference_1989} in the context of orthogonal polynomials in several variables. The constituents of those operators are: a reflection group $W\subset\mathcal{O}(N)$ acting on the Euclidean space $\RR^N$ and its root system $R$, a $W$-invariant function $\kappa : R\to \CC$, and the algebra of polynomials in $N$ variables $x_1,\dots , x_N$. Then the algebra $\alg{A}$ generated by the group $W$, the $N$ Dunkl operators $\Dun{1}, \dots,\Dun{N}$, and the $N$ multiplication operators $x_1,\dots x_N$ is a realisation of a rational double affine Hecke algebra, or rational Cherednik algebra~\cite{etingof_symplectic_2002}. The symmetry algebra we consider is the centralizer of an $\Lie{osp}(1|2)$ realisation inside the tensor product of a Clifford algebra and $\alg{A}$. The symmetry algebra is in fact the full centralizer of the $\osp(1|2)$ algebra realisation. This is considered in generality in work in preparation by the third author~\cite{roy_prep}.  It is related to the $(\operatorname{Pin}(N),\,\Lie{osp}(1|2))$ Howe duality. Other deformations of Howe dualities were recently studied in the rational Cherednik algebra context~\cite{ciubotaru_dunkl-cherednik_2018,ciubotaru_deformations_2020}. 

With a generalisation of derivatives, it is only natural to study the equivalent to various classical differential operators. We focus on the Dunkl--Dirac equation in three dimensions. It was first studied algebraically in the context of Bannai--Ito algebras~\cite{de_bie_diracdunkl_2016} for the $A_1\oplus A_1\oplus A_1$ root system and then the definition was extended to all root systems~\cite{de_bie_algebra_2018}. There are infinitely many root systems of rank 3, divided in three cases presented below. 
\begin{enumerate}
	\item Three rank 1 root systems: $A_1\oplus A_1\oplus A_1$, with Weyl group $\ZZ_2\times \ZZ_2 \times \ZZ_2$.
	\item The sum of a rank 1  and a rank 2 root systems, so the infinite family of dihedral root systems with an $A_1$ part: $A_1\oplus I_2(m)$ ($m\geq 3$). Their Coxeter group is $\ZZ_2\times D_{2m}$.
	\item The irreducible rank 3 root systems $A_3$, $B_3$ and $C_3$, of respective Weyl groups $S_4$ and $S_3\rtimes \ZZ_2^3$ for the two others.
\end{enumerate}
This article addresses the reducible root systems of rank 3 (cases (i) and (ii)) as they can all be studied using the same method. The study of the irreducible root systems $A_3$ and $B_3$ ($C_3$ has the same Weyl group and is indiscernible in our context) requires different methods and is subject to ongoing investigations.

Dihedral groups in the Dunkl operators context are well-studied examples. They offer a tractable non-trivial behaviour and already divide in two different cases \modif{whether the parameter $m$ for the dihedral group is odd or even.} Already in the original paper of Dunkl~\cite{dunkl_differential-difference_1989}, the harmonics of the dihedral Dunkl-Laplacian were given. The complete finite-dimensional representation theory in the dihedral case for rational Cherednik algebras is also known~\cite{chmutova_representations_2006}. They are often the first non-trivial examples one can hope to consider completely. Recent investigations on the dihedral case include: closed formulas for intertwining operators~\cite{xu_intertwining_2019,de_bie_lian_2020}, geometric properties of the Calogero--Moser space associated with dihedral groups~\cite{bonnafe_2018} and the complete descriptions of the deformed unitary Howe dual pairs~\cite{ciubotaru_deformations_2020}.  

Here we study the finite-dimensional irreducible representations of the symmetry algebra of the dihedral Dunkl--Dirac operator acting on a three-dimensional space. Albeit dihedral root systems are of rank 2, the symmetry algebra becomes interesting only for dimension three and higher. Adding the extra dimension by means of an $A_1$ root system is the most general approach. Note that the symmetry algebra associated with the reflection group $D_{2m}$ is a subalgebra of the one associated with $\ZZ_2\times D_{2m}$ and is not simply obtained by setting the function $\kappa$ to zero on the $\ZZ_2$ part. This is a difference with the study of the $S_3$ case~\cite{debie_total_2018}: the extra $\ZZ_2$ component adds more constraints on the forms the representations can take. Another difference is that restricting representations of the symmetry algebra to representations of $W$ now gives rise to projective representations of $W$. Projective representations are representations of the  double covering of the group. For a reflection group, considered as a subgroup of $\mathcal{O}(N)$, the two double coverings $\dcover{W}^+$ and $\dcover{W}^-$ can be viewed as the restrictions of the double coverings $\operatorname{Pin}_{+}(N)$ or $\operatorname{Pin}_{-}(N)$ of $\mathcal{O}(N)$. It was not apparent for the root system $A_2$ because the group $S_3$ does not admit non-trivial double coverings. We give the detailed construction of the representation of the double coverings of the direct sum of a dihedral group with $\ZZ_2$ in Appendix~\ref{app:doublecover} following Morris~\cite{Morris76}.

The main results of this article and its structure are now reviewed. Section~\ref{sec:DDDeq} reviews the background of Dunkl operators in general and in the dihedral cases. Section~\ref{sec:symalg} first presents the general results and definitions on the symmetry algebra, proves a useful proposition on the square of the central symmetry for any reflection group acting on a three-dimensional space (Proposition~\ref{prop:sqO123}), then constructs a new set of generators (Proposition~\ref{prop:commOdihedral}), continues with the construction of a pair of ladder operators (Proposition~\ref{prop:ladderdihedral}) and finishes with a discussion on the possible \modif{unitary structures}. Section~\ref{sec:Repr} states and proves the main results: a classification of the finite-dimensional irreducible and unitary representations (Theorems~\ref{thm:irrepsodd} and~\ref{thm:irrepseven} for the odd and even case respectively). The main ideas of the proofs are explained in Subsection~\ref{ssec:proofidea} and the details are given for the even case in Subsection~\ref{ssec:proofeven}. The proof of the odd case is very similar and has been omitted\footnote{Readers interested in the details can find them in the following version: \href{https://arxiv.org/abs/2010.03381v2}{arXiv:2010.03381v2}.}. Finally, Section~\ref{sec:Mono} studies the important example of the monogenic representation families (Propositions~\ref{prop:monoirrepodd}, and~\ref{prop:monoirrepeven}). For the convenience of the reader, we also included Appendix~\ref{app:doublecover} to recall results on the double coverings of reflections groups and write down the complete construction for $W=\ZZ_2\times D_{2m}$ of the irreducible finite-dimensional representations of the double coverings $\dcover{W}^{-}$ and $\dcover{W}^+$  (Theorem~\ref{thm:irrepsW}).

\section{The dihedral Dunkl--Dirac equation}\label{sec:DDDeq}

In this section, the required theory of Dunkl operators is recalled, both in the general case and in the specific dihedral case. The book of Dunkl and Xu~\cite{dunkl2014orthogonal} contains the material for Dunkl operators. The definitions of the generalized Dunkl--Laplace and Dunkl--Dirac operators can be found in~\cite{de_bie_algebra_2018}.

\subsection{The Dunkl--Laplace and Dunkl--Dirac operators}\label{ssec:DunklLaplDunklDirac}

Dunkl operators~\cite{dunkl_differential-difference_1989} are constructed by taking a reflection group $W\subset \mathcal{O}(N)$ (or a complex reflection group~\cite{dunkl_dunkl_2001}) and its associated  root system $R$ along with a $W$-invariant function $\kappa:R\to \CC$. Let the canonical bilinear form on $\RR^N$ be denoted by $\bili{-}{-}$. To any root $r\in R$, associate the reflection $\sigma_r\in W$ by the hyperplane orthogonal to $r$ 
\begin{equation}
	\sigma_r(x) = x - 2\frac{\bili{x}{r}}{\bili{r}{r}}r.
\end{equation}
The action of $W$ can be extended to the space of functions $f:\RR^N\to\CC$ by 
\begin{equation}
	wf(x) := f(w^{-1}x), \quad w\in W, \, x\in \RR^N.
\end{equation}
The $W$-invariance of $\kappa$ means it is constant on the $W$-orbits of $R$. (For more on reflection groups and root systems, see Humphreys's book~\cite{humphreys_reflection_1990}.)

Let the unit vector in position $j$ be denoted by $\xi_j$. The Dunkl operator linked to the coordinate $j$ is given then by
\begin{equation}\label{eq:defDun}
	\Dun{j}(f)(x):= \partial_{x_j}(f)(x) + \sum_{r\in R_+} \kappa(r)\bili{\xi_j}{r} \frac{f(x) -\sigma_r f(x)}{\bili{x}{r}} .
\end{equation}

This definition also enables to give a definition of $\Dun{v}$ for any vector $v\in\RR^N$. It is a well-known fact of Dunkl operators that $w\Dun{v}w^{-1} = \Dun{w\cdot v}$ for $v\in\RR^N$ and $w\in W$. Dunkl operators constitute a set of commuting differential operators~\cite{dunkl_differential-difference_1989}
\begin{equation}
	\comm{\Dun{i},\Dun{j}} = 0.
\end{equation}
Thus, it is justified to consider them as a generalisation of partial derivatives. 

The group $W$, the multiplication operators $x_1,\dots , x_N$ and the Dunkl derivatives $\Dun{1},\dots, \Dun{N}$ subject to the commutation relations, assuming the roots are normalized,
\begin{equation}\label{eq:commDx}
	\comm{\Dun{i},x_j} =  \delta_{ij} + \sum_{r\in R^+} 2\kappa(r) \bili{\xi_i}{r}\bili{\xi_j}{r} \sigma_{r}
\end{equation}
form an algebra that we denote $\alg{A}$. The algebra $\alg{A}$ is a faithful representation of a rational Cherednik algebra and it acts on polynomials. It is the appropriate formalism for defining operators analogous to their classical counterparts. For example, the Laplacian in the Dunkl setting is given  by
\begin{equation}
	\Lapl_{\kappa}:= \sum_{j=1}^N \Dun{j}^2.
\end{equation}
It is a homogeneous  endomorphism of degree $-2$ on the space of polynomials~\cite{dunkl_differential-difference_1989}. 

Define three elements of $\alg{A}$. Let $\gamma$ denote the sum of the values of $\kappa$ over the positive roots, $\sumx$ be the sum of the square of $x_i$ and $\Euler$ be the classical Euler operator
\begin{align}\label{eq:gamx2eul}
\gamma&:= \sum_{\alpha\in R_+} \kappa(\alpha), & \sumx&:=\sum_{j=1}^N x_i^2, & \Euler&:= \sum_{j=1}^N x_j \partial_{x_j}.
\end{align}
These operators yield an $\Lie{sl}_2$ realisation by the formulas extracted from~\cite{de_bie_algebra_2018}:
\begin{align}\label{eq:commsl2}
\comm{\Euler,\sumx} &= 2\sumx, & \comm{\Euler,\Lapl_{\kappa}} &= -2\Lapl_{\kappa}, & \comm{\sumx,\Lapl_{\kappa}} &= -4(\Euler + N/2+\gamma).
\end{align}
\modif{Note that in~\cite{de_bie_algebra_2018}, the symbol $\Euler$ was used to denote (when specialized to the Dunkl case)}
\begin{equation}
 \modif{\frac{1}{2}\sum_{j=1}^N \acomm{x_j,\Dun{j}} = \sum_{j=1}^N (x_j\Dun{j} + \comm{\Dun{j},x_j}/2) = \sum_{j=1}^N x_j\partial_{x_j} + N/2 + \gamma,}
\end{equation} 
\modif{where the last equality follows by mean of~\eqref{eq:defDun} and~\eqref{eq:commDx}. It includes the constants $N$ and $2\gamma$, hence the difference in the commutation relations~\eqref{eq:commsl2} and also~\eqref{eq:commddopxun}. Here, we follow the definition of~\cite{debie_total_2018}.}

Consider a Clifford algebra $\Clif(N)$ of rank $N$ with Clifford generators $e_1,\dots, e_N$ satisfying
\begin{align}
	e_j^2 &= 1, 	& 	\acomm{e_i,e_j}&= 0,\quad (i\neq j).
\end{align}
In this Clifford algebra setting, define the \emph{Dunkl--Dirac operator} and its dual symbol the \emph{vector variable} by
\begin{align}\label{eq:osp12DD}
	\DDop &:= \sum_{j=1}^N e_j\Dun{j}, & & \quad \xun:=\sum_{j=1}^N e_jx_j, &&\text{so that} & \DDop^2 &= \Lapl_{\kappa}, & \xun^2 &= \sumx.
\end{align}
The anti-commutation relation between $\DDop$ and $\xun$ is then given by
\begin{equation}\label{eq:commddopxun}
\acomm{\DDop,\xun} = 2(\Euler + N/2 + \gamma).
\end{equation}
Formally, $\DDop$ and $\xun$ lie in the tensor product $\Clif(N)\otimes \alg{A}$. We omit the tensor product symbol, trusting the different notations will be sufficient to permit the reader to avoid confusion.

The algebra $\Clif(N)\otimes \alg{A}$  contains a realisation of the Lie superalgebra $\osp(1|2)$ as the two following relations hold~\cite{orsted_howe_2009,de_bie_algebra_2018}:
\begin{align}
\comm{\acomm{\DDop,\xun},\DDop} &= -2\DDop, & \comm{\acomm{\DDop,\xun},\xun} &= 2\xun,
\end{align}
which \modif{is a} well-known presentation of the Lie superalgebra  $\osp(1|2)$~\cite{ganchev_lie_1980}.

Note that putting $e_j^2 =-1$ has an influence on the representation theory as it will change the double covering of the group $W$. We decided to keep the convention used in the study of the $S_3$ case~\cite{debie_total_2018} to help the reader
compare. The definitions of the symmetries and their relations are given for both options in~\cite{de_bie_algebra_2018} and the different outcomes of double coverings
are covered in Corollary~\ref{coro:doublecoverings}.

Before specialising to the three-dimensional case, one general result on some specific elements of $\alg{A}$ will be useful. Throughout the article, we make use of the following short-hand notations:
\begin{gather}
C_{ij} := \comm{\Dun{i},x_j} \stackrel{\eqref{eq:commDx}}{=} \comm{\Dun{j},x_{i}}, \qquad  C_{ij} = C_{ji},\\
L_{ij} := x_i\Dun{j} - x_j\Dun{i}, \qquad L_{ij} = -L_{ji}, \qquad L_{ii} = 0.
\end{gather}
The following theorem relates the $L_{ij}$'s and the $C_{ij}$'s for general root systems. In particular, equation~\eqref{eq:sumLLandLC} will be useful in the proof of Proposition~\ref{prop:sqO123}.
\begin{theorem}[{\cite[Theorem~2.5]{de_bie_algebra_2018} and \cite[Proposition~3.1]{feigin_dunkl_2015}}]
	 Let $i$, $j$, $k$, $l$ be four non-necessarily distinct integers in  $\{1,\dots, N\}$. The commutation relation between $L_{ij}$ and $L_{kl}$ is given by 
	\begin{align}
	\comm{L_{ij}, L_{kl}} &= L_{il}C_{jk} + L_{jk}C_{il} + L_{ki}C_{lj} + L_{lj}C_{ki}\\
		&= C_{jk}L_{il} + C_{il}L_{jk} + C_{lj}L_{ki} + C_{ki}L_{lj},
	\end{align}
	and the following identities hold
	\begin{gather}
	\acomm{L_{ij}, L_{kl}} + \acomm{L_{ki}, L_{jl}} + \acomm{L_{jk}, L_{il}} =0,\\
	\comm{L_{ij}, C_{kl}} + \comm{L_{ki}, C_{jl}} + \comm{L_{jk}, C_{il}} = 0,\\
	L_{ij}L_{kl} + L_{ki}L_{jl} + L_{jk}L_{il} = L_{ij}C_{kl} + L_{ki}C_{jl}+L_{jk}C_{il}.\label{eq:sumLLandLC}
	\end{gather}
\end{theorem}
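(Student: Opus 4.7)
The plan is to prove each identity by direct computation in the algebra $\alg{A}$, using only the definitions $L_{ij}=x_i\Dun{j}-x_j\Dun{i}$ and $C_{ij}=\comm{\Dun{i},x_j}$, together with the commuting properties $\comm{\Dun{i},\Dun{j}}=0$ and $\comm{x_i,x_j}=0$. No further input from the root system is required: the identities hold universally in the rational Cherednik algebra.

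For the commutator $\comm{L_{ij},L_{kl}}$, I would first reduce by bilinearity to a sum of four brackets of the shape $\comm{x_a\Dun{b},x_c\Dun{d}}$. Each such bracket collapses, via the Leibniz rule and the commuting of the $x$'s (resp.\ of the Dunkl operators) among themselves, to $x_aC_{bc}\Dun{d}-x_cC_{ad}\Dun{b}$. The resulting eight-term expression must then be rearranged into the target four-term sum. The subtlety is that $C_{bc}$ contains reflections which do not commute with the $x$'s or the Dunkl operators, so displacing $C_{bc}$ introduces correction terms; the two presentations given in the statement correspond precisely to moving $C$ to the right versus to the left, and their equivalence is itself the $\comm{L,C}$ cyclic identity. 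The pairings that produce each $L_{ad}C_{bc}$ are forced by the antisymmetry $L_{ab}=-L_{ba}$ and the symmetry $C_{ab}=C_{ba}$.

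The remaining identities follow by manipulation. The $\comm{L,C}$ cyclic identity arises either as the obstruction mentioned above, or by applying $\comm{L_{ij},-}$ to $C_{kl}=\comm{\Dun{k},x_l}$ via the graded Jacobi identity; in either approach one exploits that $\comm{L_{ij},\Dun{k}}$ and $\comm{L_{ij},x_l}$ admit a closed form extending the classical rotation action by reflection corrections. The anticommutator relation then follows from $\acomm{L_{ab},L_{cd}}=2L_{ab}L_{cd}-\comm{L_{ab},L_{cd}}$: summing cyclically, the commutator contribution rearranges by pairing under the symmetry $C_{ab}=C_{ba}$ to a sum that vanishes thanks to the $\comm{L,C}$ identity, while the product contribution collapses by the quadratic Pl\"ucker-type identity treated next.

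The main obstacle is the quadratic identity $L_{ij}L_{kl}+L_{ki}L_{jl}+L_{jk}L_{il}=L_{ij}C_{kl}+L_{ki}C_{jl}+L_{jk}C_{il}$. Expanding the left-hand side produces twelve quartic products in the $x$'s and the Dunkl operators; the commutative pieces (those one would obtain pretending all operators commuted) vanish under cyclic summation by the classical Pl\"ucker relation. The surviving contributions come from commuting a Dunkl operator past an $x$, each producing a factor of the form $C_{\cdot\cdot}$; careful bookkeeping across the three cyclic summands should collapse these into the announced right-hand side. The delicate part is ensuring that the reflection-valued corrections recombine correctly without having to perform a case analysis on the specific root system.
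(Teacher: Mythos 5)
The paper itself contains no proof of this theorem: it is imported verbatim from the two cited references, so your attempt can only be measured against those sources, which do prove it by exactly the kind of direct computation you outline, in the same universality (only $\comm{x_a,x_b}=0$, $\comm{\Dun{a},\Dun{b}}=0$ and $C_{ab}=C_{ba}$ are needed). Your reduction $\comm{x_a\Dun{b},x_c\Dun{d}}=x_aC_{bc}\Dun{d}-x_cC_{ad}\Dun{b}$ is correct, and so is your remark that the two displayed forms of the commutator identity differ by instances of the cyclic $\comm{L,C}$ identity. What is missing is the one algebraic fact that makes your deferred ``bookkeeping'' close up, and it is precisely the answer to the worry you state at the end: the element $\comm{C_{ab},\Dun{c}}$ is \emph{totally symmetric} in $(a,b,c)$. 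Indeed, the Jacobi identity and $\comm{\Dun{a},\Dun{c}}=0$ give
\begin{equation*}
\comm{C_{ab},\Dun{c}}=\comm{\comm{\Dun{a},x_b},\Dun{c}}=\comm{\comm{\Dun{c},x_b},\Dun{a}}=\comm{C_{cb},\Dun{a}},
\end{equation*}
and this transposition together with $C_{ab}=C_{ba}$ generates full symmetry. This lemma is unavoidable because your ``forced pairings'' do not work term by term: $x_iC_{jk}\Dun{l}-x_lC_{jk}\Dun{i}\neq L_{il}C_{jk}$ in general. The discrepancy between your collapsed eight-term expression and the target is
\begin{multline*}
x_i\comm{C_{jk},\Dun{l}}-x_l\comm{C_{jk},\Dun{i}}+x_j\comm{C_{il},\Dun{k}}-x_k\comm{C_{il},\Dun{j}}\\
+x_k\comm{C_{jl},\Dun{i}}-x_i\comm{C_{jl},\Dun{k}}+x_l\comm{C_{ik},\Dun{j}}-x_j\comm{C_{ik},\Dun{l}},
\end{multline*}
and it vanishes only by cancelling terms \emph{across} different $C$-groups using the symmetry (for instance the two terms multiplying $x_i$ cancel because $\comm{C_{jk},\Dun{l}}=\comm{C_{jl},\Dun{k}}$). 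The same lemma disposes of the cyclic $\comm{L,C}$ identity and of the reflection corrections in the quadratic identity~\eqref{eq:sumLLandLC}, so your claim that no root-system analysis is needed is vindicated --- but without this observation the proof is incomplete exactly at its crucial step.

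Your derivation of the anticommutator identity is moreover wrong as described. The cyclic sum of commutators does \emph{not} vanish: substituting the commutator identity into each term and using $C_{ab}=C_{ba}$ gives
\begin{equation*}
\comm{L_{ij},L_{kl}}+\comm{L_{ki},L_{jl}}+\comm{L_{jk},L_{il}}=2\left(L_{ij}C_{kl}+L_{ki}C_{jl}+L_{jk}C_{il}\right).
\end{equation*}
The anticommutator identity then holds because this is exactly cancelled by twice the product sum, which equals $2(L_{ij}C_{kl}+L_{ki}C_{jl}+L_{jk}C_{il})$ by~\eqref{eq:sumLLandLC}. Under the pattern you describe --- commutator contribution vanishing on its own via the $\comm{L,C}$ identity, product contribution collapsing to the (nonzero) right-hand side of~\eqref{eq:sumLLandLC} --- the cyclic sum of anticommutators would come out as $2(L_{ij}C_{kl}+L_{ki}C_{jl}+L_{jk}C_{il})\neq 0$, a contradiction. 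The ingredients you chose do suffice, but only with this corrected cancellation.
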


\subsection{The dihedral Dunkl operators}

Let $I_2(m)$ denote the root system associated with the dihedral group $D_{2m}$ of order $2m$. For $m=1, 2, 3, 4, 6$, it is a crystallographic root system, respectively $A_1$, $A_1\oplus A_1$, $A_2$, $B_2$ and $G_2$. 

We will consider for the remainder of the article $m\geq 2$ and $N=3$ and put $W= \ZZ_2\times D_{2m}$ acting on $\RR^3$ with Coxeter presentation given by
\begin{equation}
W = \left\langle \sigma_0,\, \sigma_1,\, \sigma_m \mid \sigma_0^2 = \sigma_1^2 = \sigma_m^2 = (\sigma_0\sigma_1)^2 = (\sigma_0\sigma_m)^2 = (\sigma_1\sigma_m)^m =1\right\rangle,
\end{equation}
where $\sigma_0$ is the generator for $\ZZ_2$. We choose the standard root system of $A_1 \oplus I_2(m)$ as
\begin{align}
\alpha_0 &= (0,\,0,\,1) & \alpha_j &= (\sin(j\pi/m),\,-\cos(j\pi/m),\,0), \quad j=1,\dots, 2m,
\end{align}
with the set of positive roots  given by $R_+ = \left\{\alpha_0, \alpha_1, \dots, \alpha_m\right\}$. 

Before proceeding any further, we warn the reader that in previous works on $A_2 = I_2(3)$~\cite{debie_total_2018} and $G_2 = I_2(6)$~\cite{LROVarna19},  the root system used is the natural embedding of the roots in the $(1,1,1)$-hyperplane. In the $A_2$ case, the change of variables to $u$, $v$ and $w$ corresponds to $x_1=v$, $x_2=u$ and $w=x_3$ in this article.  Here we decided to follow the same convention for the dihedral groups as Dunkl~\cite{dunkl_differential-difference_1989} and Humphreys~\cite{humphreys_reflection_1990}. The associated reflections $\sigma_j$ are given in matrix form by
\begin{align}
\sigma_0 &= 
\begin{pmatrix}
1&0&0\\
0&1&0\\
0&0&-1
\end{pmatrix}, & 
\sigma_j &= 
\begin{pmatrix}
\cos (2j\pi/m) & \sin(2j\pi /m) & 0 \\
\sin(2j\pi/m) & -\cos(2j\pi/m) &0 \\
0&0&1
\end{pmatrix}.
\end{align}

The structure of dihedral groups depends on whether $m$ is even or odd. When it is even, the elements $\sigma_1$ and $\sigma_m$ are in two different conjugacy classes; when $m$ is odd, they are in the same. This has an impact on the double coverings (see Appendix~\ref{app:doublecover}) of the group and will impact slightly the representation theory.

With this in mind, a $W$-invariant function $\kappa$ is defined by at most three constants $\kappa_0 := \kappa(\alpha_0)$, $\kappa_1 := \kappa(\alpha_1)$ and $\kappa_m := \kappa(\alpha_m)$ linked to the $W$-orbits of $\alpha_0$, $\alpha_1$ and $\alpha_m$ respectively. Understand that $\kappa_1=\kappa_m$ when $m$ is odd. To this effect then, for positive $j$, $\kappa(\alpha_j) = \kappa_1$ when $m$ is odd; and $\kappa(\alpha_{2j}) = \kappa_m$, $\kappa(\alpha_{2j+1})=\kappa_1$ when $m$ is even. The Dunkl operators are then given, when $m$ is odd, by
\begin{equation}
\begin{aligned}
\Dun{1}(f(x)) &= \partial_{x_1}f(x) + \kappa_1 \sum_{j=1}^m  \frac{\sin(\tfrac{j\pi}{m})(f(x) - \sigma_j f(x))}{\sin (\tfrac{j\pi}{m}) x_1 - \cos(\tfrac{j\pi}{m}) x_2} ;\\
\Dun{2}(f(x)) &= \partial_{x_2} f(x) -\kappa_1 \sum_{j=1}^m \frac{\cos(\tfrac{j\pi}{m})(f(x) - \sigma_j f(x))}{\sin(\tfrac{j\pi}{m}) x_1 - \cos(\tfrac{j\pi}{m}) x_2} ;\\
\Dun{3}(f(x)) &= \partial_{x_3} f(x)  + \kappa_0 {f(x) - \sigma_0 f(x)\over x_3};
\end{aligned}
\end{equation}
and, when $m$ is even, by
\begin{align}
\Dun{1}(f(x)) &= \partial_{x_1}f(x) + \kappa_1\sum_{j=1\atop j \text{ odd}}^{m-1}  \frac{\sin(\tfrac{j\pi}{m})(f(x) - \sigma_{j} f(x))}{\sin (\tfrac{j\pi}{m}) x_1 - \cos(\tfrac{j\pi}{m}) x_2}  \nonumber
+ \kappa_m \sum_{j=1\atop j\text{ even}}^{m}\frac{\sin(\tfrac{j\pi}{m})(f(x) - \sigma_{j} f(x))}{\sin (\tfrac{j\pi}{m}) x_1 - \cos(\tfrac{j\pi}{m}) x_2} ; \nonumber\\
\Dun{2}(f(x)) &= \partial_{x_2} f(x) -  \kappa_1\sum_{j=1\atop j \text{ odd}}^{m-1}  \frac{\cos(\tfrac{j\pi}{m})(f(x) - \sigma_{j} f(x))}{\sin (\tfrac{j\pi}{m}) x_1 - \cos(\tfrac{j\pi}{m}) x_2} 
- \kappa_m \sum_{j=1\atop j\text{ even}}^{m}\frac{\cos(\tfrac{j\pi}{m})(f(x) - \sigma_{j} f(x))} {\sin (\tfrac{j\pi}{m}) x_1 - \cos(j\pi/m) x_2} ;\\
\Dun{3}(f(x)) &= \partial_{x_3} f(x)  + \kappa_0 \frac{f(x) - \sigma_0 f(x)}{x_3}.\nonumber
\end{align}

For this reflection group $W= \ZZ_2\times D_{2m}$, equation~\eqref{eq:commddopxun} becomes
 \begin{equation}
 \acomm{\DDop,\xun} = 2 \left( \Euler + \frac32 +   \frac{m}{2}(\kappa_1 + \kappa_m) + \kappa_0\right).
 \end{equation}

\section{The symmetry algebra of the Dunkl--Dirac operator}\label{sec:symalg}

In this section, we define the algebra we study by giving a generating set of elements supercommuting with the Dunkl--Dirac operator $\DDop$ and the vector variable $\xun$. The definition is not restricted to the dihedral case and thus we take the opportunity to prove a result, Proposition~\ref{prop:sqO123}, that holds for any reflection group $W$ acting on $\RR^3$. We then return to the dihedral case and prove the main result of the section, Proposition~\ref{prop:ladderdihedral}, that exhibits a pair of ladder operators and the factorisations of their products. The section ends with a small discussion on the unitary structure considered.

\subsection{General symmetry algebra for 3D space}
We study elements of the algebra $\Clif(3)\otimes\alg{A}$ with general reflection group $W$ and $W$-invariant function $\kappa$. We begin by presenting elements, called \emph{symmetries}, that supercommute with the Dunkl--Dirac operator and the vector variable. They were defined, and their relations  studied, in~\cite{de_bie_algebra_2018}.

 The (positive) double covering $\dcover{W}$ of the reflection group $W$ is the first instance of such symmetries since its elements supercommute with $\DDop$ and $\xun$~\cite{de_bie_algebra_2018}. By viewing $W$ as a subgroup of the orthogonal group $\mathcal{O}(3)$, we obtain $\dcover{W}$ as the pullback of the projection of the double covering $\Pin_+(3)$ onto $\mathcal{O}(3)$. In the realisation $\dcover{W}\subset \Clif(3)\otimes \mathcal{A}$, its generators are obtained as
 \begin{equation}
 \dsig{r}:= \sum_{j=1}^3(\bili{r}{\xi_j}e_j) \otimes \sigma_r.
 \end{equation}
 	Alternatively, a definition in terms of abstract generators and relations is available in Appendix~\ref{app:doublecover}. The commuting element $z$ in the abstract presentation is realised as $-1$ in $\Clif(3)\otimes A$, thus the group algebra in our realisation is in fact a quotient of the abstract group algebra by $z=-1$.

We continue with three types of symmetries linked to polynomial expressions in Clifford variables. The \emph{one-index symmetries} have the following general expression:
 \begin{equation}\label{eq:Oigen}
 	\sym{i} := {1\over 2} (\comm{\DDop,x_i} - e_i) = {1\over 2}(\comm{\Dun{i},\xun} - e_i) = {1\over 2} \left(\sum_{k=1}^3e_kC_{ki}-e_i\right).
 \end{equation}

The one-index symmetries are included in the group algebra $\CC\dcover{W}$. They are however useful in order to simplify future expressions. If the root system is normalized, they can be rewritten in terms of the elements $\dsig{j}$ as~\cite[Ex. 4.2]{de_bie_algebra_2018}
\begin{equation}\label{eq:Oibydsig}
\sym{i} = \sum_{j=0}^m \kappa(\alpha_j) \bili{\xi_i}{\alpha_j}\dsig{j}.
\end{equation}

The following lemma shows how the Clifford elements interact with the one-index symmetries.
\begin{lemma}[{\cite[Lem.~3.10]{de_bie_algebra_2018}}]\label{lem:sym310}
	For any two indices $i$, $j$ the following relations hold
	\begin{equation}
	\acomm{e_i,\sym{j}} = \acomm{e_j,\sym{i}}.
	\end{equation}
\end{lemma}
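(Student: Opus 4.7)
The plan is simply to unfold the definition of $\sym{j}$ from~\eqref{eq:Oigen} and compute the anticommutator with $e_i$ directly. Writing
\[
\sym{j} = \tfrac12\Bigl(\sum_{k=1}^3 e_k C_{kj} - e_j\Bigr),
\]
the key point is that each $C_{k\ell}$ lies in $\alg{A}$ whereas the $e_k$ lie in $\Clif(3)$, so as factors of the tensor product $\Clif(3)\otimes \alg{A}$ they commute. Consequently
\[
\acomm{e_i, e_k C_{kj}} = \acomm{e_i, e_k}\, C_{kj} = 2\delta_{ik}\, C_{kj},
\]
using the Clifford relation $\acomm{e_i,e_k}=2\delta_{ik}$. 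Summing over $k$ and subtracting the contribution of $e_j$ gives
\[
\acomm{e_i,\sym{j}} = \tfrac12\Bigl(\sum_{k=1}^3 2\delta_{ik} C_{kj} - 2\delta_{ij}\Bigr) = C_{ij}-\delta_{ij}.
\]

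To conclude, I would invoke the symmetry $C_{ij}=C_{ji}$ already noted in the paper as a consequence of~\eqref{eq:commDx}. Performing the same computation with $i$ and $j$ exchanged yields $\acomm{e_j,\sym{i}} = C_{ji}-\delta_{ji} = C_{ij}-\delta_{ij}$, and the two sides agree. There is essentially no obstacle: the whole content of the lemma is the manifest symmetry of $C_{ij}$ under swapping its indices, together with the fact that the Clifford and rational Cherednik factors of the tensor product live in disjoint slots. The computation is valid for any reflection group $W$ acting on $\RR^N$, not only for $N=3$, and not just for $i\neq j$.
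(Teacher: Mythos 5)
Your proof is correct. Note that the paper itself does not prove this lemma at all: it is quoted verbatim from \cite[Lem.~3.10]{de_bie_algebra_2018}, so there is no internal argument to compare against. Your computation is the natural self-contained verification, and each step checks out: since $C_{kj}\in\alg{A}$ occupies the second tensor slot, it commutes with all Clifford generators (a fact the paper itself emphasises just after equation~\eqref{eq:O123v2}), so indeed
\begin{equation*}
\acomm{e_i,\sym{j}} = \tfrac12\Bigl(\textstyle\sum_{k} \acomm{e_i,e_k}\,C_{kj} - \acomm{e_i,e_j}\Bigr) = C_{ij}-\delta_{ij},
\end{equation*}
using $\acomm{e_i,e_k}=2\delta_{ik}$ (which holds with the paper's convention $e_j^2=+1$), and the conclusion follows from the symmetry $C_{ij}=C_{ji}$ noted right after \eqref{eq:commDx}. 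Two small remarks. First, your identity $\acomm{e_i,\sym{j}}=C_{ij}-\delta_{ij}$ is actually a sharper statement than the lemma: it exhibits the anticommutator as an explicit element of $\CC W$ plus a scalar, which is more information than the mere equality of the two anticommutators. Second, your observation that nothing restricts the argument to $N=3$ is accurate — the definition \eqref{eq:Oigen} and the symmetry of $C_{ij}$ hold for any reflection group on $\RR^N$, which is consistent with the fact that the cited source states the result in that generality.
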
 
The \emph{two-index symmetries} are defined below, with a second expression following Lemma~\ref{lem:sym310}
\begin{align}
\sym{ij}&:= L_{ij} + \tfrac{1}{2} e_ie_j + \sym{i}e_j - \sym{j}e_i, \label{eq:Oij1}\\
&= L_{ij} +\tfrac{1}{2} e_ie_j + e_i\sym{j} - e_j\sym{i}. \label{eq:Oij2}
\end{align}
Finally, the \emph{three-index symmetry} has also two equivalent expressions given by 
\begin{equation}\label{eq:O123ori}
\sym{123} := \tfrac{1}{2}(\comm{\DDop,\xun}-1)e_1e_2e_3 = -\tfrac12e_1e_2e_3(\comm{\DDop,\xun} - 1),
\end{equation}
but the following expansions will be more useful to work with
\begin{align}
\sym{123}&= -\tfrac{1}{2} e_1e_2e_3 - \sym{1}e_2e_3 - \sym{2}e_3e_1 - \sym{3}e_1e_2 + \sym{12}e_3 + \sym{31}e_2 + \sym{23}e_1, \label{eq:O123v1}\\
&= -\tfrac{1}{2}e_1e_2e_3 - e_2e_3 \sym{1} - e_3e_1 \sym{2} - e_1e_2 \sym{3} + e_3\sym{12} + e_2 \sym{31} + e_1\sym{23}.\label{eq:O123v2}
\end{align}
In \eqref{eq:O123ori}, the factor $\tfrac{1}{2}(\comm{\DDop,\xun} -1)$ is the Scasimir of $\osp(1|2)$~\cite{frappat_dictionary_2000}.

A word of warning: albeit the last equations make it look so, the Clifford elements do not in general commute or anticommute with the symmetries; only certain combinations of Clifford elements can commute following Lemma~\ref{lem:sym310}. However, the $\tot{ij}$'s and $C_{ij}$'s, being purely elements of $\alg{A}$, commute freely with Clifford elements.

We are ready to define the algebra studied in this paper. It is given as a subalgebra of $\Clif(3)\otimes \alg{A}$ generated by the elements presented above. The name of the algebra is derived from the fact that all its elements supercommute with the $\osp(1|2)$ realisation. 
\begin{definition}\label{def:symalg}
	The \emph{symmetry algebra} $\Asym$ is \modif{the associative subalgebra} of $\Clif(3)\otimes \alg{A}$ generated by the symmetries $\sym{12}$, $\sym{31}$, $\sym{23}$, $\sym{123}$ and the group $\dcover{W}$.
\end{definition}

We will give the commutation relations for all the generators only for the dihedral cases because a different set of generators will be better suited for the following results. The new set is given by equations~\eqref{eq:defTpm} and~\eqref{eq:defO0pm}, and the relations are found in Lemma~\ref{lem:commWO0} and Proposition~\ref{prop:commOdihedral}. In the general case, we only need to say that the three-index symmetry $\sym{123}$ commutes with every element of the symmetry algebra and state the commutation rules of the two-index symmetries. The other relations needed in the proof of Proposition~\ref{prop:sqO123} are retrieved implicitly from the definitions of the elements.

The following proposition shows clearly that the two-index symmetries relations are an extension of the Lie algebra $\Lie{so}(3)$ commutation rules \modif{as taking $\kappa$ to be the zero map would send the one-index symmetries to $0$.}

\begin{proposition}[\cite{de_bie_algebra_2018}]\label{prop:comm3Dsym}
	The two-index symmetries commutation rules are given by
	\begin{equation}\label{eq:commutationrules3d}
	\begin{aligned}
	\comm{\sym{12},\sym{31}} &= \sym{23} + \acomm{\sym{123},\sym{1}} + \comm{\sym{2},\sym{3}};\\
	\comm{\sym{23},\sym{12}} &= \sym{31} + \acomm{\sym{123},\sym{2}} + \comm{\sym{3},\sym{1}};\\
	\comm{\sym{31},\sym{23}} &= \sym{12} + \acomm{\sym{123},\sym{3}} + \comm{\sym{1},\sym{2}}.
	\end{aligned}
	\end{equation}	
\end{proposition}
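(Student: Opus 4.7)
The plan is a direct computation for, say, the first identity; the other two follow by cyclic permutation of the indices $(1,2,3)$. Write $\sym{ij}=L_{ij}+\tfrac12 e_ie_j+\sym{i}e_j-\sym{j}e_i$ and split the commutator into four pieces according to the four summands. Since the $L_{ij}$'s and $C_{ij}$'s commute with every Clifford generator, most of the cross terms either vanish or factor cleanly.

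The first main ingredient is the bracket $\comm{L_{12},L_{31}}$, which by equation~\eqref{eq:sumLLandLC} (or rather the first display of the quoted theorem) equals $L_{23}C_{11}+L_{31}C_{23}+L_{12}C_{33}+(\text{symmetric terms})$ — collecting with~\eqref{eq:Oigen}, the ``$L$'' part reconstructs $L_{23}$ plus pieces that will recombine with the Clifford and one-index contributions. The second ingredient is bracketing $\tfrac12 e_1e_2$ with $\tfrac12 e_3e_1$, which gives $\tfrac12 e_2e_3$, providing the $\tfrac12 e_2e_3$ term of $\sym{23}$. The cross terms $\comm{L_{12},\sym{3}e_1}$, $\comm{\sym{1}e_2,L_{31}}$, etc., are handled by pulling the Clifford letters past the $L$'s and using that $L_{ij}$ acts on one-index symmetries according to the identities that follow from~\cite[Prop.~3.1]{feigin_dunkl_2015}, in particular $\comm{L_{ij},C_{kl}}$ identities from the quoted theorem.

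The crucial structural identity to invoke is Lemma~\ref{lem:sym310}: whenever a term of the form $e_i\sym{j}$ appears with $i\neq j$, rewrite it as $-\sym{j}e_i+\acomm{e_i,\sym{j}}$ and use $\acomm{e_i,\sym{j}}=\acomm{e_j,\sym{i}}$ to pair unequal $(i,j)$ contributions into symmetric combinations. Doing so turns the brackets $\comm{\sym{1}e_2,\sym{3}e_1}$ etc. into a sum of $\comm{\sym{2},\sym{3}}$ plus Clifford-weighted anticommutators $\acomm{\sym{i},\sym{j}}$. Finally, to identify the anticommutator $\acomm{\sym{123},\sym{1}}$, I would substitute the expansion~\eqref{eq:O123v1} of $\sym{123}$: since $\sym{1}$ commutes with $\sym{23}e_1$-type pieces up to Clifford reordering, while it anticommutes (modulo Lemma~\ref{lem:sym310}) with the $e_2e_3$, $\sym{2}e_3e_1$, $\sym{3}e_1e_2$, $\sym{12}e_3$, $\sym{31}e_2$ terms, the anticommutator collapses to exactly the residual mixed terms left over from the previous steps.

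The genuine obstacle is not conceptual but organisational: one must keep track of roughly a dozen simultaneous non-commutations (Clifford with Clifford, $\sym{i}$ with $e_j$, $L_{ij}$ with $L_{kl}$, $\sym{i}$ with $\sym{j}$) and recognise that the leftover terms regroup into the three summands on the right-hand side. To make the bookkeeping manageable, I would first prove a preliminary lemma computing $\comm{\sym{ij},e_k}$ and $\comm{\sym{ij},\sym{k}}$ in closed form using Lemma~\ref{lem:sym310} and the $L$--$C$ identities, and only then assemble $\comm{\sym{12},\sym{31}}$. Once the first identity is established, cyclicity of the structure under the permutation $(1\,2\,3)$ yields the other two without further calculation.
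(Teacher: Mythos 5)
The paper itself contains no proof of this proposition: it is imported verbatim from \cite{de_bie_algebra_2018}, so there is no internal argument to compare against and your attempt must stand on its own. Your overall strategy (expand both factors via \eqref{eq:Oij1}--\eqref{eq:Oij2}, isolate the pure Clifford bracket, invoke the $L$--$L$ and $L$--$C$ identities, then regroup the deformation terms) is the natural one, and your pure-Clifford step is correct: $\comm{\tfrac12 e_1e_2,\tfrac12 e_3e_1}=\tfrac12 e_2e_3$. However, both places where you specify \emph{how} the remaining terms regroup contain errors, and these are exactly where the content of the proof lies.

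First, the $L$--$L$ rule is misapplied: with $(i,j,k,l)=(1,2,3,1)$ the quoted theorem gives
$\comm{L_{12},L_{31}} = L_{23}C_{11} + L_{31}C_{12} + L_{12}C_{31}$
(the term $L_{11}C_{23}$ vanishes); your terms $L_{31}C_{23}$ and $L_{12}C_{33}$ do not occur. Note also that $C_{11}\neq 1$ in the Dunkl setting, so $L_{23}C_{11}$ is $L_{23}$ plus deformation terms that must be cancelled against other contributions, not dropped. Second, the structural claims you use to identify $\acomm{\sym{123},\sym{1}}$ are false: a direct computation from \eqref{eq:Oigen} gives $\acomm{e_2e_3,\sym{1}} = e_1e_2e_3\,(C_{11}-1)$ and $\acomm{e_1,\sym{1}} = C_{11}-1$, so $\sym{1}$ neither anticommutes with $e_2e_3$ nor commutes with the ``$\sym{23}e_1$-type'' pieces, even ``modulo Lemma~\ref{lem:sym310}''; the only exact statement available is $\acomm{e_i,\sym{j}}=\acomm{e_j,\sym{i}}$, and the paper explicitly warns (just after \eqref{eq:O123v2}) that Clifford elements do not in general commute or anticommute with the symmetries. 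Consequently your final step --- that the anticommutator ``collapses to exactly the residual mixed terms'' --- is an assertion whose supporting mechanisms fail; showing that all the $C_{ij}$-laden cross terms assemble precisely into $\acomm{\sym{123},\sym{1}}+\comm{\sym{2},\sym{3}}$ is the real work, and it is missing. (Your closing remark is fine: once the first identity is proved, the other two do follow, since \eqref{eq:Oij1}, \eqref{eq:Oigen} and the expansion \eqref{eq:O123v1} of $\sym{123}$ are invariant under the cyclic permutation of $(1,2,3)$.)
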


It will be useful later on to have an expression for the square of $\sym{123}$. \modif{Indeed, as $\sym{123}$ is the product of the Scasimir of $\osp(1|2)$ with the pseudo-scalar $e_1e_2e_3$ (see equation~\eqref{eq:O123ori}), its square is the Casimir of $\osp(1|2)$. Proposition~\ref{prop:sqO123} expresses $\sym{123}^2$ as a sum of the squares of the other symmetries (considering a trivial symmetry $\sym{\emptyset} = i/2$). This sum is thus central and furthermore, when $\kappa$ is set to $0$, it reduces to the Casimir of the undeformed $\Lie{so}(3)$ algebra. } We emphasize that this result does not assume anything on $W$ outside it acting on a three-dimensional space.

\begin{proposition}\label{prop:sqO123}
	For any reflection group acting on $\RR^3$, the three-index symmetry $\sym{123}$ squares to
	\begin{equation}\label{eq:sqO123}
	\sym{123}^2 = -{1\over 4} + \sym{1}^2 + \sym{2}^2 + \sym{3}^2 + \sym{12}^2 + \sym{31}^2 + \sym{23}^2.
	\end{equation}
\end{proposition}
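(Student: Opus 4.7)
My plan is to compute $\sym{123}^2$ by directly multiplying the two equivalent expansions of $\sym{123}$ given in~\eqref{eq:O123v1} and~\eqref{eq:O123v2}. I would take~\eqref{eq:O123v1} as the left factor and~\eqref{eq:O123v2} as the right factor, so that in each of the resulting $49$ product terms the Clifford generators sit adjacent at the interface and can be collapsed first.

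The seven \emph{diagonal} products (where the same numbered summand appears on both sides) simplify immediately using $(e_i)^2 = 1$, $(e_ie_j)^2 = -1$, and $(e_1e_2e_3)^2 = -1$. A short calculation shows they contribute
\begin{equation*}
-\tfrac{1}{4} - (\sym{1}^2 + \sym{2}^2 + \sym{3}^2) + (\sym{12}^2 + \sym{31}^2 + \sym{23}^2).
\end{equation*}
Comparing with the target~\eqref{eq:sqO123}, the task reduces to showing that the remaining $42$ off-diagonal products sum to $+2(\sym{1}^2 + \sym{2}^2 + \sym{3}^2)$.

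For this second step, I would pair each off-diagonal product with its transposed partner, so that each pair yields an anticommutator of two symmetries weighted by a Clifford monomial. I would apply Lemma~\ref{lem:sym310} repeatedly to reorder Clifford generators past the one-index symmetries, and use identity~\eqref{eq:sumLLandLC} from the quoted theorem to reduce the triple products $L_{ij}L_{kl}$ that arise whenever two $\sym{ij}$-type terms are crossed. As a running sanity check, equation~\eqref{eq:O123ori} implies that the Scasimir $S=\tfrac{1}{2}(\comm{\DDop,\xun}-1)$ satisfies $S\omega = -\omega S$ with $\omega = e_1e_2e_3$; hence $\sym{123}^2 = S\omega \cdot S\omega = -S^2\omega^2 = S^2$ necessarily lies in the even part of $\Clif(3)$, so every Clifford monomial of grade $1$ or $3$ that appears in the off-diagonal sum must cancel.

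The hard part will be pure bookkeeping. Because the statement is asserted for an \emph{arbitrary} reflection group acting on $\RR^3$, no group-specific simplification is available, and only the generic identities from the quoted theorem together with Lemma~\ref{lem:sym310} and the explicit form~\eqref{eq:Oigen} of $\sym{i}$ can be invoked. The delicate step will be showing that, after all odd-graded Clifford pieces cancel, the surviving scalar and bivector contributions reassemble, via~\eqref{eq:Oigen} and the cyclic identities of the quoted theorem, into exactly $2(\sym{1}^2 + \sym{2}^2 + \sym{3}^2)$ rather than some other central element of $\Asym$.
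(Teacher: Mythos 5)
Your plan is correct and is essentially the paper's own proof: the paper likewise multiplies~\eqref{eq:O123v1} by~\eqref{eq:O123v2} so that the Clifford generators collapse at the interface, splits the $49$ terms into $7$ diagonal terms (yielding exactly the contribution $-\tfrac14 - \sym{1}^2-\sym{2}^2-\sym{3}^2+\sym{12}^2+\sym{31}^2+\sym{23}^2$ you state) and $42$ cross terms, and then shows the cross terms equal $2(\sym{1}^2+\sym{2}^2+\sym{3}^2)$ by substituting the two forms~\eqref{eq:Oij1} and~\eqref{eq:Oij2} of the two-index symmetries and the expression~\eqref{eq:Oigen} of the one-index symmetries, with the leftover combinations of $L_{ij}$ and $C_{ij}$ cancelling precisely by~\eqref{eq:sumLLandLC}. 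Your transposed-pair bookkeeping and the even-grading sanity check via the Scasimir are only cosmetic variations on the same argument.
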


\begin{proof}
	The first step in the proof is to use the two expressions~\eqref{eq:O123v1} and~\eqref{eq:O123v2} for $\sym{123}$ to put the Clifford elements in the middle:
	\begin{align*}
	\sym{123}^2 &= \left(\tfrac{1}{2} e_1e_2e_3 - \sym{1}e_2e_3 - \sym{2}e_3e_1 - \sym{3}e_2e_3 + \sym{12}e_3 + \sym{31}e_2 + \sym{23}e_1\right)\\
	&\quad \times \left(\tfrac{1}{2}e_1e_2e_3 - e_2e_3 \sym{1} - e_3e_1 \sym{2} - e_1e_2 \sym{3} + e_3\sym{12} + e_2 \sym{31} + e_1\sym{23}\right).
	\end{align*}
	In working out the 49 terms of this product, separate the 7 ``diagonal terms'' and the 42 ``cross terms''. Simplifying with the  Clifford anticommutation relations gives
	\begin{align}
		\sym{123}^2 &= -{1\over 4} - \sym{1}^2 - \sym{2}^2 - \sym{3}^2 + \sym{12}^2 + \sym{31}^2 + \sym{23}^2 + Q,
	\end{align}
    with $Q$ consisting of the 42 cross terms, all of them with symmetries shouldering Clifford elements. The proof is completed once it is shown that $Q$ reduces to $2(\sym{1}^2+\sym{2}^2+\sym{3}^2)$.
   
    For this purpose, replace the two-index symmetries at the left of the central Clifford elements by their definition with Clifford elements on the right, equation~\eqref{eq:Oij1}; and replace the two-index symmetries at the right of the Clifford elements by their definition with Clifford elements on the left, equation~\eqref{eq:Oij2}. After simplifications, this will give
\begin{align}
	Q &=  2( \sym{1}^2 + \sym{2}^2 + \sym{3}^2)\\
	&\quad+ {1\over 2}\left( \begin{array}{l}
	\tot{12}e_1e_2 (1-e_3e_1\tot{31} - e_2e_3\tot{23}  + 2e_3\sym{3})\\
	+\ \tot{31}e_3e_1 (1-e_1e_2\tot{12}-e_2e_3\tot{23}  + 2e_2\sym{2})\\
	+\ \tot{23}e_2e_3(1-e_1e_2\tot{12} - e_3e_1\tot{31}  + 2e_1\sym{1})
	\end{array}\right)\label{eq:sqO123B2}\\
	&\quad+ \frac12 \left( \begin{array}{l}
	(1-\tot{31}e_3e_1 - \tot{23}e_2e_3 + 2\sym{3}e_3)e_1e_2\tot{12}\\
	+\ (1-\tot{12}e_1e_2 - \tot{23}e_2e_3 + 2\sym{2}e_2)e_3e_1\tot{31}\\
	+\ (1-\tot{12}e_1e_2 - \tot{31} e_3e_1 + 2\sym{1}e_1)e_2e_3\tot{23}
	\end{array}\right). \label{eq:sqO123B3}
	\end{align}

 	The final step is to show that each of the two last components~\eqref{eq:sqO123B2} and~\eqref{eq:sqO123B3} are $0$. In each of them, replace the one-index symmetries by their expression~\eqref{eq:Oigen} in terms of Clifford elements and $C_{ij}$. As the $\tot{ij}$'s and $C_{ij}$'s commute with Clifford elements, factor out the Clifford elements. The coefficient of the Clifford variables will be sums of $\tot{ij}$ and $C_{ij}$. They will cancel out by equation~\eqref{eq:sumLLandLC}.
    This gives the appropriate expression for $Q$, proving the lemma.
\end{proof}

\subsection{Dihedral 3D symmetry algebra and ladder operators}

Let $m\geq 2$. We now specify to $W = \ZZ_2\times D_{2m}$. We denote the symmetry algebra linked to this $W$ by $\SymAlg$ as opposed to the general symmetry algebra $\Asym$. We begin by giving the explicit expressions of the elements of $\dcover{W}$: 
\begin{align}
\dsig{0} &= e_3\sigma_0, & \dsig{j} = (\sin(j\pi/m)e_1 - \cos(j\pi/m)e_2 )\sigma_j.
\end{align}
They generate the group $\dcover{W}\subset \Clif(3)\otimes \CC[W]$ with presentation given by
\begin{equation}
	\dcover{W} = \left\langle \dsig{0},\, \dsig{1},\, \dsig{m}\ \middle| \ \dsig{0}^2 = \dsig{1}^2=\dsig{m}^2 = 1, \, (\dsig{0}\dsig{1})^2=(\dsig{0}\dsig{m})^2 = -1, \ (\dsig{1}\dsig{m})^m = (-1)^{m+1}\right\rangle.
\end{equation}
Note that because the group is realised in $\Clif(3)\otimes \CC[W]$, there is no need to add $-1$ as a generator. It is the positive double covering of $W$ (or the negative double covering if the Clifford elements square to $-1$ instead of $+1$, see Corollary~\ref{coro:doublecoverings}). Appendix~\ref{app:doublecover} studies the double coverings abstractly. A few things can immediately be said of $\dcover{W}$ nonetheless. The group depends on the parity of $m$, and the generator $\dsig{0}$ of the $\ZZ_2$ part \emph{anti}-commutes with $\dsig{1}$ and $\dsig{m}$ whereas $\sigma_0$ commutes with $\sigma_1$ and $\sigma_m$; so $\dcover{W} \not\simeq W$ in general. This is a difference with the case $W=S_3$ as $S_3$ does not have a non-trivial positive double covering~\cite{Schur1907} (it can also readily be  seen from Corollary~\ref{coro:doublecoverings}), however $S_3\times \ZZ_2$ does. 

The remainder of the section is dedicated to finding ladder operators, which will be crucial for the construction of representations of $\SymAlg$. Inspired by the construction of ladder operators in the $\Lie{so}(3)$ case, we define
\begin{align}\label{eq:defO0pm}
\sym{0} &:= -i \sym{12}, & \sym{+} &:= i\sym{31} +  \sym{23}, & \sym{-} &:= i\sym{31} - \sym{23}.
\end{align}
The algebra $\SymAlg$ is also generated by $O_0$, $\sym+$, $\sym-$, $\sym{123}$ and $\dcover{W}$.

Define the following combinations of elements of the group algebra of $\dcover{W}$
\begin{align}\label{eq:defTpm}
T_0 := i\kappa_0 \dsig{0}, & &T_+ &:= -i\sum_{j=1}^m \kappa(\alpha_j) e^{j\pi i/m} \dsig{j}, & T_- := i\sum_{j=1}^m\kappa(\alpha_j) e^{-j\pi  i/m} \dsig{j}.
\end{align}
They are linked with the one-index symmetries by equation~\eqref{eq:Oibydsig}:
\begin{align}
T_0&= i\sym{3},& T_+ &= \sym1 + i \sym2, & T_- &= \sym1 - i\sym{2}, & \comm{T_+,T_-} &= -2i\comm{\sym{1},\sym{2}}.
\end{align}

Furthermore, they can be expressed in another form according to the parity of $m$. Put $\zeta := e^{\pi i /m}$. When $m$ is \textbf{odd}, then all the $\kappa(\alpha_j)$ are the same and so
\begin{equation}
T_+ = -i \kappa_1 \sum_{j=1}^m \zeta^j \dsig{j}, \qquad T_- =  i \kappa_1 \sum_{j=1}^m \zeta^{-j} \dsig{j},
\end{equation}
and when $m=2p$ is \textbf{even} then it is expressed as
\begin{equation}\label{eq:Tpmeven}
T_{+} = - i(\kappa_1 T_{+}^1 + \kappa_m T_{+}^2), \qquad T_{-} = + i(\kappa_1 T_{-}^1 + \kappa_m T_{-}^2),
\end{equation}
with $T_{\pm}^{1}$, and $T_{\pm}^2$ the sums over odd and even indices:
\begin{equation}
\begin{aligned}
T_+^1 &= \sum_{j=1}^p \zeta^{2j-1} \dsig{2j-1}, & T_+^2 &= \sum_{j=1}^p\zeta^{2j} \dsig{2j},&
T_-^1 &= \sum_{j=1}^p \zeta^{1-2j} \dsig{2j-1}, & T_-^2 &= \sum_{j=1}^p\zeta^{-2j} \dsig{2j}.
\end{aligned}
\end{equation}

The next lemma gives useful commutation properties between the new generators.
\begin{lemma}\label{lem:commWO0}
	The element $\sym{0}$ has the following commutation relations with elements of $\CC[\dcover{W}]$: 
	\begin{equation}\label{eq:commWO0O0}
	\comm{\sym{0},T_0} =\acomm{\sym{0},T_+} = \acomm{\sym{0},T_-} =  0.
	\end{equation}
	Furthermore, $T_0$, $T_+$ and $T_-$ interact with $\sym{-}$ and $\sym{+}$ as follows:
	\begin{align}
	T_0\sym{+} &= - \sym{+}T_0, & T_0\sym{-} &= -\sym{-}T_0,\label{eq:T0andO}\\
	T_0T_+ &= -T_+T_0, & T_0 T_- &= -T_-T_0,\label{eq:T0andT}\\
	T_+\sym{-} &= -\sym{+}T_-, & T_-\sym{+} &= -\sym{-}T_+.\label{eq:TandO}
	\end{align}
\end{lemma}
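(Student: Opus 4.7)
The plan is to exploit the $\dcover{W}$-equivariance of the symmetries under conjugation, together with the explicit realisations $\dsig{0}=e_{3}\sigma_{0}$ and $\dsig{j}=v_{j}\sigma_{j}$ with $v_{j}=\sin(j\pi/m)e_{1}-\cos(j\pi/m)e_{2}$. Throughout, we use that $T_{0}=i\sym{3}$, $T_{\pm}=\sym{1}\pm i\sym{2}$, that $\sym{0}=-i\sym{12}$, $\sym{\pm}=i\sym{31}\pm\sym{23}$, and that $T_{+}$ and $T_{-}$ are explicit linear combinations of $\dsig{j}$ with $j\ge 1$ (by definition \eqref{eq:defTpm}).

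For the relations in \eqref{eq:commWO0O0}, I would first determine how $\dsig{k}$ acts on $\sym{12}$ by conjugation, viewing $\sym{12}$ as an antisymmetric $2$-tensor living in the $(x_{1},x_{2})$-plane. Because $\sigma_{0}$ fixes that plane, we get $\dsig{0}\sym{12}\dsig{0}^{-1}=\sym{12}$, giving $[\sym{0},T_{0}]=0$. For $j\ge 1$, the reflection $\sigma_{j}$ lies inside the $(x_{1},x_{2})$-plane and reverses its orientation, so conjugation by $\dsig{j}$ sends $\sym{12}\mapsto-\sym{12}$. Since $T_{\pm}$ is a linear combination of such $\dsig{j}$, this yields $\{\sym{0},T_{\pm}\}=0$. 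Relations \eqref{eq:T0andO} follow similarly by noting that $\sigma_{0}$ flips the $x_{3}$-axis, so both $\sym{23}$ and $\sym{31}$ (which involve the third coordinate) pick up a minus sign under conjugation by $\dsig{0}$, hence $\{T_{0},\sym{\pm}\}=0$. The relations \eqref{eq:T0andT} are the most direct: a short Clifford computation shows $\dsig{0}\dsig{j}=-\dsig{j}\dsig{0}$ for $j\ge 1$ because $e_{3}v_{j}=-v_{j}e_{3}$ (while the group-algebra factors $\sigma_{0}$ and $\sigma_{j}$ commute).

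The substantive part of the proof is \eqref{eq:TandO}. Here I would compute the $2\times 2$ block of $\sigma_{j}$ acting on the plane spanned by $\sym{23},\sym{31}$, giving
\begin{align*}
\dsig{j}\sym{31}\dsig{j}^{-1}&=\cos(2j\pi/m)\sym{31}-\sin(2j\pi/m)\sym{23},\\
\dsig{j}\sym{23}\dsig{j}^{-1}&=-\sin(2j\pi/m)\sym{31}-\cos(2j\pi/m)\sym{23}.
\end{align*}
Assembling these into $\sym{\pm}=i\sym{31}\pm\sym{23}$ yields the clean formula $\dsig{j}\sym{\pm}\dsig{j}^{-1}=\zeta^{\pm 2j}\sym{\mp}$, so $\dsig{j}\sym{\pm}=\zeta^{\pm 2j}\sym{\mp}\dsig{j}$. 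Plugging this into $T_{+}=-i\sum_{j}\kappa(\alpha_{j})\zeta^{j}\dsig{j}$ one computes
\[
T_{+}\sym{-}=-i\sum_{j}\kappa(\alpha_{j})\zeta^{j}\zeta^{-2j}\sym{+}\dsig{j}=-\sym{+}\Bigl(i\sum_{j}\kappa(\alpha_{j})\zeta^{-j}\dsig{j}\Bigr)=-\sym{+}T_{-},
\]
and the partner identity $T_{-}\sym{+}=-\sym{-}T_{+}$ follows by swapping the roles of $+$ and $-$.

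The main obstacle I anticipate is justifying the transformation law for the two-index symmetries $\sym{ij}$ under $\dcover{W}$-conjugation: the expression $\sym{ij}=L_{ij}+\tfrac12 e_{i}e_{j}+\sym{i}e_{j}-\sym{j}e_{i}$ mixes a pure algebra piece, a pure bivector, and a cross term involving one-index symmetries, and one must check that all three parts transform consistently as a rank-$2$ antisymmetric tensor. This equivariance is in fact the content of the general framework of \cite{de_bie_algebra_2018} (reflecting the fact that the $\sym{ij}$ generate a deformed $\Lie{so}(3)$-type structure covariant under $\dcover{W}$), and once invoked the lemma reduces to the direct computations outlined above.
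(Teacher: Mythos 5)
Your proof is correct, and its overall skeleton is the same as the paper's: establish how $\dsig{0}$ and $\dsig{j}$ intertwine with $\sym{0}$, $\sym{+}$, $\sym{-}$ (in particular the key relation $\dsig{j}\sym{\pm} = \zeta^{\pm 2j}\sym{\mp}\dsig{j}$), then assemble $T_0$, $T_{\pm}$ by linearity — your final display for $T_+\sym{-} = -\sym{+}T_-$ is essentially verbatim the paper's computation. The difference lies in how the intertwining relations are justified. The paper does it by hands-on expansion: it writes $\sym{12}$ and $\dsig{j}$ out in terms of $\tot{12}$, one-index symmetries and Clifford elements, computes $\sigma_j\tot{12} = -\tot{12}\sigma_j$ explicitly from the matrix action on the $x_i$ and $\Dun{i}$, and treats the Clifford and one-index pieces separately; the relation $\dsig{j}\sym{\pm} = \zeta^{\pm2j}\sym{\mp}\dsig{j}$ is then asserted "by direct computations." You instead derive everything from the claim that the $\sym{ij}$ transform as an antisymmetric $2$-tensor under conjugation by $\dcover{W}$, which gives a cleaner and more uniform derivation of exactly the relations the paper computes piecemeal. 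That equivariance claim, which you rightly flag as the one point needing justification, does hold and is routine to check from the definitions: conjugation of a Clifford vector by $\dsig{r}$ implements $-\sigma_r$, and the one-index symmetries likewise satisfy $\dsig{r}\sym{i}\dsig{r}^{-1} = -\sum_k(\sigma_r)_{ki}\sym{k}$, so in each of the three pieces of $\sym{ij} = \tot{ij} + \tfrac12 e_ie_j + \sym{i}e_j - \sym{j}e_i$ the minus signs occur in pairs and cancel, leaving the untwisted tensorial transformation you use. So your appeal to the framework of the cited reference is sound, though for self-containedness one would want to record this two-line sign-cancellation argument rather than leave it as a citation.
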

\begin{proof}
	Equations~\eqref{eq:commWO0O0} are directly equivalent to $\comm{\sym{12}, \dsig0} =0$ and $\acomm{\sym{12},\dsig{j}} = 0$ for $1\leq j \leq m$. For the first, $\comm{\sym{0},T_0} =0$, it follows from the action of $\sigma_0$ that $\dcover{\sigma}_0$, and so $T_0$, will commute with $\sym{12}$. For the two others, $\acomm{\sym{0},T_+}=0 = \acomm{\sym{0},T_-}$, expand $\dsig{j}$ and $\sym{12}$ by their definition to obtain a product of Clifford elements with one-index symmetries and $\tot{12}$. We give the computations for $\sigma_{j}\tot{12}$,
	\begin{align*}
		\sigma_j\tot{12} &= \sigma_j(x_1\Dun{2} - x_2\Dun{1})  \\
		&= \left(\cos(2\pi j/m) x_1 + \sin(2\pi j/m)x_2\right)\left(\sin(2\pi j/m)\Dun{1} - \cos(2\pi j/m)\Dun{2}\right)\sigma_j\\
		&\quad - \left(\sin(2\pi j/m) x_1 - \cos(2\pi j/m)x_2\right)\left(\cos(2\pi j/m)\Dun{1} + \sin(2\pi j/m)\Dun{2}\right)\sigma_j\\
		&= (\sin^2(2\pi j/m) + \cos^2(2\pi j/m)) \tot{21}\sigma_j = -\tot{12}\sigma_j.
	\end{align*}
    The computations on the one-index symmetries follow from their definition~\eqref{eq:Oigen} and the Clifford part is direct, so $\acomm{\sym{12},\dsig{j}} =0$.
	
	For the next expressions~\eqref{eq:T0andO},~\eqref{eq:T0andT} and~\eqref{eq:TandO}, remark that $\sym{3}$ leaves $\sym{12}$ invariant and  sends $\sym{31}$ and $\sym{23}$ to $-\sym{31}$ and $-\sym{23}$ respectively, so $\sym{3}\sym{+} = -\sym{+}\sym{3}$ and $\sym{3}\sym{-} = -\sym{-}\sym{3}$. This proves equations~\eqref{eq:T0andO}. Additionally, $\dsig{0}$ and $\dsig{j}$ anticommute because $e_3$ anticommutes with $e_1$ and $e_2$, and because $\sigma_{0}$ commutes with $\sigma_{j}$. Therefore, $\acomm{\sym{3},\sym{1}} = 0 = \acomm{\sym{3},\sym{2}}$, and so, $\acomm{\sym{3},T_{\pm}} = 0$; proving equations~\eqref{eq:T0andT}.
	
	Finally, equations~\eqref{eq:TandO} are proven from the expression~\eqref{eq:defTpm}. By direct computations, we have that	$\dsig{j}\sym{\pm} = e^{\pm 2j\pi i/m}\sym{\mp}\dsig{j}$ and therefore
	\begin{align*}
	T_+\sym{-} &= -i\sum_{j=1}^m \kappa(\alpha_j)e^{\tfrac{j\pi i}{m}}\dsig{j} \sym{-} 
	= -i\sum_{j=1}^m \kappa(\alpha_j) e^{\tfrac{j\pi i}{m}}e^{\tfrac{-2j\pi i}{m} }\sym{+}\dsig{j}
	= -\sym{+}\left(i\sum_{j=1}^m \kappa(\alpha_j) e^{\tfrac{-j\pi i}{m}}\dsig{j}\right) 
	= -\sym{+}T_-,
	\intertext{and}
	T_-\sym{+} &= i\sum_{j=1}^m \kappa(\alpha_j)e^{\tfrac{-j\pi i}{m}}\dsig{j} \sym{+}
	= i\sum_{j=1}^m \kappa(\alpha_j) e^{\tfrac{-j\pi i}{m}}e^{\tfrac{2j\pi i}{m} }\sym{-}\dsig{j}
	= -\sym{-}T_+.
	\end{align*}
	This concludes the proof.
\end{proof}

With the new set of generators, Proposition~\ref{prop:comm3Dsym} translates to the following. 
\begin{proposition}\label{prop:commOdihedral}
	The linear combinations $\sym{0}$, $\sym{+}$ and $\sym{-}$ satisfy
	\begin{equation} \label{eq:commrelO0OpOm}
	\begin{aligned}
	\comm{\sym{0},\sym{+}} & = + \sym{+} + \acomm{\sym{123},T_+} + \comm{T_0,T_+};\\
	\comm{\sym{0},\sym{-}} &= - \sym{-} + \acomm{\sym{123},T_-} - \comm{T_0,T_-};\\
	\comm{\sym{+},\sym{-}} &= 2\sym{0} - 2\acomm{\sym{123},T_0} + \comm{T_+,T_-}.
	\end{aligned}
	\end{equation}
\end{proposition}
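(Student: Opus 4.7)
The plan is to derive each of the three identities in~\eqref{eq:commrelO0OpOm} as a direct linear consequence of Proposition~\ref{prop:comm3Dsym}, by expanding the commutators of $\sym{0},\sym{+},\sym{-}$ through the definitions~\eqref{eq:defO0pm}, substituting the three known relations for $\comm{\sym{12},\sym{31}}$, $\comm{\sym{23},\sym{12}}$ and $\comm{\sym{31},\sym{23}}$, and then repackaging the resulting combinations of $\sym{12},\sym{31},\sym{23}$ and $\sym{1},\sym{2},\sym{3}$ into the new basis $\sym{0},\sym{\pm}$ and $T_0 = i\sym{3}$, $T_{\pm} = \sym{1}\pm i\sym{2}$.

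For the first identity, I would compute
\begin{equation*}
\comm{\sym{0},\sym{+}} = \comm{-i\sym{12},\,i\sym{31}+\sym{23}} = \comm{\sym{12},\sym{31}} + i\comm{\sym{23},\sym{12}},
\end{equation*}
using bilinearity and the antisymmetry $\comm{\sym{12},\sym{23}} = -\comm{\sym{23},\sym{12}}$. Substituting the first two lines of Proposition~\ref{prop:comm3Dsym} and collecting like pieces yields $\sym{23}+i\sym{31} = \sym{+}$, then $\acomm{\sym{123},\sym{1}} + i\acomm{\sym{123},\sym{2}} = \acomm{\sym{123},T_+}$, and finally $\comm{\sym{2},\sym{3}} + i\comm{\sym{3},\sym{1}} = i\comm{\sym{3},T_+} = \comm{T_0,T_+}$ after using $\sym{3} = -iT_0$. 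The second identity is obtained in the same way, starting from $\comm{-i\sym{12},\,i\sym{31}-\sym{23}}$; the only difference is a sign flip in the $\sym{23}$-contribution, which propagates to give $-\sym{-}$, $\acomm{\sym{123},T_-}$ and $-\comm{T_0,T_-}$.

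For the third identity, the orthogonality structure of the $\{i\sym{31},\sym{23}\}$ pair makes the expansion especially clean:
\begin{equation*}
\comm{\sym{+},\sym{-}} = \comm{i\sym{31}+\sym{23},\,i\sym{31}-\sym{23}} = -2i\,\comm{\sym{31},\sym{23}}.
\end{equation*}
Applying the third line of Proposition~\ref{prop:comm3Dsym} and converting each summand gives $-2i\sym{12} = 2\sym{0}$, $-2i\acomm{\sym{123},\sym{3}} = -2\acomm{\sym{123},T_0}$, and $-2i\comm{\sym{1},\sym{2}} = \comm{T_+,T_-}$, where the last equality is exactly the relation noted just before Lemma~\ref{lem:commWO0}.

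The computation presents no real conceptual obstacle; the only point requiring care is the bookkeeping of the factors of $i$ and the signs incurred when trading the real basis $\{\sym{12},\sym{31},\sym{23}\}$ for the complex basis $\{\sym{0},\sym{+},\sym{-}\}$, and likewise $\{\sym{1},\sym{2},\sym{3}\}$ for $\{T_0,T_+,T_-\}$, together with systematic use of commutator antisymmetry. Once these substitutions are carried out, the three relations fall out immediately.
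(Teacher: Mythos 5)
Your proposal is correct and follows essentially the same route as the paper's proof: expand the commutators of $\sym{0},\sym{\pm}$ via the definitions~\eqref{eq:defO0pm}, substitute the relations of Proposition~\ref{prop:comm3Dsym}, and regroup into $\sym{\pm}$, $T_0$, $T_{\pm}$ (the paper carries out only the first identity explicitly and states the other two are similar, whereas you verify all three, including the clean reduction $\comm{\sym{+},\sym{-}} = -2i\comm{\sym{31},\sym{23}}$). All sign and factor-of-$i$ bookkeeping in your computation checks out.
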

\begin{proof}
	It follows from the commutation rules \eqref{eq:commutationrules3d}  and the definitions of $T_0$, $T_+$ and $T_-$ that:
	\begin{align*}
	\comm{\sym{0},\sym{+}} &= \comm{-i\sym{12},i\sym{31} + \sym{23}}\\
	&= \comm{\sym{12},\sym{31}} - i\comm{\sym{12},\sym{23}}\\
	&= \sym{23} + \acomm{\sym{123},\sym{1}} + \comm{\sym{2}, \sym{3}} + i(\sym{31} + \acomm{\sym{123},\sym{2}} + \comm{\sym{3},\sym{1}})\\
	&= \sym{+}  + \acomm{\sym{123},T_+} + i\comm{\sym{3},T_+},	
	\end{align*}
	and similarly for the two other equations.
\end{proof}

As a corollary of Proposition~\ref{prop:sqO123}, the products $\sym{+}\sym{-}$ and $\sym{-}\sym{+}$  have new expressions.
\begin{corollary}\label{coro:sqO123}
	The products $\sym{+}\sym{-}$ and $\sym{-}\sym{+}$ can be expressed as	
	\begin{align}
	\sym{+}\sym{-} &= T_+T_- - (\sym{0} - 1/2)^2 - (\sym{123} + T_0)^2, \\
	\sym{-}\sym{+} &= T_-T_+ - (\sym{0} + 1/2)^2 - (\sym{123} - T_0)^2.
	\end{align}
\end{corollary}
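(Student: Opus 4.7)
The plan is to directly combine Proposition~\ref{prop:sqO123} with the commutation relations of Proposition~\ref{prop:comm3Dsym} after expanding $\sym{+}\sym{-}$ and $\sym{-}\sym{+}$ in the original two-index symmetry generators. First I would compute
\begin{equation*}
\sym{+}\sym{-} = (i\sym{31}+\sym{23})(i\sym{31}-\sym{23}) = -\sym{31}^2 - \sym{23}^2 + i\comm{\sym{23},\sym{31}},
\end{equation*}
and analogously $\sym{-}\sym{+} = -\sym{31}^2 - \sym{23}^2 + i\comm{\sym{31},\sym{23}}$. For the square terms, Proposition~\ref{prop:sqO123} rewrites
\begin{equation*}
-\sym{31}^2 - \sym{23}^2 = -\sym{123}^2 - \tfrac14 + \sym{1}^2 + \sym{2}^2 + \sym{3}^2 + \sym{12}^2,
\end{equation*}
and for the commutator I would invoke the third relation of~\eqref{eq:commutationrules3d}, namely $\comm{\sym{31},\sym{23}} = \sym{12} + \acomm{\sym{123},\sym{3}} + \comm{\sym{1},\sym{2}}$.

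Next I would translate everything into the new generators via the defining substitutions $\sym{12} = i\sym{0}$, $\sym{3} = -iT_0$, $\sym{1}+i\sym{2} = T_+$, $\sym{1}-i\sym{2} = T_-$. These give $\sym{12}^2 = -\sym{0}^2$, $\sym{3}^2 = -T_0^2$, and
\begin{equation*}
\sym{1}^2 + \sym{2}^2 = T_+T_- + i\comm{\sym{1},\sym{2}} = T_-T_+ - i\comm{\sym{1},\sym{2}}.
\end{equation*}
The only nontrivial point is that $\sym{123}$ is central in $\SymAlg$ (stated just before Proposition~\ref{prop:comm3Dsym}), so $\comm{\sym{123},T_0}=0$, which in turn lets me identify $\acomm{\sym{123},\sym{3}} = -i\acomm{\sym{123},T_0} = -2iT_0\sym{123}$ and rewrite $-\sym{123}^2 \mp 2T_0\sym{123} - T_0^2$ as $-(\sym{123} \pm T_0)^2$.

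Combining these substitutions, for $\sym{+}\sym{-}$ I gather $\sym{12}^2 - i\sym{12} = -\sym{0}^2 + \sym{0} = -(\sym{0}-\tfrac12)^2 + \tfrac14$ and $-\sym{123}^2 - i\acomm{\sym{123},\sym{3}} - T_0^2 = -(\sym{123}+T_0)^2$, which together with the constant $-\tfrac14$ cancellation yields the first identity. For $\sym{-}\sym{+}$ the sign of the commutator flips, producing $\sym{12}^2 + i\sym{12} = -(\sym{0}+\tfrac12)^2 + \tfrac14$ and $-(\sym{123}-T_0)^2$, giving the second identity. The computation is essentially bookkeeping; the only step requiring care is keeping the signs straight and using centrality of $\sym{123}$ to complete the squares $(\sym{123}\pm T_0)^2$.
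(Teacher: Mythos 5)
Your proposal is correct and follows essentially the same route as the paper: both expand $\sym{\pm}\sym{\mp}$ in the two-index symmetries, substitute the sum of squares from Proposition~\ref{prop:sqO123} together with the commutator $\comm{\sym{31},\sym{23}}$ from~\eqref{eq:commutationrules3d}, translate to the generators $\sym{0}$, $T_0$, $T_{\pm}$, and complete the squares using the centrality of $\sym{123}$. The only difference is bookkeeping order (the paper solves for $\sym{31}^2+\sym{23}^2$ and re-inserts it into the expression for $\sym{123}^2$, whereas you expand $\sym{+}\sym{-}$ directly), which is not a substantive distinction.
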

\begin{proof}
	The formulas are obtained from changing the variables in the expression of $\sym{123}^2$ of Proposition~\ref{prop:sqO123}.  First note that $\sym{0}^2 = -\sym{12}^2$. Then compute
	\begin{align*}
	\sym{+}\sym{-} &= (i\sym{31} + \sym{23})(i\sym{31} - \sym{23})
	= -\sym{31}^2 - \sym{23}^2 - i \comm{\sym{31} , \sym{23}},
	\intertext{and thus}
	\sym{31}^2 + \sym{23}^2 &= -\sym{+}\sym{-} - i\comm{\sym{31},\sym{23}}\\
	&= -\sym{+}\sym{-}  -i \sym{12} - 2i\sym{123}\sym{3} - i\comm{\sym{1},\sym{2}}\\
	&=- \sym{+}\sym{-} + \sym{0} - 2i\sym{123}\sym{3} + \tfrac{1}{2}\comm{T_+,T_-}.
	\end{align*}
	Follow up with 
	\begin{align*}
		T_+T_- &=(\sym{1} + i\sym{2})(\sym{1}-i\sym{2}) = \sym{1}^2 + \sym{2}^2 - i\comm{\sym{1},\sym{2}},
		\intertext{hence,}
		\sym{1}^2 + \sym{2}^2 &= T_+T_- - \tfrac{1}{2}\comm{T_+,T_-}.
	\end{align*}
	Using the expression for $\comm{\sym{+},\sym{-}}$ to invert $\sym{+}\sym{-}$ and $T_+T_-$, we have the two equalities 
	\begin{align*}
		\sym{123}^2 &= -1/4 + T_+T_- - \sym{+}\sym{-} + \sym{3}^2 - \sym{0}^2 + \sym{0} - 2i\sym{123}\sym{3},\\
		&= -1/4 + T_-T_+ - \sym{-}\sym{+} +\sym{3}^2 - \sym{0}^2 - \sym{0} + 2i\sym{123}\sym{3},
	\end{align*}
	replacing $i\sym{3} = T_0$ and factorising finish the proof.
\end{proof}

The next proposition introduces the two ladder operators and their factorisations.
\begin{proposition}\label{prop:ladderdihedral}
	Consider $\Lad{+} = \tfrac{1}{2}\acomm{\sym{0},\sym{+}}$ and $\Lad{-} =  \tfrac{1}{2}\acomm{\sym{0},\sym{-}}$. They form a pair of ladder operators: 
	\begin{equation}\label{eq:laddihedral}
	\comm{\sym{0},\Lad+} = \Lad+, \quad \comm{\sym{0},\Lad-} = -\Lad-.
	\end{equation}
	Furthermore, they have the following factorisations:
	\begin{align}
	\Lad{+}\Lad{-} &= -\left((\sym0 - 1/2)^2 + (\sym{123}+T_0)^2\right)\left((\sym0 -1/2)^2 - T_+T_-\right), \label{eq:factoLpLm}\\
	\Lad{-}\Lad{+} &= -\left((\sym0 + 1/2)^2 + (\sym{123}-T_0)^2\right)\left((\sym0 + 1/2)^2 - T_-T_+\right). \label{eq:factoLmLp}
	\end{align}
\end{proposition}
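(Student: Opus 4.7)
The ladder relations~\eqref{eq:laddihedral} follow from the elementary identity $[A, \{A, B\}] = \{A, [A, B]\}$, which gives $[\sym{0}, L_{+}] = \tfrac{1}{2}\{\sym{0}, [\sym{0}, \sym{+}]\}$. Proposition~\ref{prop:commOdihedral} expands $[\sym{0}, \sym{+}] = \sym{+} + \{\sym{123}, T_{+}\} + [T_{0}, T_{+}]$, and the two extra summands anticommute with $\sym{0}$: indeed, $\sym{123}$ is central in $\SymAlg$, while $[\sym{0}, T_{0}] = 0$ and $\{\sym{0}, T_{+}\} = 0$ by Lemma~\ref{lem:commWO0}, so $\{\sym{0}, \sym{123}T_{+}\} = \sym{123}\{\sym{0}, T_{+}\} = 0$ and likewise $\{\sym{0}, T_{0}T_{+}\} = T_{0}\{\sym{0}, T_{+}\} = 0$. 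Hence $\{\sym{0}, [\sym{0}, \sym{+}]\} = \{\sym{0}, \sym{+}\} = 2L_{+}$, proving $[\sym{0}, L_{+}] = L_{+}$. The parallel argument, using $[\sym{0}, \sym{-}] = -\sym{-} + \{\sym{123}, T_{-}\} - [T_{0}, T_{-}]$, yields $[\sym{0}, L_{-}] = -L_{-}$.

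For the factorisations, I first rewrite $L_{\pm}$ in a more tractable form. Centrality of $\sym{123}$ gives $\{\sym{123}, T_{\pm}\} = 2\sym{123}T_{\pm}$, and~\eqref{eq:T0andT} gives $[T_{0}, T_{\pm}] = \pm 2T_{0}T_{\pm}$. Setting $K_{\pm} := \sym{123} \pm T_{0}$, the commutators collapse to $[\sym{0}, \sym{\pm}] = \pm\sym{\pm} + 2K_{\pm}T_{\pm}$. Exploiting $\tfrac{1}{2}\{A,B\} = AB - \tfrac{1}{2}[A,B] = BA + \tfrac{1}{2}[A,B]$, this delivers the two equivalent expressions
\begin{align*}
L_{+} &= (\sym{0} - \tfrac{1}{2})\sym{+} - K_{+}T_{+} = \sym{+}(\sym{0} + \tfrac{1}{2}) + K_{+}T_{+},\\
L_{-} &= (\sym{0} + \tfrac{1}{2})\sym{-} - K_{-}T_{-} = \sym{-}(\sym{0} - \tfrac{1}{2}) + K_{-}T_{-}.
\end{align*}

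Then I expand $L_{+}L_{-} = [(\sym{0} - \tfrac{1}{2})\sym{+} - K_{+}T_{+}][\sym{-}(\sym{0} - \tfrac{1}{2}) + K_{-}T_{-}]$ into four terms. The diagonal term yields $[(\sym{0} - \tfrac{1}{2})^{2} + K_{+}^{2}][T_{+}T_{-} - (\sym{0} - \tfrac{1}{2})^{2}]$ after substituting $\sym{+}\sym{-} = T_{+}T_{-} - (\sym{0} - \tfrac{1}{2})^{2} - K_{+}^{2}$ from Corollary~\ref{coro:sqO123} and noting that both $T_{+}T_{-}$ and $K_{+}^{2}$ commute with $\sym{0}$. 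The last cross term gives $-K_{+}^{2}T_{+}T_{-}$ via $T_{+}K_{-} = K_{+}T_{+}$. The two middle cross terms combine into $K_{+}\{\sym{0} - \tfrac{1}{2}, \sym{+}T_{-}\}$ upon applying $T_{+}\sym{-} = -\sym{+}T_{-}$ (Lemma~\ref{lem:commWO0}) and $\sym{+}K_{-} = K_{+}\sym{+}$ (a consequence of centrality of $\sym{123}$ together with $\{\sym{+}, T_{0}\} = 0$); using $(\sym{0} - \tfrac{1}{2})\sym{+} = \sym{+}(\sym{0} + \tfrac{1}{2}) + 2K_{+}T_{+}$ and $T_{-}(\sym{0} - \tfrac{1}{2}) = -(\sym{0} + \tfrac{1}{2})T_{-}$, this anticommutator collapses to $2K_{+}T_{+}T_{-}$, so the sum of the four contributions is precisely~\eqref{eq:factoLpLm}. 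The identity~\eqref{eq:factoLmLp} follows from the mirror computation with $K_{+}, T_{+}$ and $K_{-}, T_{-}$ interchanged and $\sym{0} - \tfrac{1}{2}$ replaced by $\sym{0} + \tfrac{1}{2}$. The bookkeeping of many (anti)commutation relations across the four-term expansion is the main obstacle; the decisive simplification is that $\{\sym{0} - \tfrac{1}{2}, \sym{+}T_{-}\}$ reduces to a pure $T_{+}T_{-}$ term, eliminating all residual $\sym{0}$-linear contributions.
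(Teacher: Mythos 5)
Your proof of the ladder relations \eqref{eq:laddihedral} is exactly the paper's argument: the identity $\comm{A,\acomm{A,B}}=\acomm{A,\comm{A,B}}$, Proposition~\ref{prop:commOdihedral}, and the observation that the correction terms anticommute with $\sym{0}$ by centrality of $\sym{123}$ and Lemma~\ref{lem:commWO0}. For the factorisations, however, your organisation is genuinely cleaner than the paper's. The paper expands $4\Lad{+}\Lad{-}=(\sym{0}\sym{+}+\sym{+}\sym{0})(\sym{0}\sym{-}+\sym{-}\sym{0})$ term by term, needs the auxiliary claim $\comm{\sym{+}\sym{-},\sym{0}}=0$, and applies Proposition~\ref{prop:commOdihedral} repeatedly to reach the intermediate identity $\Lad{+}\Lad{-}=(\sym{123}+T_0)^2T_+T_-+(\sym{0}-1/2)^2\sym{+}\sym{-}$, after which Corollary~\ref{coro:sqO123} gives the factorised form. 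Your closed two-term expressions $\Lad{+}=(\sym{0}-\tfrac12)\sym{+}-K_+T_+=\sym{+}(\sym{0}+\tfrac12)+K_+T_+$, with $K_{\pm}=\sym{123}\pm T_0$ (correct, by Proposition~\ref{prop:commOdihedral}, centrality, and \eqref{eq:T0andT}), replace that long computation by a four-term binomial expansion, and the intertwining identities you invoke, $T_+K_-=K_+T_+$, $\sym{+}K_-=K_+\sym{+}$, $T_+\sym{-}=-\sym{+}T_-$, are all valid consequences of Lemma~\ref{lem:commWO0} and centrality. Both proofs rest on the same three ingredients (Proposition~\ref{prop:commOdihedral}, Lemma~\ref{lem:commWO0}, Corollary~\ref{coro:sqO123}); yours buys shorter bookkeeping at the cost of introducing $K_{\pm}$.

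One slip needs correcting: the value you assign to the diagonal term is wrong. Substituting Corollary~\ref{coro:sqO123} into $(\sym{0}-\tfrac12)\sym{+}\sym{-}(\sym{0}-\tfrac12)$ gives $(\sym{0}-\tfrac12)^2\left(T_+T_--(\sym{0}-\tfrac12)^2-K_+^2\right)$, not $\left((\sym{0}-\tfrac12)^2+K_+^2\right)\left(T_+T_--(\sym{0}-\tfrac12)^2\right)$; the latter is the entire right-hand side of \eqref{eq:factoLpLm} and exceeds the true diagonal term by exactly $K_+^2T_+T_-$. As written, your three contributions sum to \eqref{eq:factoLpLm} plus a spurious $K_+^2T_+T_-$. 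With the diagonal term corrected, your (correct) cross-term values $-K_+^2T_+T_-$ and $2K_+^2T_+T_-$ make the four contributions add up to $\left((\sym{0}-\tfrac12)^2+K_+^2\right)T_+T_--\left((\sym{0}-\tfrac12)^2+K_+^2\right)(\sym{0}-\tfrac12)^2$, which factors, since $K_+^2$ commutes with $\sym{0}$, into \eqref{eq:factoLpLm}. So the argument is sound and only that one displayed intermediate needs fixing; the same correction applies to the mirror computation for \eqref{eq:factoLmLp}.
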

\begin{proof}
	Start by expanding equation~\eqref{eq:laddihedral} by the definition of $\Lad{\pm}$:
	\begin{align*}
	2\comm{\sym{0},\Lad{\pm}} &= \comm{\sym{0},\acomm{\sym{0},\sym{\pm}}} = \acomm{\sym{0},\comm{\sym{0},\sym{\pm}}}\\
	&= \acomm{\sym{0},\pm\sym{\pm}} + \acomm{\sym{0}, \acomm{\sym{123},T_{\pm}}} + i\acomm{\sym{0},\comm{\sym{3},T_{\pm}}},
	\intertext{and as $\sym{123}$ is central, and $T_-$ and $T_+$ anticommute with $\sym{0}$ by Lemma~\ref{lem:commWO0}, it reduces to}
	\comm{\sym{0},\Lad{\pm}}&= \pm \Lad{\pm}.
	\end{align*}
	Before getting to the factorisations, the following claim
	\begin{equation}\label{eq:OpmOmpO0comm}
	\comm{\sym{+}\sym{-},\sym{0}} = 0 = \comm{\sym{-}\sym{+},\sym{0}},
	\end{equation}
	will be proven for the first case; the second being similar. Use Proposition~\ref{prop:commOdihedral} to replace the commutators of $\sym{+}$, $\sym{-}$ and $\sym{0}$, and use Lemma~\ref{lem:commWO0} to send $\sym{-}$ in front 
	\begin{align*}
	\comm{\sym{+}\sym{-},\sym{0}} &= \sym{+}\comm{\sym{-},\sym{0}} + \comm{\sym{+},\sym{0}}\sym{-} \\
	&= \sym{+}(\sym{-} - 2\sym{123}T_- + 2i\sym{3}T_-) + (-\sym{+} - 2i \sym{123}T_+ -2 i\sym{3}T_+)\sym{-}	=0.
	\end{align*}
	
	In order to give the factorisation, replace the ladder operators by their definitions and use the commutation relations of Proposition~\ref{prop:commOdihedral} and Lemma~\ref{lem:commWO0} to reach:
	\begin{align}
	\Lad{+}\Lad{-} &= (\sym{123} + T_0)^2T_+T_- + (\sym{0} - 1/2)^2\sym{+}\sym{-},\label{eq:LpLm1}\\
	\Lad{-}\Lad{+} &= (\sym{123} - T_0)^2T_-T_+ + (\sym{0} + 1/2)^2\sym{-}\sym{+}. \label{eq:LmLp1}
	\end{align}
	Since the actual computation is rather tricky, we will show the details to obtain equation~\eqref{eq:LpLm1} and trust the reader to do 
	the second. For clarity, we will add a factor $4$ to remove the fractions. Start by the definition of the ladder operators
	\begin{align*}
	4\Lad{+}\Lad{-} &= (\sym{0}\sym{+} + \sym{+}\sym{0})(\sym{0}\sym{-} + \sym{-}\sym{0})\\
	&=\sym{0}\underline{\sym{+}\sym{0}}\sym{-} + \sym{0}\underline{\sym{+}\sym{-}\sym{0}} + \underline{\sym{+}\sym{0}}\sym{0}\sym{-} + \underline{\sym{+}\sym{0}}\sym{-}\sym{0}.
	\end{align*}
	Apply to the underlined terms the commutation relations of Proposition~\ref{prop:commOdihedral} pertaining to $\comm{\sym{0},\sym{+}}$  and equation~\eqref{eq:OpmOmpO0comm}  to obtain 
	\begin{align*}
	4\Lad{+}\Lad{-}&= \sym{0}(\sym{0}\sym{+} - \sym{+} - 2\sym{123}T_+ - 2 i \sym{3}T_+)\sym{-} + \sym{0}^2\sym{+}\sym{-} \\
	&\quad + (\sym{0}\sym{+}-\sym{+}-2\sym{123}T_+ - 2i\sym{3}T_+)\sym{0}\sym{-}
	+ (\sym{0}\sym{+} - \sym{+} - 2\sym{123}T_+ - 2i\sym{3}T_+)\sym{-}\sym{0}
	\\
	&= 2\sym{0}^2\sym{+}\sym{-} - \sym{+}\sym{-} - 2\sym{123}T_+\sym{-} - 2i\sym{3}T_+\sym{-}\\
	&\quad + \sym{0}\underline{\sym{+}\sym{0}}\sym{-} - \underline{\sym{+}\sym{0}}\sym{-} - 2\sym{123}T_+\sym{0}\sym{-} - 2i\sym{3}T_+\sym{0}\sym{-}\\
	&\quad + \sym{0}\underline{\sym{+}\sym{-}\sym{0}} - \underline{\sym{+}\sym{-}\sym{0}} - 2\sym{123}T_+\underline{\sym{-}\sym{+}} - 2i\sym{3}T_+\underline{\sym{-}\sym{0}}.
	\end{align*}
	Use again Proposition~\ref{prop:commOdihedral} and equation~\eqref{eq:OpmOmpO0comm} on the underlined terms to get
	\begin{align*}
	4\Lad{+}\Lad{-}&= 4\sym{0}^2\sym{+}\sym{-} - 4\sym{0}\sym{+}\sym{-} + \sym{+}\sym{-}
	- 2\sym{123}T_+ (\sym{0}\sym{-} + \underline{\sym{-}\sym{0}} - \sym{-}) - 4\sym{0}\sym{123}T_+\sym{-}\\
	&\quad - 2i\sym{3}T_+(\sym{0}\sym{-} + \underline{\sym{-}\sym{0}} - \sym{-}) - 4i\sym{0}\sym{3}T_+\sym{-}.
	\end{align*}
	At this point, replace the last instances of $\sym{-}\sym{0}$ with Proposition~\ref{prop:commOdihedral} to obtain
	\begin{align*}
	4\Lad{+}\Lad{-} &= 4\sym{0}^2\sym{+}\sym{-} - 4\sym{0}\sym{+}\sym{-} + \sym{+}\sym{-}	- 2\sym{123}T_+ (2\underline{\sym{0}}\sym{-} -2\sym{123}T_- + 2i\underline{\sym{3}}T_- )\\
	&\quad - 2i\sym{3}T_+(2\underline{\sym{0}}\sym{-} -2\sym{123}T_- + 2i\underline{\sym{3}}T_- )	- 4\sym{0}\sym{123}T_+\sym{-} - 4i\sym{0}\sym{3}T_+\sym{-}.
	\end{align*}
Make use of Lemma~\ref{lem:commWO0} to then send the underlined $\sym{0}$ and $\sym{3}$ in front, which will give the expanded equation~\eqref{eq:LpLm1}
	\begin{align*}
	4\Lad{+}\Lad{-}	&= 4\sym{0}^2\sym{+}\sym{-} - 4\sym{0}\sym{+}\sym{-} + \sym{+}\sym{-} + 4\sym{123}^2T_+T_- + 8i\sym{123}\sym{3}T_+T_- - 4\sym{3}^2T_+T_-.
	\end{align*}
	
	With this, we proved the first factorisation~\eqref{eq:LpLm1}. To conclude, employ Corollary~\ref{coro:sqO123} to replace $\sym{+}\sym{-}$ in equation~\eqref{eq:LpLm1}:
	\begin{align*}
	\Lad{+}\Lad{-} &= (\sym{123} + i \sym{3})^2T_+T_- + (\sym{0} - 1/2)^2(T_+T_- - (\sym{0} - 1/2)^2 - (\sym{123} + i \sym{3})^2)\\
	&= \left((\sym{123}+i\sym{3})^2 + (\sym{0} - 1/2)^2\right)T_+T_- - (\sym{0}-1/2)^2\left((\sym{0}-1/2)^2 + (\sym{123} + i \sym{3})^2\right),
	\intertext{and because $\sym{0}$ commutes with $\sym{123}$ and $\sym{3}$, it factorises as}
	&= -\left( (\sym{0} - 1/2)^2+ (\sym{123} +  T_0)^2\right)\left((\sym{0} - 1/2)^2 - T_+T_-\right).
	\end{align*}
	
	The same process applies of course with the expression of $\sym{-}\sym{+}$ in Corollary~\ref{coro:sqO123} for equation~\eqref{eq:LmLp1}.	The two factorisations~\eqref{eq:factoLpLm} and~\eqref{eq:factoLmLp} have been exhibited, concluding the proof.
\end{proof}

Put  $\dtau:= \dcover{\sigma}_1\dcover{\sigma}_m$, $\dtau^{-1} =\dcover{\sigma}_m\dcover{\sigma}_1$ and as before $\zeta = e^{\pi i/m}$. From Lemma~\ref{lem:commWO0}, the actions of the reflections on the symmetries and on the ladder operators are given by
\begin{equation}\label{eq:actionofreflonOpOmandLpm}
\begin{aligned}
\dcover{\sigma}_0\sym{0} &= \sym{0}\dcover{\sigma}_0,  & \dcover{\sigma}_1\sym{0} &= -\sym{0}\dcover{\sigma}_1, & \dcover{\sigma}_m\sym{0} &= -\sym{0}\dcover{\sigma}_m,\\
\dcover{\sigma}_0\sym{\!+} &= -\sym{+}\dcover{\sigma}_0,  & \dcover{\sigma}_1\sym{+} &= \zeta^2\sym{-}\dcover{\sigma}_1, & \dcover{\sigma}_m\sym{+} &= \sym{-}\dcover{\sigma}_m,\\
\dcover{\sigma}_0\sym{-} &= -\sym{-}\dcover{\sigma}_0, & \dcover{\sigma}_1\sym{-} &= \zeta^{-2}\sym{+}\dcover{\sigma}_1, & \dcover{\sigma}_m\sym{-} &= \sym{+}\dcover{\sigma}_m,\\
\dcover{\sigma}_0\Lad{+} &= -\Lad{+}\dcover{\sigma}_0,  & \dcover{\sigma}_1\Lad{+} &= -\zeta^2\Lad{-}\dcover{\sigma}_1, & \dcover{\sigma}_m\Lad{+} &= -\Lad{-}\dcover{\sigma}_m,\\
\dcover{\sigma}_0\Lad{-} &= -\Lad{-}\dcover{\sigma}_0,  & \dcover{\sigma}_1\Lad{-} &= -\zeta^{-2}\Lad{+}\dcover{\sigma}_1, & \dcover{\sigma}_m\Lad{-} &= -\Lad{+}\dcover{\sigma}_m.\\
 \end{aligned}
\end{equation}
\begin{equation}\label{eq:actiontauandTonL}
\begin{aligned}
\dtau\Lad{+} &= \zeta^{-2}\Lad{+}\dtau, & 
 \dtau\Lad{-} &= \zeta^2\Lad{-}\dtau,\\
 \dtau^{-1}\Lad{+} &= \zeta^2\Lad{+}\dtau^{-1}, & 
 \dtau^{-1}\Lad{-} &= \zeta^{-2} \Lad{-}\dtau^{-1},\\
T_+ \Lad{-} &= \Lad{+}T_-, & T_-\Lad{+} &= \Lad{-}T_+.
\end{aligned}
\end{equation}

\subsection{Unitary structure}\label{ssec:unitarity}
The commutation relations of the two-index symmetries~\eqref{eq:commrelO0OpOm} of the algebra $\SymAlg$ reduce to the commutation relations of $\Lie{so}(3)$ (or of $\Lie{sl}(2)$) when the map $\kappa$ becomes zero. Any $*$-structure on $\SymAlg$ must then reduce to either $\Lie{su}(2)$ or $\Lie{su}(1,1)$, of which only $\Lie{su}(2)$  admits finite-dimensional unitary representations. Let $*:\SymAlg\to\SymAlg$ be the anti-linear ($(aX+bY)^* = \conj{a} X^* + \conj{b}Y^*$) anti-involution ($(XY)^* = Y^* X^*$, $(X^*)^* = X$) defined on generators by
\begin{equation}
	\begin{aligned}
		 \sym{0}^* &= \sym{0}, & \sym{\pm}^* &= \sym{\mp}, & \sym{123}^* &= -\sym{123}, & T_0^* &= -T_0, & T_{\pm}^* &= T_{\mp}, & \dsig{j}^* &= \dsig{j},
	\end{aligned}
\end{equation}
for any reflection $\dsig{j}\in \SymAlg$. As a consequence, the ladder operators satisfy $\Lad{\pm}^*= \Lad{\mp}$.

Direct computations show that the relations are compatible with the commutation relations~\eqref{eq:commrelO0OpOm}. Furthermore, sending $\kappa$ to 0 indeed gives the $*$-structure of $\Lie{su}(2)$, thus admitting finite-dimensional irreducible representations.

It would be possible to study unitarity with another structure, say imposing $\dsig{j}^{\dagger} = - \dsig{j}$, $\sym{\pm}^{\dagger} = -\sym{\mp}$ and $T_{\pm}^{\dagger} = -T_{\mp}$, but this would not include the important monogenics example.

\section{Finite-dimensional irreducible representations}\label{sec:Repr}
This section classifies the finite-dimensional irreducible representations of $\SymAlg$ and verifies if they are unitary under the $*$-structure presented in Section~\ref{ssec:unitarity}. The section is divided \modif{into three parts:} first the theorems are presented, then the idea of the proofs are exhibited, and the last part gives the details of the proof of Theorem~\ref{thm:irrepseven}. Since the proof of Theorem~\ref{thm:irrepsodd} is \modif{similar}, it is not included\footnote{It can be found in \modif{an older version of the paper:} \href{https://arxiv.org/abs/2010.03381v2}{arXiv:2010.03381v2}.}. 

\modif{The techniques employed share some similarities with the construction of standard modules for the representation theory of rational Cherednik algebras~\cite{chmutova_representations_2006,dezelee_representations_2003, rouquier2005representations}. We construct the irreducible finite-dimensional $\SymAlg$-representations from a certain class of representations of $\dcover{W}$.} For the benefit of the reader, the complete construction of \modif{the finite-dimensional irreducible representations of the group $\dcover{W}$} is included in Appendix~\ref{app:doublecover},  and they are presented in Theorem~\ref{thm:irrepsW}. \modif{The specific representations we need are those for which the commuting element $z\in\dcover{W}$ acts as $-\mathrm{id}$ ($\varepsilon=-1$ in the notation of Theorem~\ref{thm:irrepsW}); this comes from the realisation of the group $\dcover{W}$ in $\SymAlg$.} These specific representations are called \emph{spin representations}~\cite{Morris76}.

The existence of irreducible \modif{finite-dimensional $\SymAlg$-representations} , and their unitarity, is constrained by the map $\kappa$. For an integer $N$ and a certain irreducible spin representation $V$ of $\dcover{W}$, \modif{Theorem~\ref{thm:irrepsodd} ($m$ odd) and Theorem~\ref{thm:irrepseven} ($m$ even) present the conditions on $\kappa$ for the existence of an irreducible $\SymAlg$-representation \modif{$L_{\lambda,\Lambda}(V)$} of \modif{dimension} $2N+2$.} Note that the theorems take $V$ and $N$ independently. Even if the conditions constraining $\kappa$ depend on both of them, it means that the \modif{$\dcover{W}$-representation} $V$ does not fix the dimension.

The theorems adopt some notational conventions. First, $j,k\in \{1,\dots, N\}$. Second, $a\equiv_m b$ is a short-hand for $a\equiv b \modSB m$.  The tables are divided in families according to some conditions linked to $N$, the \modif{data} of $V$, and the values of $\lambda$ \modif{and $\Lambda$}.  When $m$ is even, we denote $m=2p$. The constants $\kappa_0$, $\kappa_1$ and $\kappa_m$ will be real and positive.  \modif{Finally, the indices of $\lambda$ in the tables indicate the several possibilities of the parameter.}



\begin{theorem}[Conditions for irreducible and unitary representations, the odd cases]\label{thm:irrepsodd}
	 Let $\kappa_0$ and $\kappa_1$ be positive constants. Let $N$ be a non-negative integer and let $V\simeq Y_{\ell}(-1,\delta)$ be an irreducible spin representation of $\dcover{W}$.  If the conditions on $\kappa_0$ and $\kappa_1$ presented in the next tables are respected, then $V$ extends to a $2N+2$ dimensional irreducible representation \modif{$\stdquo$} of $\SymAlg$. \modif{Moreover, the} constant  $\Lambda$ lies in one of the two families
\begin{equation}
    \begin{aligned}
	\Lambda_1 &\in \{\pm i(\modif{\lambda} + 1/2 + \kappa_0\delta)\} & &\text{or}& \Lambda_2 &\in \{\mp i(\modif{\lambda} +1/2 - \kappa_0\delta)\}.
	\end{aligned}
\end{equation}
 It is unitary if $\kappa_0$ and $\kappa_1$ satisfy further conditions. Furthermore, all irreducible finite-dimensional representations of $\SymAlg$ are of this form.
			\begin{equation*}
			\begin{array}{p{0.075\textwidth}|p{0.55\textwidth}|p{0.3\textwidth}}
			\multicolumn{3}{l}{\text{Case I: } 2(N+\ell)+1\equiv_m 0,\hfil \modif{ \lambda=\lambda_1 = N+1/2+\kappa_1m \hfill} }\\ \hline
			\hline
			$(\Lambda,\delta)$&\textrm{Irreducibility} & \textrm{Unitarity}\\
			\hline
			$(\Lambda_1,1)$,
			
			$(\Lambda_2,-1)$
			&
			No restriction
			&   
			No restriction
			\\
			\hline
			$(\Lambda_2,1)$,
			
			$(\Lambda_1,-1)$
			&
			$\kappa_0 \not\in \{k/2, \, \lambda_1+1/2-k/2\mid \text{odd }k\}$	 
			&
			$\kappa_0 <1/2$ or $\kappa_0 >\lambda_1$  \\
			\hline
			\end{array}
			\end{equation*}
			\begin{equation*}
	\begin{array}{p{0.075\textwidth}|p{0.55\textwidth}|p{0.3\textwidth}}
 \hline
	\multicolumn{3}{l}{ \text{Case I: } 2(N+\ell)+1\equiv_m 0,\hfil  \modif{\lambda = \lambda_2 = N+1/2-\kappa_1m}\hfill }\\ \hline
	\hline
	$(\Lambda,\delta)$&\textrm{Irreducibility} & \textrm{Unitarity}\\
	\hline
	$(\Lambda_1,1)$,
	
	$(\Lambda_2,-1)$
	&
	$\kappa_0 \not\in \left\{-\lambda_2-1/2+k/2\ \middle|\ \text{odd } k\right\}$
	
	$\kappa_1 \not\in \left\{\tfrac{N-k+1}{2m},\, \tfrac{N-j+1}{m}\ \middle|\ 2(k+\ell)\equiv_m 1,\, 2j+\ell\not\equiv_m -1\right\}$
	&   
	$\kappa_1< \tfrac{1}{2m}$
	\\
	\hline
	$(\Lambda_2,1)$,
	
	$(\Lambda_1,-1)$
	&
	$\kappa_0 \not\in \{k/2,\,  \lambda_2+1/2-k/2\mid \text{odd }k\}$
	
	$\kappa_1 \not\in \left\{\tfrac{N-k+1}{2m}, \tfrac{N-j+1}{m} \ \middle|\ 2(k+\ell)\equiv_m 1,\, 2j+\ell\not\equiv_m -1\right\}$ 	 
	&
	$\kappa_0 <1/2$ or $\kappa_0 >\lambda_2$
	
	$\kappa_1 <\tfrac{1}{2m}$ \\
	\hline
	\end{array}
	\end{equation*}
		\begin{equation*}
	\begin{array}{p{0.075\textwidth}|p{0.45\textwidth}|p{0.4\textwidth}}
	\multicolumn{3}{l}{\text{Case II: } 2(N+\ell)+1\not\equiv_m 0,\hfil  \modif{\lambda = \lambda_3= N+1/2} \hfill}\\ \hline\hline
	$(\Lambda,\delta)$&\textrm{Irreducibility} & \textrm{Unitarity}\\
	\hline
	$(\Lambda_1,1)$,
	
	$(\Lambda_2,-1)$
	&
$\kappa_1 \not\in \left\{ \tfrac{N+1-k}{
	m}\ \middle| \ 2(k+\ell)\equiv_m 1\right\}$ 
	&  $\kappa_1< \min\left( \tfrac{N+1-k}{m} \ \middle|\ 2k+\ell\equiv_m 1\right)$
	\\
	\hline
	$(\Lambda_2,1)$,
	
	$(\Lambda_1,-1)$
	&
$\kappa_0 \not\in\left\{k/2,\, \lambda_3 +1/2 - k/2\ \middle|\ \text{odd }k\right\}$

$\kappa_1 \not\in \left\{ \tfrac{N+1-k}{m}\ \middle|\ 2(k+\ell)\equiv_m 1\right\}$	 
	&
	$\kappa_0 <1/2$ or $\kappa_0 >\lambda_3$
	
	$\kappa_1< \min\left(\tfrac{N+1-k}{m}\ \middle|\ 2k+\ell \equiv_m 1\right)$\\
	\hline
	\end{array}
	\end{equation*}
		\begin{equation*}
	\begin{array}{p{0.075\textwidth}|p{0.45\textwidth}|p{0.4\textwidth}}
	\multicolumn{3}{l}{\text{Case III: even } N\hfil  \modif{\lambda = \lambda_4 = N/2+\kappa\delta} \hfill}\\ \hline\hline
	$(\Lambda,\delta)$&\textrm{Irreducibility} & \textrm{Unitarity}\\
	\hline
	$(\Lambda_1,1)$,
	
	$(\Lambda_2,-1)$
	&
	$\kappa_0 \not\in\left\{ \tfrac{2k-N-1}{2} \ \middle| \ k>N/2,\, 2(k+\ell)\not\equiv_m 1\right\}$
	
	$\kappa_1 \not\in \left\{ \tfrac{|\modif{\lambda_4}-k+1/2|}{m}
	\middle| \ 2(k+\ell)\equiv_m 1\right\}$ 
	&  $\kappa_1< \min\left(\tfrac{|N/2+ \kappa_0 + 1/2-k|}{m}\ \middle|\ (2k+\ell)\equiv_m 1\right)$
	\\
	\hline
	$(\Lambda_2,1)$,
	
	$(\Lambda_1,-1)$
	&
	$\kappa_0 \not\in\left\{\tfrac{k}{2},\, \tfrac{N-k+1}{4},\, \tfrac{N+1-2j}{2}\ \middle|\ \text{odd }k,\; \; 2(j+\ell)\not\equiv_m 1\right\}$ 
	
	$\kappa_1 \not\in \left\{ \tfrac{|\modif{\lambda_4}-k+1/2|}{m}
	\middle| \ 2(k+\ell)\equiv_m 1\right\}$ 
	&
	$\kappa_0 <1/2$ or $\kappa_0 >N/2+1/2$
	
	$\kappa_1< \min\left(\tfrac{|N/2+\kappa_0 + 1/2-k|}{m} \ \middle|\ (2k+\ell) \equiv_m 1 \right)$\\
	\hline
	\end{array}
	\end{equation*}
\end{theorem}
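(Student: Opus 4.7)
The strategy follows the standard-module construction familiar from rational Cherednik algebras, adapted to the fact that the starting data is a 2-dimensional spin representation $V\simeq Y_\ell(-1,\delta)$ of $\dcover{W}$. Since $\sym{123}$ is central in $\SymAlg$ by Definition~\ref{def:symalg}, Schur's lemma forces it to act on any irreducible finite-dimensional module as a scalar $\Lambda$. The relations $\comm{\sym{0},\Lad{\pm}}=\pm\Lad{\pm}$ force $\sym{0}$ to be diagonalisable, and because $\dsig{1}$ anti-commutes with $\sym{0}$ by~\eqref{eq:actionofreflonOpOmandLpm}, the set of $\sym{0}$-weights is symmetric under $\mu\mapsto-\mu$. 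The module is therefore organised around a pair $\{v_+,v_-\}$ of highest/lowest weight vectors with $\sym{0}$-eigenvalues $\pm\lambda$, related by $\dsig{1}$; this pair spans a 2-dimensional $\dcover{W}$-submodule which, by irreducibility of the ambient module together with Theorem~\ref{thm:irrepsW}, is forced to be a spin irrep $Y_\ell(-1,\delta)$. The total length $2N+2$ of the ladder together with the integer spacing of $\sym{0}$-weights already constrains $\lambda$ to be a half-integer of the form $N+1/2$ up to contributions from $\kappa_0,\kappa_1$.

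The quantisation of $(\lambda,\Lambda)$ comes from the cancellations $\Lad{+}v_+=0$ and $\Lad{-}v_-=0$ through Proposition~\ref{prop:ladderdihedral}. Because $\dsig{0}$ commutes with $\sym{0}$ while anti-commuting with $\dsig{1}$ inside $\dcover{W}$, the operator $T_0=i\kappa_0\dsig{0}$ acts on $v_\pm$ by opposite scalars $\pm i\kappa_0\delta$; substituting this together with $\sym{0}\mapsto\pm\lambda$ into the factorised formulas~\eqref{eq:factoLpLm}--\eqref{eq:factoLmLp} yields two consistent quadratic identities. Their first factors fix $\Lambda$ as one of $\pm i(\lambda+1/2\pm\kappa_0\delta)$, giving the two families $\Lambda_1,\Lambda_2$ of the theorem. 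Their second factors reduce to $(\lambda+1/2)^2 = T_\mp T_\pm|_V$, so $\lambda$ is determined by the eigenvalue of $T_+T_-$ on the spin irrep $V$. That eigenvalue is itself controlled by whether the dihedral sum defining $T_\pm$ resonates with the character of $Y_\ell(-1,\delta)$, a dichotomy governed by the congruence $2(N+\ell)+1\equiv_m 0$. This produces precisely the trichotomy of Cases I--III: $\lambda_{1,2}=N+1/2\pm\kappa_1 m$ when $T_+T_-$ acts non-trivially (Case I), $\lambda_3=N+1/2$ when it acts as zero (Case II), and the special Case III with $\lambda_4=N/2+\kappa_0\delta$ arising from even $N$, where $V$ decomposes into two $\dcover{W}$-summands swapped by the ladder operators and the analysis has to be carried out on each piece separately.

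Irreducibility and unitarity are then read off the intermediate rungs of the ladder. At each height $0<k<2N+1$, the scalar by which $\Lad{-}\Lad{+}$ acts on $\Lad{-}^{k}v_+$ is obtained by substituting $\sym{0}\mapsto\lambda-k$ into the factorised formula~\eqref{eq:factoLmLp}; a proper submodule appears exactly when one of these scalars vanishes for some intermediate $k$, which translates into the explicit exclusion lists for $\kappa_0$ and $\kappa_1$ in the irreducibility columns of each sub-table. The $*$-structure of Section~\ref{ssec:unitarity} gives $\Lad{+}^{*}=\Lad{-}$, so the squared norm on the $k$-th rung is, up to a positive normalisation fixed by the unitary structure of $V$, the product of these intermediate scalars; demanding positivity for all $k$ yields the inequalities of the unitarity columns. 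Completeness of the classification follows by running the argument in reverse: any irreducible finite-dimensional $\SymAlg$-module admits a pair of extreme $\sym{0}$-weight vectors whose $\dcover{W}$-orbit is a spin irrep, and the construction above reproduces the entire module. I expect the principal obstacle, precisely as in the even case treated in Subsection~\ref{ssec:proofeven}, to be combinatorial bookkeeping: carefully cataloguing the eigenvalues of $T_0$, $T_\pm$ and $T_+T_-$ on each family $Y_\ell(-1,\delta)$, tracking the sign ambiguities in $(\Lambda,\delta)$, and matching the resulting arithmetic conditions with the three cases and the two $\Lambda$-families listed in the statement.
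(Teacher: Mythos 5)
Your overall strategy --- ladder operators, the factorisations \eqref{eq:factoLpLm}--\eqref{eq:factoLmLp}, boundary conditions fixing $(\lambda,\Lambda)$, vanishing of intermediate scalars for reducibility, positivity of norms for unitarity --- is indeed the paper's strategy, but there is a genuine gap at the heart of your quantisation step. Your two ``cancellations'' $\Lad{+}v_+=0$ and $\Lad{-}v_-=0$ are not independent: since $v_-=\dsig{1}v_+$ and $\dsig{1}\Lad{+}=-\zeta^{2}\Lad{-}\dsig{1}$ by \eqref{eq:actionofreflonOpOmandLpm}, the second condition follows from the first by applying $\dsig{1}$. They therefore yield a \emph{single} quadratic identity (the value of \eqref{eq:factoLmLp} at $\sym{0}=\lambda$), and from one product being zero you may conclude only that \emph{one} factor vanishes --- not, as you assert, that the first factor fixes $\Lambda$ while the second simultaneously fixes $\lambda$. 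The missing equation is the one at the opposite end of the ladder, $\Lad{-}^{N+1}v_0^+=0$ (equivalently $\Lad{+}\Lad{-}v_N^+=0$), whose factorised form involves $(\lambda-N-1/2)^2$ and is the only place where $N$ enters; it is the second equation of the paper's system (the odd-case analogue of \eqref{eq:repevencond}), and the trichotomy of Cases I--III is exactly the case analysis of which factor of \emph{that} equation vanishes, the first equation being normalised to fix $\Lambda$. Without it your mechanism cannot produce the stated eigenvalues: your relation $(\lambda+1/2)^2=T_-T_+|_V$ would give $\lambda=-1/2\pm\kappa_1 m$, not $\lambda_{1,2}=N+1/2\pm\kappa_1 m$, and the appeal to ``integer spacing and total length of the ladder'' is not a substitute for actually imposing the far-end condition.

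Related to this is a structural misreading of the module. It is not a single string $\Lad{-}^{k}v_+$ of length $2N+2$ running from weight $\lambda$ down to $-\lambda$ (your irreducibility analysis over $0<k<2N+1$ presupposes this); by Lemma~\ref{lem:inducrep} it consists of two strings of length $N+1$ each, $v_k^+=\Lad{-}^k v_0^+$ with weights $\lambda-k$ and $v_k^-=\dsig{m}v_k^+$ with weights $k-\lambda$, swapped by the group action, with $\Lad{-}^{N+1}v_0^+=0$. The dimension $2N+2$ comes from this doubling, not from the length of one ladder, and the relevant scalars are $A(k)$ for $1\le k\le N$ only. Finally, your gloss on Case III is not correct: it does not arise from ``$V$ decomposing into two $\dcover{W}$-summands swapped by the ladder operators'', but from both boundary equations being solved by their $\Lambda$-factors, which forces $\lambda=N/2+\kappa_0\delta$ and is consistent only for even $N$ (for odd $N$ the paper derives a contradiction with the relation for $\comm{\sym{0},\sym{-}}$).
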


\begin{theorem}[Conditions for irreducible and unitary representations, the even cases]\label{thm:irrepseven}
Denote $m=2p$ for a certain $p\in\NN$. Let $\kappa_0$, $\kappa_1$ and $\kappa_m$ be positive constants. Let $N$ be a non-negative integer and $V\simeq Y_{2\ell+1}(-1,\delta)$ be an irreducible spin representation of $\dcover{W}$. If the conditions on $\kappa_0$, $\kappa_1$ and $\kappa_m$ of the following tables hold, then $V$ extends to a $2N+2$ dimensional irreducible $\SymAlg$-representation \modif{$\stdquo$}. \modif{Moreover, the} constant $\Lambda$ is of the form
\begin{align}
        \Lambda_1 &\in \{\pm i(\modif{\lambda}+1/2 +\kappa_0 \delta)\} &\text{or}&&\Lambda_2 &\in \{\mp i(\modif{\lambda} +1/2 - \kappa_0\delta)\}.
\end{align}
   It is unitary if $\kappa_0$, $\kappa_1$ and $\kappa_m$ satisfy more restrictive conditions presented thereafter. Furthermore, all irreducible finite-dimensional representations of $\SymAlg$ are of this form.

	\begin{equation*}
\begin{array}{p{0.075\textwidth}|p{0.5\textwidth}|p{0.35\textwidth}}
\multicolumn{3}{l}{\text{Case I.i: } N+\ell\equiv_m 1-p, \hfil \modif{\lambda = \lambda_1 = N+1/2+(\kappa_1+\kappa_m)p}\hfill }\\ \hline \hline
$(\Lambda,\delta)$&\textrm{Irreducibility} & \textrm{Unitarity}\\
\hline
$(\Lambda_1,1)$,

$(\Lambda_2,-1)$
&
No restriction
&
No restriction \\
\hline
$(\Lambda_2,1)$,

$(\Lambda_1,-1)$
&
$\kappa_0 \not\in \{k/2, \, \lambda_1+1/2-k/2\mid \text{odd }k\}$
&
$\kappa_0 <1/2,$ or $\kappa_0 >\lambda_1$  \\
\hline
\end{array}
\end{equation*}
	\begin{equation*}
\begin{array}{p{0.075\textwidth}|p{0.5\textwidth}|p{0.35\textwidth}}
 \hline
\multicolumn{3}{l}{\modif{\text{Case I.i: } N+\ell\equiv_m 1-p, } \hfil    \modif{\lambda = \lambda_2 = N+1/2-(\kappa_1+\kappa_m)p}\hfill }\\
\hline\hline
$(\Lambda,\delta)$&\textrm{Irreducibility} & \textrm{Unitarity}\\
\hline
$(\Lambda_1,1)$,

$(\Lambda_2,-1)$
&
 $\kappa_0 \not\in \left\{-\lambda_2-1/2+k/2\ \middle|\ \text{odd } k\right\}$

$\kappa_1,\, \kappa_m \not\in \left\{\tfrac{N-k+1}{m} \ \middle|\ 2 -k-\ell\equiv_m 0 \right\}$

$\kappa_1+\kappa_m \not\in \left\{\tfrac{N-k+1}{m},\, \tfrac{N-j+1}{p}\ \middle|\ 2-k-\ell \equiv_m p,\, j \not\equiv_m 0,p\right\}$
&
$\kappa_1+\kappa_m< 1/p$ \\
\hline
$(\Lambda_2,1)$,

$(\Lambda_1,-1)$
&
$\kappa_0 \not\in \left\{k/2,\,  \lambda_2+1/2-k/2\ \middle|\ \text{odd }k\right\}$

$\kappa_1,\, \kappa_m \not\in \left\{\tfrac{N-k+1}{m} \ \middle|\ 2 -k-\ell\equiv_m 0 \right\}$

$\kappa_1+\kappa_m \not\in \left\{\tfrac{N-k+1}{m},\, \tfrac{N-j+1}{p}\ \middle|\ 2-k-\ell \equiv_m p,\, j \not\equiv_m 0,p \right\}$
&
$\kappa_0 <1/2$ or $\kappa_0 >\lambda_2$

$\kappa_1+\kappa_m <1/p$    \\
\hline
\end{array}
\end{equation*}
			\begin{equation*}
	\begin{array}{p{0.075\textwidth}|p{0.5\textwidth}|p{0.35\textwidth}}
	\multicolumn{3}{l}{\text{Case I.ii: } N+\ell\equiv_m 1, \hfil   \modif{\lambda = \lambda_3 = N+1/2+(\kappa_1-\kappa_m)p} \hfill }\\ \hline\hline
	$(\Lambda,\delta)$&\textrm{Irreducibility} & \textrm{Unitarity}\\
	\hline
	$(\Lambda_1,1)$,

	$(\Lambda_2,-1)$
	&
	$\kappa_m \not\in \left\{\tfrac{N-k+1}{m} \ \middle|\ 2 -k-\ell\equiv_m p\right\}$ 
	&
	$\kappa_m< \min\left(\tfrac{N-k+1}{m}\ \middle|\ 2-k-\ell \equiv_m p\right)$ \\
	\hline
	$(\Lambda_2,1)$,
	
	$(\Lambda_1,-1)$
	&
	$\kappa_0 \not\in \{k/2,\, \lambda_3+1/2-k/2\mid \text{odd }k\}$
	
	$\kappa_m \not\in \left\{\tfrac{N-k+1}{m}\ \middle|\ 2 -k-\ell\equiv_m p \right\}$
	&
	$\kappa_0 <1/2$ or $\kappa_0 >\lambda_3$
	
	$\kappa_m< \min\left(\tfrac{N-k+1}{m}\ \middle|\ 2-k-\ell \equiv_m p\right)$ \\
	\hline
	\end{array}
	\end{equation*}
	\begin{equation*}
	\begin{array}{p{0.075\textwidth}|p{0.5\textwidth}|p{0.35\textwidth}}
	\hline
	\multicolumn{3}{l}{\modif{\text{Case I.ii: } N+\ell\equiv_m 1,} \hfil  \modif{\lambda = \lambda_4 = N+1/2-(\kappa_1-\kappa_m)p} \hfill}\\
	\hline\hline
	$(\Lambda,\delta)$&\textrm{Irreducibility} & \textrm{Unitarity}\\
	\hline
		$(\Lambda_1,1)$,
	
	$(\Lambda_2,-1)$
	&
	 $\kappa_1 \not\in \left\{\tfrac{N-k+1}{m}\ \middle| \ 2 -k-\ell\equiv_m p\right\}$ 
	
	$\kappa_1-\kappa_m \not\in \left\{\tfrac{N-k+1}{m},\, \tfrac{N-j+1}{p}\ \middle|\ {2-k-\ell \equiv_m 0,\atop 2-j-\ell\not\equiv_m 0,p} \right\}$ 
	&
	  $\kappa_1 - \kappa_m< 1/p$ 
	
	$\kappa_1 < \min\left(\tfrac{N-k+1}{m}\ \middle| \ 2-k-\ell \equiv_m p\right)$\\
	\hline
	$(\Lambda_2,1)$,
	
	$(\Lambda_1,-1)$
	&
	$\kappa_0 \not\in \{k/2,\, \lambda_4+1/2-k/2\mid \text{odd }k\}$ 
	
	$\kappa_1 \not\in \left\{\tfrac{N-k+1}{m} \ \middle|\ 2 -k-\ell\equiv_m p \right\}$ 
	
	$\kappa_1-\kappa_m \not\in \left\{\tfrac{N-k+1}{m},\ \tfrac{N-j+1}{p}\ \middle|\ {2-k-\ell \equiv_m 0,\atop 2-j-\ell\not\equiv_m 0,p} \right\}$
	&
	$\kappa_0 <1/2$ or $\kappa_0 >\lambda_4$ 
	
	$\kappa_1 - \kappa_m< 1/p$ 
	
	$\kappa_1<\min\left(\tfrac{N-k+1}{m}\ \middle|\ 2-k-\ell \equiv_m p\right)$\\
	\hline
	\end{array}
	\end{equation*}
	
		\begin{equation*}
	\begin{array}{p{0.075\textwidth}|p{0.45\textwidth}|p{0.400\textwidth}}
	\multicolumn{3}{l}{\text{Case II: } (N+\ell)\not\equiv_m 1,\,1-p \hfil  \modif{\lambda = \lambda_5= N+1/2} \hfill }\\\hline \hline
	$(\Lambda,\delta)$&\textrm{Irreducibility} & \textrm{Unitarity}\\
	\hline
	$(\Lambda_1,1)$,
	
	$(\Lambda_2,-1)$
	&
	$\kappa_1 + \kappa_m \not\in \left\{\tfrac{N-k+1}{p}\ \middle|\ 2 -k-\ell\equiv_m p \right\}$ 
	
	$|\kappa_1-\kappa_m| \not\in \left\{\tfrac{N-k+1}{p}\ \middle|\ 2-k-\ell \equiv_m 0 \right\}$ 
	&  $\kappa_1 + \kappa_m < \min\left(\tfrac{N-k+1}{m}\ \middle|\ 2-k-\ell\equiv_m p\right)$
	
	$|\kappa_1-\kappa_m| < \min\left(\tfrac{N-k+1}{p}\ \middle|\ 2-k-\ell \equiv_m 0\right)$
	\\
	\hline
	$(\Lambda_2,1)$,
	
	$(\Lambda_1,-1)$
	&
	$\kappa_0 \not\in \{k/2,\, \lambda_5+1/2-k/2\mid \text{odd }k\}$ 
	
	$\kappa_1 + \kappa_m \not\in \left\{\tfrac{N-k+1}{p}\ \middle|\ 2 -k-\ell\equiv_m p \right\}$ 
	
	$|\kappa_1-\kappa_m| \not\in \left\{\tfrac{N-k+1}{p}\ \middle|\ 2-k-\ell \equiv_m 0 \right\}$ 
	&
	$\kappa_0 <1/2$ or $\kappa_0 >N-1/2$ 
	
	$\kappa_1 + \kappa_m< \min\left(\tfrac{N-k+1}{m}\ \middle|\ 2-k-\ell\equiv_m p\right)$
	
	$|\kappa_1-\kappa_m| < \min\left(\tfrac{N-k+1}{p}\ \middle|\ 2-k-\ell \equiv_m 0\right)$ \\
	\hline
	\end{array}
	\end{equation*}
		\begin{equation*}
\begin{array}{p{0.075\textwidth}|p{0.47\textwidth}|p{0.42\textwidth}}
\multicolumn{3}{l}{\text{Case III: even } N\hfil  \modif{\lambda = \lambda_6 = N/2+\kappa_0\delta} \hfill}\\ \hline\hline
$(\Lambda,\delta)$&\textrm{Irreducibility} & \textrm{Unitarity}\\
\hline
$(\Lambda_1,1)$,

$(\Lambda_2,-1)$
&
 $\kappa_0 \not\in \left\{ \tfrac{N-2k+1}{2} \ \middle|\ 2-k-\ell \not\equiv_m 0,p\right\}$

$\kappa_1 + \kappa_m \not\in \left\{\tfrac{|\lambda_6-k+1/2|}{p}\ \middle|\ 2 -k-\ell\equiv_m p \right\}$ 

$|\kappa_1-\kappa_m| \not\in \left\{\tfrac{|\lambda_6-k+1/2|}{p} \ \middle|\ 2-k-\ell \equiv_m 0 \right\}$
&  $\kappa_1 + \kappa_m< \min\left(\tfrac{|\lambda_6-k+1/2|}{p}\ \middle| \ 2-k-\ell\equiv_m p\right)$

$|\kappa_1-\kappa_m| < \min\left( \tfrac{|\lambda_6-k+1/2|}{p}\ \middle|\ 2-k-\ell \equiv_m 0\right)$
\\
\hline
$(\Lambda_2,1)$,

$(\Lambda_1,-1)$
&
$\kappa_0 \not\in \left\{\tfrac{k}{2},\, \tfrac{N-k+1}{4},\, \tfrac{2j-N-1}{2}\ \middle|\ \text{odd }k, \; 2-j-\ell \not\equiv_m 0,p\right\}$

$\kappa_1 + \kappa_m \not\in \left\{\tfrac{|\lambda_6-k+1/2|}{p}\ \middle|\ 2 -k-\ell\equiv_m p \right\}$

$|\kappa_1-\kappa_m| \not\in \left\{\tfrac{|\lambda_6-k+1/2|}{p}\ \middle|\ 2-k-\ell \equiv_m 0 \right\}$
&
$\kappa_0 <1/2$ or $\kappa_0 >N/2+1/2$

$\kappa_1 + \kappa_m< \min\left(\tfrac{|\lambda_6-k+1/2|}{p}\ \middle|  \ 2-k-\ell\equiv_m p\right)$

$|\kappa_1-\kappa_m| < \min \left(\tfrac{|\lambda_6-k+1/2|}{p} \ \middle|\ 2-k-\ell \equiv_m 0\right)$ \\
\hline
\end{array}
\end{equation*}
\end{theorem}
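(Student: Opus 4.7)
The plan is to construct every irreducible finite-dimensional representation of $\SymAlg$ by starting from an irreducible spin representation $V$ of $\dcover{W}$ sitting as the lowest $\sym{0}$-weight space $V_0$, applying the raising ladder $\Lad{+}$ from Proposition~\ref{prop:ladderdihedral} to build the remaining weight spaces, and reading off the central scalar $i\Lambda$ of $\sym{123}$. Since $\sym{123}$ is central and $[\sym{0},\Lad{\pm}] = \pm\Lad{\pm}$, Schur's lemma together with Lemma~\ref{lem:commWO0} allows me to diagonalise $\sym{0}$ and $\sym{123}$ simultaneously, and finite-dimensionality forces the $\sym{0}$-spectrum to be a chain $\lambda,\lambda+1,\ldots,\lambda+N$ with $V_0$ killed by $\Lad{-}$ and the top space $V_N$ killed by $\Lad{+}$. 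Since $\dsig{0}$ commutes with $\sym{0}$ while $\dsig{1}$ and $\dsig{m}$ exchange $V_0$ with $\Lad{+}^N V_0$, the space $V_0$ carries an action of the stabiliser subalgebra; after enforcing the constraint $z=-1$ coming from the realisation in $\Clif(3)\otimes\alg{A}$, Theorem~\ref{thm:irrepsW} identifies $V_0$ with a spin representation $Y_{2\ell+1}(-1,\delta)$ in the even case $m=2p$. The claim $\dim\stdquo = 2N+2$ then reflects $\dim V_0 = 2$ repeated across $N+1$ weights.

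The central step is to invoke the factorisations
\begin{align*}
    \Lad{+}\Lad{-} &= -\bigl((\sym{0}-1/2)^2 + (\sym{123}+T_0)^2\bigr)\bigl((\sym{0}-1/2)^2 - T_+T_-\bigr),\\
    \Lad{-}\Lad{+} &= -\bigl((\sym{0}+1/2)^2 + (\sym{123}-T_0)^2\bigr)\bigl((\sym{0}+1/2)^2 - T_-T_+\bigr)
\end{align*}
of Proposition~\ref{prop:ladderdihedral}. On $V_0$, $\Lad{+}\Lad{-} = 0$ becomes $(\lambda-1/2)^2 + (i\Lambda + T_0)^2 = 0$; since $T_0 = i\kappa_0\dsig{0}$ acts by $\pm i\kappa_0\delta$ on the two $\dsig{0}$-isotypes of $V$, solving this gives exactly the two families $\Lambda_1 \in \{\pm i(\lambda + 1/2 + \kappa_0\delta)\}$ and $\Lambda_2 \in \{\mp i(\lambda + 1/2 - \kappa_0\delta)\}$. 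On $V_N$, the relation $\Lad{-}\Lad{+} = 0$ yields $(\lambda + N + 1/2)^2 = T_-T_+|_{V_N}$, and I would evaluate the right-hand side by commuting $\Lad{+}^N$ past $T_\pm = -i(\kappa_1 T_\pm^1 + \kappa_m T_\pm^2)$ using the braiding relations~\eqref{eq:actionofreflonOpOmandLpm}--\eqref{eq:actiontauandTonL} and the explicit action of $\dcover{W}$ on $Y_{2\ell+1}(-1,\delta)$. The residue of $N+\ell$ modulo $m=2p$ dictates which of $T^1$, $T^2$ survives this computation: $N+\ell \equiv_m 1-p$ produces Case~I.i with $\lambda$ depending on $\kappa_1+\kappa_m$, $N+\ell \equiv_m 1$ produces Case~I.ii with $\kappa_1-\kappa_m$, the remaining residues give Case~II with $\lambda = N+1/2$, and even $N$ produces Case~III where $V_0$ and $V_N$ are isomorphic $\dcover{W}$-types so the $\kappa_0\delta$ shift reappears in $\lambda$.

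For each case, irreducibility is tested by computing the eigenvalue of $\Lad{+}\Lad{-}$ at every intermediate weight $\lambda + k$ with $1\le k \le N$: the module is reducible iff one of these eigenvalues vanishes on a nontrivial subspace, producing the ``not in set'' conditions on $\kappa_0,\kappa_1,\kappa_m$ in the irreducibility columns. Unitarity under the $*$-structure of Section~\ref{ssec:unitarity}, where $\Lad{\pm}^* = \Lad{\mp}$, requires $\Lad{+}\Lad{-}$ and $\Lad{-}\Lad{+}$ to have non-negative spectra at every weight: the factors $(\sym{0}\pm 1/2)^2 - T_\mp T_\pm$ yield the bounds on $\kappa_1+\kappa_m$ and $|\kappa_1-\kappa_m|$, while the sign of $(\sym{123}\pm T_0)^2$ on the relevant $\delta$-isotype produces the $\kappa_0 < 1/2$ or $\kappa_0 > \lambda$ condition. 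Exhaustiveness is immediate since any irreducible finite-dimensional representation has a lowest $\sym{0}$-weight space and the construction above recovers the full representation from the data $(V_0,\lambda,\Lambda)$. The main obstacle will be Step~2: in the even case, $T_+T_- = (\kappa_1 T_+^1 + \kappa_m T_+^2)(\kappa_1 T_-^1 + \kappa_m T_-^2)$ expands into four cross terms whose joint spectrum on each intermediate weight space depends delicately on the residue $(k+\ell)\bmod m$ and the parity of $N$, and the careful bookkeeping needed to extract the precise arithmetic conditions appearing in the tables is where the bulk of the technical work lies.
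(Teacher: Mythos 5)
Your high-level machinery is the right one, and it matches the paper's: use the factorisations of $\Lad{+}\Lad{-}$ and $\Lad{-}\Lad{+}$ from Proposition~\ref{prop:ladderdihedral} to impose boundary conditions at the extremities of the weight strings, read the form of $\Lambda$ off the first factor, split into Cases I.i/I.ii/II/III according to the residue of $N+\ell$ modulo $m$, test irreducibility by non-vanishing of the intermediate eigenvalues $A(k)$, and test unitarity by $A(k)>0$ via $\Lad{\pm}^*=\Lad{\mp}$. However, your structural setup contains a genuine error. You place the two-dimensional spin representation $V\simeq Y_{2\ell+1}(-1,\delta)$ as a single $\sym{0}$-weight space $V_0$ killed by $\Lad{-}$, and assert that the $\sym{0}$-spectrum is one chain $\lambda,\lambda+1,\dots,\lambda+N$ with two-dimensional weight spaces exchanged at the ends by $\dsig{1}$, $\dsig{m}$. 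This is incompatible with Lemma~\ref{lem:commWO0} and with~\eqref{eq:actionofreflonOpOmandLpm}: $\dsig{1}$ and $\dsig{m}$ \emph{anti}commute with $\sym{0}$ and conjugate $\Lad{\pm}$ into $-\Lad{\mp}$, so no $\sym{0}$-eigenspace with nonzero eigenvalue is $\dcover{W}$-stable, and the two basis vectors of $V$ are extremal in \emph{opposite} directions: if $\Lad{+}v_0^+=0$, then $\Lad{-}(\dsig{m}v_0^+)=0$, not $\Lad{-}v_0^+=0$. The correct picture (the paper's Lemma~\ref{lem:inducrep}) is two opposite strings $v_k^+=\Lad{-}^kv_0^+$ with weights $\lambda-k$ and $v_k^-=\dsig{m}v_k^+$ with weights $k-\lambda$, swapped by the group; the weight spaces are generically one-dimensional and there are $2N+2$ of them. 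Your picture would instead force a spectrum symmetric about zero, which is exactly the configuration the paper rules out in its Type~III analysis when $\kappa_0>0$.

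This is not a mere bookkeeping slip, because your derivation of the two families of $\Lambda$ relies on it and fails. If both $\dsig{0}$-isotypes of $V_0$ were killed by $\Lad{-}$, then evaluating the first factor of $\Lad{+}\Lad{-}$ with $T_0$ acting by $+i\kappa_0\delta$ on one isotype and by $-i\kappa_0\delta$ on the other yields the two simultaneous equations $(\lambda-1/2)^2+(\Lambda\pm i\kappa_0\delta)^2=0$ (writing $\Lambda$ for the scalar of $\sym{123}$); subtracting them gives $4i\Lambda\kappa_0\delta=0$, hence $\Lambda=0$ since $\kappa_0>0$, which contradicts the families stated in the theorem. In the paper, both families $\Lambda_1$ and $\Lambda_2$ arise from a \emph{single} extremal vector $v_0^+$ (on which $T_0$ has one definite sign) through the two branches of the square root of $(\lambda+1/2)^2+(\Lambda-i\kappa_0\delta)^2=0$; the condition at the other extremity is then imposed on $v_N^+$, a different weight, and the residual freedom in deciding which factor of the product vanishes at which end is absorbed by the renaming symmetry $(\lambda,\Lambda,V)\mapsto(N-\lambda,\Lambda,V')$ discussed around~\eqref{eq:otherv0}. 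The absence of a $\dcover{W}$-stable extremal weight space is precisely why the paper stresses that $\SymAlg$ has no triangular decomposition and works instead with the subgroup $\dcover{W}_0$ of elements commuting with $\sym{0}$; your argument needs to be rebuilt on that foundation before the case analysis and the extraction of the conditions on $\kappa_0$, $\kappa_1$, $\kappa_m$ can proceed.
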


\subsection{Preliminary general results and idea of the proofs}\label{ssec:proofidea}

The proofs are straightforward, but long. They are constructive: in doing them, all the \modif{finite-dimensional} representations are found, and the conditions are naturally derived from the constructions. The idea behind them is akin to the standard module construction, so the first step is to study representations of $\SymAlg$ by starting from an irreducible spin representation of the group $\dcover{W}$. \modif{Note that unlike a semisimple Lie algebra or a rational Cherednik algebra, the algebra $\SymAlg$ does not have a triangular decomposition because the action of the group $\dcover{W}$ interchanges $\Lad{+}$ and $\Lad{-}$, see equations~\eqref{eq:actionofreflonOpOmandLpm} and~\eqref{eq:actiontauandTonL}. However, let $\dcover{W}_0$ denote the subgroup of $\dcover{W}$ generated by the elements commuting with $\sym{0}$. Then the associative subalgebra of $\SymAlg$ generated by $\Lad{-}$, $\Lad{+}$, $\sym{0}$, $\sym{123}$ and $\dcover{W}_0$ does exhibit a triangular decomposition.} 

\modif{Let  $V$ be an irreducible $\dcover{W}$-representation. It decomposes into irreducible $\dcover{W}_0$-representations, and thus specifically for the case at hand, into two one-dimensional representations, see Appendix~\ref{app:doublecover}. The elements $\sym{0}$ and $\sym{123}$ commute with $\dcover{W}_0$ and they act thus by scalar multiplication on $\dcover{W}_0$-representations.}

\modif{From there, we use the triangular decomposition of the subalgebra generated by $\Lad{-}$, $\Lad{+}$, $\sym{0}$, $\sym{123}$ and $\dcover{W}_0$ and then work out the action of the rest of the symmetry algebra $\SymAlg$.}

\modif{We show that all the representations  $L_{\lambda,\Lambda}(V)$ of Theorems~\ref{thm:irrepsodd} and~\ref{thm:irrepseven} are obtained from the sets of eigenvectors given by Lemma~\ref{lem:inducrep}, and we give the restrictions on the function $\kappa$ by examining the action of the ladder operators on them. As we cover all the possible cases, a complete set of finite-dimensional irreducible representations of $\SymAlg$ is exhibited.}

\begin{lemma}\label{lem:inducrep}
	Let $\mathcal{V}$ be a finite-dimensional irreducible representation of $\SymAlg$. There exists a set of eigenvectors of $\sym{0}$ and $\sym{123}$ 
	\begin{equation}
		\mathcal{B} = \left\{ v_k^+, \, v_k^- \ \middle| \ 0\leq k \leq N\right\}
	\end{equation}
that generates $\mathcal{V}$\modif{,  with each pair $\langle v_k^-,\,v_k^+\rangle$ generating an irreducible spin representation of $\dcover{W}$ and $\Lad{+}v_0^+ =0$, $\Lad{-}v_N^+ =0$.}
\end{lemma}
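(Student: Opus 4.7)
The plan exploits two structural facts established earlier: the centrality of $\sym{123}$ in $\SymAlg$, and the fact that $\Lad{\pm}$ are genuine raising/lowering operators for $\sym{0}$ by~\eqref{eq:laddihedral}. The approach mirrors a standard-module construction: locate a highest-weight vector killed by $\Lad{+}$, build a chain by applying $\Lad{-}$, and produce the companion chain using a reflection that anticommutes with $\sym{0}$.

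First I would invoke Schur's lemma: since $\mathcal{V}$ is irreducible and finite-dimensional over $\CC$ and $\sym{123}$ is central, $\sym{123}$ acts as a scalar $\Lambda$. Finite-dimensionality also provides a joint eigenvector of $\sym{0}$; iterating $\Lad{+}$ must terminate because the $\sym{0}$-spectrum is finite and $\Lad{+}$ strictly raises the eigenvalue by $1$. The last nonzero iterate I call $v_0^+$, giving $\Lad{+} v_0^+ = 0$ and $\sym{0} v_0^+ = \lambda v_0^+$ for some $\lambda\in\CC$. Then set $v_k^+ := (\Lad{-})^k v_0^+$ and let $N$ be the largest index with $v_N^+\neq 0$. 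These are linearly independent $\sym{0}$-eigenvectors of distinct eigenvalues $\lambda-k$, and $\Lad{-} v_N^+=0$ by construction.

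Next, to obtain the companion vectors, I would pick a reflection in $\dcover{W}$ that anticommutes with $\sym{0}$, say $\dcover{\sigma}_1$ (see~\eqref{eq:actionofreflonOpOmandLpm}), and set $v_k^- := \dcover{\sigma}_1 v_k^+$. Each $v_k^-$ is a nonzero $\sym{0}$-eigenvector of weight $-(\lambda-k)$ and a $\sym{123}$-eigenvector of eigenvalue $\Lambda$. Using $\dcover{\sigma}_1^2=1$ and the known action of $\dcover{W}$ on $\sym{0}$, each pair $\langle v_k^-,v_k^+\rangle$ is a two-dimensional $\dcover{W}$-stable subspace; because the realisation sends the central element $z\in\dcover{W}$ to $-\mathrm{id}$, the pair is a spin representation. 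Irreducibility as a $\dcover{W}$-module follows from the two distinct $\sym{0}$-weights on the pair, which prevents any splitting as one-dimensional $\dcover{W}_0$-characters.

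Finally, I would show that the span $\mathcal{U} := \vspn\mathcal{B}$ is stable under every generator of $\SymAlg$, so that irreducibility of $\mathcal{V}$ forces $\mathcal{U}=\mathcal{V}$. Invariance under $\sym{0}$, $\sym{123}$, and $\dcover{W}$ is built in. Invariance under $\Lad{-}$ on the $+$-chain is immediate; on the $-$-chain it follows from the twist $\dcover{\sigma}_1\Lad{-}=-\zeta^{-2}\Lad{+}\dcover{\sigma}_1$ of~\eqref{eq:actionofreflonOpOmandLpm}, reducing the question to the $+$-chain. The main obstacle is invariance under $\sym{+}$ and $\sym{-}$ individually: the correction terms in~\eqref{eq:commrelO0OpOm} mean these are not merely ladder operators. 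I would exploit the identity $2\Lad{+}v = \sym{0}\sym{+}v + \sym{+}\sym{0}v$ combined with~\eqref{eq:commrelO0OpOm} to obtain, for any $\sym{0}$-eigenvector $v$ of weight $\mu$, a relation of the form $(2\mu + 1)\sym{+}v = 2\Lad{+}v - (2\sym{123}T_+ + \comm{T_0,T_+})v$, whose right-hand side manifestly lies in $\mathcal{U}$ since $T_0,T_\pm\in\CC[\dcover{W}]$ preserve each pair. Hence $\sym{+}v\in\mathcal{U}$ whenever $\mu\neq -1/2$, and a parallel identity handles $\sym{-}$. The exceptional weight $\mu=-1/2$ is then addressed using the factorisations~\eqref{eq:factoLpLm} and~\eqref{eq:factoLmLp} of Proposition~\ref{prop:ladderdihedral}, which relate $\sym{+}\sym{-}$ and $\sym{-}\sym{+}$ to elements of $\CC[\dcover{W}]$ and polynomials in $\sym{0}$, $\sym{123}$, all of which already preserve $\mathcal{U}$.
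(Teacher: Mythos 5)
Your construction of the companion chain has a genuine gap: $\dcover{W}$-stability of the pairs. You take the starting vector to be merely an $\sym{0}$-eigenvector (eigenvector-ness for $\sym{123}$ being automatic once Schur gives a scalar), and then set $v_k^- := \dcover{\sigma}_1 v_k^+$. The reflection $\dcover{\sigma}_1$ does swap the two vectors of a pair, but the remaining generators of $\dcover{W}$ need not preserve it: $\dsig{0}$ and $\dtau = \dsig{1}\dsig{m}$ commute with $\sym{0}$, so all you can conclude is that they preserve the full $\sym{0}$-eigenspace of weight $\lambda-k$, and that eigenspace can be strictly larger than $\CC v_k^+$ (the paper notes, right after the lemma, that some $v_j^-$ may share its $\sym{0}$-eigenvalue with some $v_k^+$). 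Hence the claim that each $\langle v_k^-, v_k^+\rangle$ is a two-dimensional $\dcover{W}$-stable subspace, let alone an irreducible spin representation, does not follow; and your spanning argument inherits the gap, since it needs $T_0, T_{\pm}\in\CC[\dcover{W}]$ to preserve $\mathcal{U}=\vspn\mathcal{B}$. The paper's proof is engineered precisely to avoid this: it first decomposes $\mathcal{V}$ into irreducible spin $\dcover{W}$-representations (Maschke plus Theorem~\ref{thm:irrepsW}: all two-dimensional), then into one-dimensional representations of the subgroup $\dcover{W}_0$ of elements commuting with $\sym{0}$, and takes $v_0^+$ to be a $\dcover{W}_0$-eigenvector, i.e.\ a joint eigenvector of $\dsig{0}$, $\dtau$ and $\sym{0}$. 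Since the ladder operators anticommute with $\dsig{0}$ and commute with $\dtau$ up to the scalars $\zeta^{\mp 2}$ by~\eqref{eq:actionofreflonOpOmandLpm} and~\eqref{eq:actiontauandTonL}, every $v_k^{+}$ remains a $\dcover{W}_0$-eigenvector and the pairs really are $\dcover{W}$-submodules. Your argument is repaired the same way: diagonalize the commuting family $\{\sym{0},\dsig{0},\dtau\}$ simultaneously before running the ladder.

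Two further points. First, your treatment of the exceptional weight $\mu=-1/2$ does not work as stated: knowing that the products $\sym{+}\sym{-}$ and $\sym{-}\sym{+}$ preserve $\mathcal{U}$ (via Corollary~\ref{coro:sqO123}) says nothing about where $\sym{+}$ alone sends a weight-$(-1/2)$ vector, and your identity $(2\mu+1)\sym{+}v = 2\Lad{+}v - \bigl(2\sym{123}T_+ + \comm{T_0,T_+}\bigr)v$ degenerates at $\mu=-1/2$ to a relation containing no information about $\sym{+}v$. In the paper this configuration is not finessed but eliminated: in the proof of Theorem~\ref{thm:irrepseven} (Case III, odd $N$) its existence is shown to contradict the relation for $\comm{\sym{0},\sym{-}}$, so no representation contains such a weight. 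Second, your irreducibility argument for a pair via ``two distinct $\sym{0}$-weights'' fails when the weight is $0$; the robust argument is that a spin representation ($z$ acting as $-\mathrm{id}$) admits no one-dimensional subrepresentations, $z$ being a commutator in $\dcover{W}$ — equivalently, invoke the classification of Theorem~\ref{thm:irrepsW}. Note finally that the paper's lemma justifies ``generates'' simply by irreducibility; the spanning statement you attempt is established only later, in the proofs of Theorems~\ref{thm:irrepsodd} and~\ref{thm:irrepseven}, where the actions of $\sym{\pm}$ on $\mathcal{B}$ are computed explicitly.
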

\begin{proof}
\modif{We begin by decomposing $\mathcal{V}$ into irreducible spin representations of $\dcover{W}$ and exhibit an $\sym{0}$- and $\sym{123}$-eigenvector $v_0^+$ from the further decomposition into $\dcover{W}_0$-representations that satisfies the condition of the lemma . We then show that putting $v_k^+:= \Lad{-}^kv_0^+$ and $v_k^- := \dsig{m}v_k^+$ in the set $\mathcal{B}$ proves the lemma.}

As $\mathcal{V}$ is a $\SymAlg$-representation, it is also a $\dcover{W}$-representation. Furthermore, in its realisation in $\SymAlg$, a representation of $\dcover{W}$ must be a spin representation: abstractly $\dcover{W}$ has a commuting element $z$ that acts as $-1$ or $+1$ on the representation, but the realisation forces $z$ to act as $-1$.   By Maschke's Theorem, the $\dcover{W}$-representation $\mathcal{V}$ is expressible as a direct sum of irreducible spin representations of  $\dcover{W}$. From Theorem~\ref{thm:irrepsW}, all the irreducible spin representation of $\dcover{W}$ are two-dimensional. 

\modif{Each of the irreducible spin representations further decomposes as the sum of two one-dimensional irreducible $\dcover{W}_0$-representations. Since $\sym{0}$ and $\sym{123}$ commute with $\dcover{W}_0$ they act as multiple of the identity on the generators of the $\dcover{W}_0$-representations. Let $v$ be any such generator. The element $\dsig{m}v$ generates another $\dcover{W}_0$-representation and the pair $(\dsig{m} v, v)$ generates an irreducible spin $\dcover{W}$-representation. As $\Lad{+}$ and $\Lad{-}$ form a pair of ladder operators with respect to $\sym{0}$, we have that 
\begin{equation}
\sym{0}\Lad{\pm}^kv = \left(\comm{\sym{0},\Lad{\pm}^k} + \Lad{\pm}^k \sym{0}\right)v = (\pm k \Lad{\pm}^k + \Lad{\pm}^k\sym{0})v.
\end{equation}
Hence, $\Lad{+}^kv$ and $\Lad{-}^kv$ are also eigenvectors of $\sym{0}$, with their eigenvalues respectively raised or lowered by $k$. Since $\mathcal{V}$ is finite-dimensional, one of the generators in the $\dcover{W}_0$-decomposition must be annihilated by $\Lad{+}$; denote it by $v_0^+$. Let $\lambda$ and $\Lambda$ be the eigenvalues for $\sym{0}$ and $\sym{123}$ of this element
\begin{align}
 \sym{0} v_0^+ &= \lambda v_0^+, & \sym{123}v_0^+ &= \Lambda v_0^+.
 \end{align}
 Applying $\Lad{-}$ lowers the eigenvalue and changes the $\dcover{W}_0$-representation. In particular, $v_k^+ = \Lad{-}v_0^+$ is an eigenvector of $\sym{0}$ of eigenvalue $\lambda - k$
 \begin{equation}
 	\sym{0}v_k^+ = (- k \Lad{-}^k + \Lad{-}^k\sym{0})v_0^+ = (\lambda-k)v_k^+.
 \end{equation}
}

 \modif{As all the eigenvalues are distinct, there must be a $N$ such that $\Lad{-}v_N^+=0$ since the representation is finite-dimensional. Hence, we have exhibited two elements $v_0^+$ and $v_N^+$ satisfying}
\begin{equation}\label{eq:highestweight}
 \Lad{+}v_0^+ = 0, \qquad  \modif{\Lad{-}v_N^+ = \Lad{-}^{N+1} v_0^+ =0.}
\end{equation}

Furthermore,  $v_k^- = \dsig{m} v_k^+$ is also an eigenvector of $\sym{0}$ \modif{and $\sym{123}$}, indeed
\begin{align}\label{eq:O0eigenval}
\sym{0} v_k^- &= -\dsig{m}\sym{0} v_k^+ = -(\lambda - k)\dsig{m}v_k^+ = (k-\lambda)v_k^{-}, & \modif{\sym{123}v_k^-} &= \modif{\dsig{m}\sym{123}v_k^+ = \Lambda v_k^-.}
\end{align}

There is thus a  set of eigenvectors of $\sym{123}$ and $\sym{0}$
\begin{equation}
\mathcal{B} = \{v_k^+ \mid k=0,\dots, N\} \cup \{v_k^- \mid k =0, \dots , N\}.
\end{equation}
It is a generating set of $\mathcal{V}$ because $\mathcal{V}$ is irreducible.
\end{proof}

Remark that the eigenvectors $v_k^+$ all have distinct $\sym{0}$-eigenvalues; it is however possible that $v_j^-$ has the same eigenvalue as one $v_k^+$. Note also that $\Lad{+}^kv_0^- = (-1)^k v_k^-$ and so we can also write $v_k^- = (-1)^k \Lad{+}^kv_0^-$.
 
\modif{The proof of the lemma gives for each finite-dimensional $\SymAlg$-representation a set of data: two eigenvalues $\lambda, \Lambda$ and a spin $\dcover{W}$-representation $V=\langle v_0^+,v_0^-\rangle$. Hence, we denote irreducible representations of $\SymAlg$ by $\stdquo$.
}

\modif{Note however that the lemma does not impose a unique choice of  data $(\lambda,\Lambda,V)$ to identify the representation $\mathcal{V}$. Indeed, we see immediately from~\eqref{eq:O0eigenval} that there could have been another choice for $v_0^+$ and $v_N^+$ since
\begin{equation}\label{eq:otherv0}
	\begin{aligned}
		\Lad{+}v_N^- &= 0, & \Lad{-}v_0^- &=0,
    \end{aligned}
\end{equation}
and the two sets of data $(\lambda,\Lambda,V)$ and $(N-\lambda,\Lambda, V'=\langle v_N^-,v_N^+\rangle)$ refer to the same $\SymAlg$-representation. This is taken into account into the classification, see the cases to solve the system~\eqref{eq:repevencond}. }

Furthermore on \modif{ a representation $\stdquo$}, we define unitarity with respect to the $*$-structure from Section~\ref{ssec:unitarity}. We define abstractly on $\modif{\stdquo}$ a sesquilinear form
\begin{equation}
	\bili{-}{-}: \modif{\stdquo} \times \modif{\stdquo} \to \CC
\end{equation}
\modif{extending the unitary structure of $V$ normalized by  $\bili{v_0^+}{v_0^+} = 1,$} such that, for all $X\in \SymAlg$ and $v,w\in \modif{\stdquo}$,
\begin{equation}
	\bili{Xv}{w} = \bili{v}{X^*w}.
\end{equation}

The next lemma gives a condition on unitarity assuming a specific form for the action of $\Lad{+}$. (It will be proven below that indeed $\Lad{+}$ acts like this.)
\begin{lemma}[Unitarity condition]\label{lem:unitaritycond}
	If $\Lad{+}$ acts on $v_k^+$ as	$\Lad{+}v_k^+ = A(k)v_{k-1}^+$, for certain constants $A(k)$, then $\modif{\stdquo}$ is unitary when $A(k)>0$ for $1\leq k \leq N$.
\end{lemma}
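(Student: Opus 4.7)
The plan is to exploit the $*$-structure from Section~\ref{ssec:unitarity}, in particular the relations $\Lad{\pm}^* = \Lad{\mp}$, $\sym{0}^* = \sym{0}$, and $\dsig{m}^* = \dsig{m}$, together with the recursive definition $v_k^+ = \Lad{-}^k v_0^+$ (which follows from the proof of Lemma~\ref{lem:inducrep}) to compute all pairwise inner products of the generating set $\mathcal{B}$ and show that, under the hypothesis $A(k) > 0$, the form $\langle -,- \rangle$ is positive definite.

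First I would establish orthogonality among the elements of $\mathcal{B}$. Since $\sym{0}^* = \sym{0}$, eigenvectors with distinct $\sym{0}$-eigenvalues are automatically orthogonal. Using~\eqref{eq:O0eigenval}, the vectors $v_k^+$ and $v_k^-$ carry eigenvalues $\lambda - k$ and $k - \lambda$ respectively, so this already separates most pairs. The remaining case, coincidence $\lambda - j = k - \lambda$, is handled by choosing a suitable self-adjoint element of $\dcover{W}_0$ (for instance $\dsig{0}$, or $\dtau + \dtau^{-1}$) whose eigenvalue distinguishes the two characters of $\dcover{W}_0$ carried by $v_k^+$ and $v_k^-$; this uses the fact that the initial unitary structure on $V$ makes $v_0^+$ and $v_0^-$ orthogonal, and the propagation to higher $k$ follows from the $\dcover{W}$-equivariance of the action of $\Lad{-}$.

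Next I would compute the norms recursively. Writing $v_k^+ = \Lad{-} v_{k-1}^+$ and using $\Lad{-}^* = \Lad{+}$,
\begin{equation*}
\bili{v_k^+}{v_k^+} = \bili{\Lad{-} v_{k-1}^+}{v_k^+} = \bili{v_{k-1}^+}{\Lad{+} v_k^+} = A(k)\, \bili{v_{k-1}^+}{v_{k-1}^+},
\end{equation*}
so by induction with base case $\bili{v_0^+}{v_0^+} = 1$, one obtains $\bili{v_k^+}{v_k^+} = \prod_{j=1}^{k} A(j)$. For the $v_k^-$ row, the relation $v_k^- = \dsig{m} v_k^+$ together with $\dsig{m}^* = \dsig{m}$ and $\dsig{m}^2 = 1$ in $\SymAlg$ gives $\bili{v_k^-}{v_k^-} = \bili{v_k^+}{v_k^+}$. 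Consequently all diagonal entries equal the same product $\prod_{j=1}^{k} A(j)$, which is positive precisely when every $A(k) > 0$.

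The main obstacle I expect is the orthogonality step in the coincidence case $j + k = 2\lambda$: one must verify that the element of $\dcover{W}_0$ used to separate $v_k^\pm$ acts through distinct \emph{real} eigenvalues (so that $*$-Hermiticity forces orthogonality), which requires a small case distinction depending on the spin representation $V$ listed in Theorem~\ref{thm:irrepsW}. Once this is dispatched, the sesquilinear form is diagonal in $\mathcal{B}$ with positive diagonal entries, hence defines a genuine Hermitian inner product on $\stdquo$, and the irreducible representation is unitary.
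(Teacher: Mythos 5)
Your proposal is correct and follows essentially the same route as the paper's proof: orthogonality of the generating set $\mathcal{B}$ from self-adjointness of $\sym{0}$ (plus the group elements), then the telescoping recursion $h_k = \bili{\Lad{-}v_{k-1}^+}{v_k^+} = \bili{v_{k-1}^+}{\Lad{+}v_k^+} = A(k)h_{k-1}$ with $h_0 = 1$, so $A(k)>0$ for all $k$ yields positive definiteness. The paper's own proof is in fact terser than yours: it simply asserts the relations $\bili{v_k^+}{v_l^+} = h_k\delta_{k,l} = \bili{v_k^-}{v_l^-}$ and $\bili{v_k^+}{v_l^-} = 0$ from $\sym{0}^* = \sym{0}$ and the definition of the $v_k^+$, leaving the eigenvalue-coincidence case you carefully address (separation by self-adjoint elements of $\dcover{W}_0$, with the residual degenerate configurations ruled out by the $A(k)\neq 0$ irreducibility analysis of Section~\ref{ssec:proofeven}) entirely implicit.
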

\begin{proof}
	As $\sym{0}^* = \sym{0}$, and because of the definition of $v_k^+$,
\begin{gather*}
	\bili{v_k^+}{v_{l}^+} = h_k \delta_{k,\, l} = \bili{v_k^-}{v_l^-},\\
	\bili{v_k^+}{v_l^-} = 0, \quad \bili{v_0^+}{v_0^+} = h_0 = 1.
\end{gather*}
The fact that $\Lad{\pm}^* = \Lad{\mp}$ gives a recursive structure for the $h_k$ linked with $A(k)$:
\begin{align*}
	h_k:= \bili{v_k^+}{v_k^+} =  \bili{\Lad{-}v_{k-1}^+}{v_k^+}= \bili{v_{k-1}^+}{\Lad{+}v_k^+} = \bili{v_{k-1}^+}{A(k)v_{k-1}^+} = A(k)h_{k-1}.
\end{align*}
Therefore, to have an inner product and unitarity, it must be that $A(k)>0$ for $1\leq k \leq N$.
\end{proof}

Assume that $\Lad{+}v_k^+ = A(k)v_{k-1}^+$ and $\Lad{-}v_k^- = A(k)v_{k-1}^-$ for certain $A(k)$ (as will be proved later). For both even and odd case, the proof of Lemma~\ref{lem:unitaritycond} indicates that there is an orthonormal basis, provided the representation is unitary.

\subsection{Proof of Theorem~\ref{thm:irrepseven}}\label{ssec:proofeven}

We prove that the set $\mathcal{B}$ of Lemma~\ref{lem:inducrep} is a basis of a \modif{$(2N+2)$-dimensional} irreducible representation of $\SymAlg$ characterized by two constants $\Lambda$ and $\lambda$ under the conditions on $\kappa$ of Theorem~\ref{thm:irrepseven}.

Let $m=2p$, \modif{$N\in \NN$}, $\ell\in\{0,1,\dots,p-1\}$ and $\delta \in \{-1,+1\}$. Put $V=Y_{2\ell+1}(\delta)$ an irreducible spin representation of $\dcover{W}$. \modif{By Lemma~\ref{lem:inducrep} and the discussion following it, consider the $\SymAlg$-representation  $\stdquo$} with its generating set of $2N+2$ eigenvectors of $\sym{0}$ and $\sym{123}$
\begin{equation}
 \mathcal{B} = \{v_k^{+} := \Lad{+}^k v_0^+,\ v_k^-:= \dsig{m}v_k^+\mid k = 0, \dots , N\}.
\end{equation}
The representation $V$ is generated by $v_0^+$ and $v_0^-$ and the $\sym{0}$- and $\sym{123}$-eigenvalues on $v_k^{\pm}$ are
\begin{align}
	\sym{0}v_k^{\pm} &= \pm(\lambda -k)v_k^{\pm}, & \sym{123}v_k^{\pm} &= \Lambda v_k^{\pm}.
\end{align}

The actions of the group elements on $v_k^{\pm}$ are given below. Recall that $\zeta = e^{\pi i /m} $:
\begin{align*}
\dtau v_k^+ &=  \zeta^{2k} \Lad{-}^k \dtau v_0^+ = \zeta^{2(k+\ell)+1} v_k^+,& \dtau v_k^- &= \zeta^{-2(k+\ell)-1} v_k^-,\\
\dsig{0}v_k^+ &= (-1)^k \Lad{-}^k\dsig{0} v_0^+ = (-1)^k \delta v_k^+,&\dsig{0} v_k^- &=  (-1)^{2k} \Lad{+}^k \dsig{0}v_0^- = (-1)^{k+1} \delta v_k^-,\\
\dsig{m} v_k^+ &=  (-1)^k  \Lad{+}^k \dsig{m} v_0^+ = v_k^-, & \dsig{m} v_k^- &= v_k^+.
\end{align*}

Recall that $T_0 = i\kappa_0\dsig{0}$ and so its action on $v_k^{\pm}$ is simply $T_0 v_k^{\pm} = \pm i(-1)^k \kappa_0 \delta v_k^{\pm}$. The actions of $T_+$ and $T_-$ are obtained from $\dsig{j} = (-1)^{j+1}\dtau^j\dsig{m}$ and equation~\eqref{eq:Tpmeven}, because $m$ is even:
\begin{align*}
T_{\pm} v_k^{+} &= \mp i(\kappa_1 T_{\pm}^1 + \kappa_m T_{\pm}^2)v_k^{+}, & 
T_{\pm} v_k^{-} &= \mp i(\kappa_1 T_{\pm}^1 + \kappa_m T_{\pm}^2)v_k^{-}.
\end{align*} 
On the odd root components $T_+^1$ and $T_-^1$, the action is given by
\begin{align*}
T_+^1 v_k^{\pm} &= \sum_{j=1}^p \zeta^{2j-1} \dsig{2j-1} v_k^{\pm} 
= \sum_{j=1}^{p} \zeta^{2j-1}(-1)^{2j} \dtau^{2j-1} \dsig{m} v_k^{\pm}
= \sum_{j=1}^p \zeta^{(2j-1)(1\mp 1 \mp 2(k+\ell) )} v_k^{\mp},\\
T_-^1 v_k^{\pm} &=  \sum_{j=1}^p \zeta^{(2j-1)(-1\mp 1 \mp 2(k+\ell) )} v_k^{\mp},
\intertext{and on the even roots components $T_+^2$ and $T_-^2$, by}
T_+^2 v_k^{\pm} &= \sum_{j=1}^p \zeta^{2j+1} \dsig{2j} v_k^{\pm} = \sum_{j=1}^{p} (-1)^{2j} \zeta^{2j} \dtau^{2j} \dsig{m} v_{k}^{\pm}
= -\sum_{j=1}^p \zeta^{2j(1\mp 1 \mp 2(k+\ell))} v_k^{\mp},\\
T_-^2 v_k^{\pm} &= -\sum_{j=1}^p \zeta^{2j(-1\mp 1 \mp 2(k+\ell))} v_k^{\mp}.
\end{align*}

Define 
\begin{align}
G_{\kappa}(X) &:= p( \kappa_1 1_m'(X) - \kappa_m 1_p(X)); & 1_p(X) 
&:= 
\begin{cases}
1, & X\equiv_p 0;\\
0, & \text{else};
\end{cases}
& 1'_m(X) 
&:= 
\begin{cases}
-1, & X\equiv_m p;\\
1, & X \equiv_m 0;\\
0, & \text{else}.
\end{cases}
\end{align}
The actions of $T_{+}$ and $T_-$ on $v_k^{-}$ and $v_k^+$ are then expressed with this shorthand notation as
\begin{align}
	T_{+} v_k^- &= -i G_{\kappa}(1-k-\ell) v_k^+,  & T_{+} v_k^+ &= -iG_{\kappa}(k+\ell)v_k^-,\\
	T_- v_k^- &= i G_{\kappa}(k+\ell)v_k^+, & T_-v_k^+ &= i G_{\kappa}(1-k-\ell) v_k^-.
\end{align}

For ease of notation, define
\begin{equation}
\begin{aligned}
        H_{\kappa}(x):= p^2((\kappa_1^2 + \kappa_m^2)1_p(x) - 2\kappa_1\kappa_m 1_m'(x))
        &= \begin{cases}
           	p^2(\kappa_1+\kappa_m)^2,& x \equiv_m p;\\
           	p^2(\kappa_1- \kappa_m)^2, & x\equiv_m 0;\\
           	0, & \text{else.}
           \end{cases}
\end{aligned}
\end{equation}
The actions of $T_+T_-$ and $T_-T_+$ on $v_k^+$ and $v_k^-$ are given below
\begin{align}
	T_-T_+ v_k^+ &= H_{\kappa}(k+\ell) v_k^+, &
	T_-T_+ v_k^- &= H_{\kappa}(1-k-\ell) v_k^-,\\
	T_+T_- v_k^+ &= H_{\kappa}(1-k-\ell) v_k^+,&
	T_+T_- v_k^- &= H_{\kappa}(k+\ell) v_k^-.
\end{align}

We now employ the factorisations~\eqref{eq:factoLpLm} and~\eqref{eq:factoLmLp} to get conditions on the actions of $\Lad{+}$ and $\Lad{-}$ on the vectors $v_k^+$ and $v_k^-$:
\begin{align*}
	\Lad{+}v_k^+ &= \Lad{+}\Lad{-}v_{k-1}^+ = -\left((\sym0 - 1/2)^2 + (\sym{123}+i\sym3)^2\right)\left((\sym0 -1/2)^2 - T_+T_-\right)v_{k-1}^+ \\
	&= -\left( \left( (\lambda - k + 1/2)^2 + (\Lambda + (-1)^{k-1}i \kappa_0\delta)^2\right) \left((\lambda - k + 1/2)^2 - H_{\kappa}(2-(k+\ell))\right)  \right)v_{k-1}^+,
	\intertext{and}
	\Lad{-}v_k^- &= -\Lad{-}\Lad{+}v_{k-1}^- = \left((\sym0 + 1/2)^2 + (\sym{123}-i\sym3)^2\right)\left((\sym0 + 1/2)^2 - T_-T_+\right) v_{k-1}^-\\
	&=\left( \left(( k -\lambda - 1/2 )^2 + (\Lambda + (-1)^{k-1} i\kappa_0 \delta)^2\right)\left((k -\lambda - 1/2)^2 - H_{\kappa}(2-(k+\ell))\right) \right) v_{k-1}^-.
\end{align*}
Put 
\begin{gather}
	A(k) := -\underbrace{\big((\lambda - k + 1/2)^2 + (\Lambda - (-1)^{k}i \kappa_0\delta)^2\big)}_{:=A_{(1)}(k)}\underbrace{\big((\lambda - k + 1/2)^2 - H_{\kappa}(2-k-\ell)\big)}_{:=A_{(2)}(k)}.
\end{gather}
So the actions are simply 
\begin{align}
	\Lad{+}v_k^+ &= A(k) v_{k-1}^{+}, & \Lad{-}v_k^- &= -A(k) v_{k-1}^ {-},\\
	\Lad{+}\Lad{-}v_k^- &= A(k)v_k^-, & \Lad{-}\Lad{+}v_k^+ &= A(k) v_k^+. 
\end{align}
The actions of $\sym{+}$ and $\sym{-}$ follow:
	\begin{align}
		\sym{-}v_k^+ &= {v_{k+1}^+ - 2i(\Lambda - (-1)^{k+1}i \kappa_0\delta) G_{\kappa}(1-k-\ell)v_k^- \over \lambda - k -1/2},& \lambda &\neq k+1/2, \label{eq:Omvpeven}\\
		\sym{-} v_k^- &= {-A(k)v_{k-1}^- - 2i(\Lambda - i(-1)^k \kappa_0 \delta)G_{\kappa}(k+\ell)v_k^+ \over k-\lambda - 1/2}, & \lambda &\neq 1/2-k,\label{eq:Omvmeven}\\
		\sym{+}v_k^+ &= { A(k)v_{k-1}^+ + 2i(\Lambda +i(-1)^{k+1} \kappa_0 \delta) G_{\kappa}(k+\ell)v_k^-\over \lambda -k +1/2}, & \lambda &\neq k-1/2,\label{eq:Opvpeven}\\
		\sym{+}v_k^- &= { -v_{k+1}^- + 2i(\Lambda + i (-1)^k \kappa_0 \delta) G_{\kappa}(1-k-\ell) v_k^+\over k-\lambda + 1/2}, & \lambda &\neq -k-1/2.\label{eq:Opvmeven}
	\end{align}

The system to solve for the irreducibility of $\modif{\stdquo}$ is
\begin{equation}\label{eq:repevencond}
	\begin{cases}
    \left( (\lambda  + 1/2)^2 + (\Lambda  - i \kappa_0\delta)^2\right) \left((\lambda  + 1/2)^2 - H_{\kappa}(2-\ell)\right) = 0,\\
	\ \left(( \lambda -N  - 1/2 )^2 + (\Lambda + (-1)^{N} i\kappa_0 \delta)^2\right)\left((\lambda - N - 1/2)^2 - H_{\kappa}(1-N-\ell)\right)= 0,\\
	A(k) \neq 0, \quad 1\leq k \leq N.
	\end{cases}
\end{equation}

The values of $\ell$ and $N$ influence the value of $H_{\kappa}(1-N-\ell)$ and justify the division in the following types. All other ways to solve the first two equations are equivalent to one of these by a renaming of the generators $v_k^{\pm}$\modif{, see the discussion around equation~\eqref{eq:otherv0}}.%
\begin{enumerate}
	\item[\ref{sssec:Iieven}] \textbf{Type I.i}: $1-N -\ell \equiv_m p$; $(\lambda + 1/2)^2 = -(\Lambda - i \kappa_0 \delta)^2$, and \modif{$(\lambda - N - 1/2)^2 = H_{\kappa}(1-N-\ell)$}.
	\item[\ref{sssec:Iiieven}]\textbf{Type I.ii}: $1-N -\ell \equiv_m 0$; $(\lambda + 1/2)^2 = -(\Lambda - i \kappa_0 \delta)^2$, and \modif{$(\lambda - N -  1/2)^2 = H_{\kappa}(1-N-\ell)$}.
	\item[\ref{sssec:IIeven}]\textbf{Type II}: $H_{\kappa}(1-N -\ell) =0$; $(\lambda + 1/2)^2 = -(\Lambda - i \kappa_0\delta)^2$, and $N-\lambda + 1/2 = 0$.
	\item[\ref{sssec:IIIeven}] \textbf{Type III}: $(\lambda +1/2)^2 = -(\Lambda - i\kappa_0 \delta)^2$, and $(\lambda - N - 1/2)^2 = -(\Lambda + (-1)^N i\kappa_0 \delta)^2$.
\end{enumerate}
The choice of these specific types is simply to normalize the expressions of $\Lambda$ as either 
\begin{align}
 \Lambda_1 &= i(\lambda + 1/2 +\kappa_0\delta), & &\text{or} & 	\Lambda_2 &= -i(\lambda +1/2 - \kappa_0\delta).
 \end{align}
 The two possibilities exist for $\Lambda$, but the representations are the same under the switch $(\Lambda_1, \delta) \to (-\Lambda_2,-\delta)$ so we will always only consider $\Lambda_1 = i(\lambda+1/2+\kappa_0\delta)$.

\subsubsection{Cases of type I.i} \label{sssec:Iieven}

The condition on $H_{\kappa}(1-N-\ell)$ is equivalent to $N+\ell \equiv_m 1-p$. 

There are two possible values for $\lambda$:
\begin{equation}
	\lambda_1 = N+1/2 + (\kappa_1 +\kappa_m)p, \qquad \lambda_2 = N+1/2 - (\kappa_1+\kappa_m)p.
\end{equation}

\paragraph{\emph{First option:} $\lambda_1 = N+1/2 + (\kappa_1+\kappa_m)p$}  All the eigenvectors $v_k^{\pm}$ have different eigenvalues. The condition $A(k)\neq 0$ is achieved with only some conditions on $\kappa_0$. Indeed, the  positivity conditions $\kappa_1, \kappa_m >0$ implies that $(\kappa_1 + \kappa_m)^2 > (\kappa_1 - \kappa_m)^2$ and thus the second factor $A_{(2)}(k)$ of $A(k)$ is always positive:
\begin{equation}
	A_{(2)}(k) = (\lambda_1 -k +1/2)^2 - H_{\kappa}(2-k-\ell) = (N-k+1 + (\kappa_1+\kappa_m)p)^2 - H_{\kappa}(2-k-\ell) >0.
\end{equation}
The first factor $A_{(1)}(k) = (\lambda_1 - k +1/2)^2 + (\Lambda + (-1)^{k+1} i \kappa_0 \delta)^2$ is always negative for even $k$, but it can be zero if $\kappa_0 = -k/(2\delta)$ or $\kappa_0 = -(2\lambda - k + 1)/(2\delta)$ for odd $k$.

When $\delta =1$, it will be unitary without restriction. When $\delta = -1$, it will be unitary if $\kappa_0 < 1/2$ or $\kappa_0 > \lambda_1$.

\paragraph{\emph{Second option:} $\lambda_2 = N+1/2 - (\kappa_1 + \kappa_m)p$.} As
\begin{align*}
	\sym{0} v_k^+ &= (N-k+/2 - (\kappa_1 + \kappa_m)p)v_k^+, & \sym{0} v_j^- &= j - N-1/2 + (\kappa_1 + \kappa_m)p,
\end{align*}
then $v_k^+$ and $v_j^-$ will have the same $\sym{0}$-eigenvalue when $\kappa_1 + \kappa_2 = (2N +1 - k -j)/2p$, values for which some $A(k)=0$.

In addition to the conditions on $\kappa_0$, some conditions on $\kappa_1$ and $\kappa_m$ appear from the irreducibility condition $A(k)\neq 0$. The factor $A_{(1)}(k)$ is  not zero as long as $\kappa_0 \neq -k/2\delta$ or $\kappa_0\neq -(N-k+1/2)/\delta$ for odd $k$, but it might be the case that the second factor $A_{(2)}(k)$ becomes $0$.

For $A_{(2)}(k)=0$, then the following equation must hold
\begin{align*}
	(\lambda_2 -k +1/2)^2 - H_{\kappa}(2-k-\ell) = (N-k +1 - (\kappa_1+\kappa_m)p)^2 - H_{\kappa}(2-k-\ell) = 0.
\end{align*}
According to the value of $k$, we get to solve
\begin{equation}
	\begin{cases}
	N-k+1 - (\kappa_1+\kappa_m)p = \pm(\kappa_1-\kappa_m)p, & 2-k-\ell \equiv_m 0;\\
	N-k+1 - (\kappa_1+\kappa_m)p = \pm(\kappa_1+\kappa_m)p, & 2-k-\ell \equiv_m p;\\
	N-k+1 - (\kappa_1+\kappa_m)p = 0, & \text{else};\\	
	\end{cases}
\end{equation}
and so $\modif{\stdquo}$ is not irreducible  when
\begin{equation}
	\begin{cases}
	\kappa_1 = (N-k+1)/(2p), \ \kappa_m = (N-k+1)/(2p), &  2-k-\ell \equiv_m 0; \\
	\kappa_1+\kappa_m = (N-k+1)/(2p), & 2-k-\ell \equiv_m p;\\
	\kappa_1 +\kappa_m = (N-k+1)/p, & \text{ else.}
	\end{cases}
\end{equation}

Lemma~\ref{lem:unitaritycond} states that the representation will be unitary when all the $A(k)>0$. If $\delta =1$, it suffices that $\kappa_0 >0$ and $\kappa_1 + \kappa_m < 1/p$. When $\delta =-1$, sufficient conditions for that are: $0<\kappa_0 < 1/2$ or $\kappa_0> N-1/2$ with $\kappa_1+\kappa_m < 1/p$. Indeed, $A_{(2)}(k) > 0$, and so $A(k) >0$, under the following restrictions:
\begin{align*}
	\begin{cases}
	\kappa_1, \kappa_m > (N-k+1)/2p \text{ or } \kappa_1,\kappa_m < (N-k+1)/2p, &2-k-\ell \equiv_m 0;\\
	\kappa_1+\kappa_m < (N-k+1)/2p, &   2-k-\ell \equiv_m p;\\
	\kappa_1 + \kappa_m < (N-k+1)/p,  &\text{else;}
	\end{cases}
\end{align*}
and as $2-N-\ell \not\equiv_m p,0$, then the condition $\kappa_1+\kappa_m < 1/p$ is sufficient for the inequality $A(k)>0$ to hold for all $k$.

\subsubsection{Cases of type I.ii}\label{sssec:Iiieven}
The condition on $H_{\kappa}(1-N-\ell) = (\kappa_1-\kappa_m)p$ is equivalent to $N+\ell \equiv_m 1$. There are two possibilities for $\lambda$, namely $\lambda_3 = N+1/2 + (\kappa_1 - \kappa_m)p$ or $\lambda_4 = N+1/2 - (\kappa_1 -\kappa_m)p$, and for each of these, we study $\Lambda_1 = i(\lambda_j + 1/2 + \kappa_0 \delta)$. We assume here that $\kappa_1>\kappa_m$ as, if it is not the case, it suffices to switch the analysis of $\lambda_3$ and $\lambda_4$.

\paragraph{\emph{First option:} $\lambda_3 = N+1/2 + (\kappa_1 - \kappa_m)p$ } There might be some $\sym{0}$-eigenvectors with equal eigenvalues as
\begin{align*}
	\sym{0} v_k^+ &=  N-k +1/2 + (\kappa_1 - \kappa_m)p, & \sym{0} v_j^- &= j-N-1/2 - (\kappa_1 - \kappa_m)p,
\end{align*}
and they are equal if $\kappa_m - \kappa_1 = (2N+1-k-j)/2p$.

The analysis on $A(k)$ proceeds similarly, but with a slight difference on the second factor of $A(k)$. Indeed, if $2-k-\ell \equiv_m p$ then
\begin{align*}
	A_{(2)}(k) = (\lambda_3 - k +1/2)^2 - H_{\kappa}(2-k-\ell) &= (N- k +1 + (\kappa_1-\kappa_m)p)^2 - (\kappa_1+\kappa_2)^2p^2
\end{align*}
and this is 0 if 
\begin{align*}
	\kappa_1 & = -(N-k+1)/2p, & \kappa_m &= (N - k +1)/2p.
\end{align*}
Then $A_{(2)}(k)$ will be positive if $\kappa_1 >0 > -(N-k+1)/2p$ and $0<\kappa_m < (N-k+1)/2p$. For the other values of $k$, then always $A_{(2)}(k) >0$. Naturally, $A_{(1)}(k) >0$ for even $k$, and is zero when $\kappa_0 = -k/2\delta$ or $\kappa_0 = -(\lambda_3 - k +1/2)/\delta$ for odd $k$.

When $\delta =1$, the condition for unitarity is then that $\kappa_m < (N-k+1)/2p$ for the biggest $k$ such that $2-k-\ell \equiv_m p$ and, if $\delta = -1$, that $\kappa_0 < 1/2$ or $\kappa_0 >\lambda_3$.

\paragraph{\emph{Second option:} $\lambda_4 = N+1/2 - (\kappa_1 -\kappa_m)p$}  The eigenvalues of $v_k^{\pm}$ might be the same as 
\begin{align*}
	\sym{0} v_k^+ &= (N+1/2 - k - (\kappa_1-\kappa_m)p)v_k^+ & \sym{0}v_j^- &= (j-N-1/2 + (\kappa_1-\kappa_m)p)v_j^-,
\end{align*}
and so they are the same if $\kappa_1 - \kappa_m = (2N-j-k+1)/2p$.

We get the following conditions on $\kappa_1$ and $\kappa_m$ to add to those on $\kappa_0$  by analysing when $A_{(2)}(k) =0$, or equivalently when
\begin{align*}
	(\lambda_4 - k +1/2)^2 = (N-k +1 - (\kappa_1-\kappa_m)p)^2
	&= \begin{cases}
	   	(\kappa_1+\kappa_m)^2p^2, & 2-k-\ell \equiv_m p;\\
	   	(\kappa_1-\kappa_m)^2p^2, & 2-k-\ell \equiv_m 0;\\
	   	0, & \text{else.}
	   \end{cases}
\end{align*}
It gives the following restrictions, for $k$ such that $2-k-\ell \equiv_m p$, $j$ such that $2-j-\ell \equiv_m 0$ and $q$ such that $2-q-\ell \not\equiv_m 0,p$
\begin{align*}
	\kappa_1 &\neq (N-k+1)/(2p), & \kappa_m &\neq -(N-k+1)/(2p),\\
	\kappa_1 - \kappa_m &\neq (N-j+1)/(2p), & \kappa_1-\kappa_m &\neq (N-q+1)/p.
\end{align*}

It is unitary if furthermore $A(k)>0$ for all $k$. This is achieved by $\kappa_1 -\kappa_2 < 1/p$ and $0<\kappa_1 < (N-k+1)/2p$ for the biggest $k$ such that $2-k-\ell \equiv_m 0$, when $\delta =1$, and by adding $0<\kappa_0 < 1/2$ or $\kappa_0 > \lambda_4 -k +1/2$, if $\delta = -1$. Note that the first condition ensures that $\lambda_4 >k$.

\subsubsection{Cases of type II}\label{sssec:IIeven}
This case results in 
\begin{align}
	\lambda_5 &= N+1/2, 
\end{align}
and we do the study for $\Lambda_1= i(\lambda_5+1/2+\kappa_0\delta)$. All the $v_k^{\pm}$ have different eigenvalues.

Similar analysis of $A(k)\neq 0$ gives $\kappa_0 \neq -k/2\delta$ and $\kappa_0\neq -(\lambda - k +1/2)/\delta$ for $k$ odd if the representation is to be irreducible. Furthermore, $A_{(2)}(k)$ might be zero. Indeed, $(\lambda - k +1/2)^2 =H_{\kappa}(2-k-\ell)$ when
\begin{equation*}
	\begin{cases}
    (N-k+1)^2 = (\kappa_1+\kappa_2)^2p^2, &  2-k-\ell \equiv_m p;\\
    (N-k+1)^2 = (\kappa_1-\kappa_2)^2p^2, &  2-k-\ell \equiv_m 0.\\
    \end{cases}
\end{equation*}
Or more precisely, we get the conditions
\begin{align}
	\begin{cases}
	\kappa_1+\kappa_2 = (N-k+1)/p, &  2-k-\ell \equiv_m p;\\
    |\kappa_1-\kappa_2| = (N-k+1)/p, &  2-k-\ell \equiv_m 0.\\
	\end{cases}
\end{align}
 The analysis proceeds in a similar fashion. The factor $A_{(2)}(k)>0$ as soon as $\kappa_1+\kappa_m < (N-k+1)/m$ for $2-k- \ell \equiv_m p$ or $|\kappa_1 - \kappa_m| < (N-k+1)/p$ for $2-k-\ell \equiv_m 0$, or without condition on $\kappa_1$ and $\kappa_m$ for other $k$. For $\delta =1$, then $A_{(1)}(k) <0$. For $\delta =-1$ then $A_{(1)}(k) <0$ for even $k$ and if $\kappa_0 < k/2$ or $\kappa_0 > \lambda_5+1/2 - k/2$ for odd $k$. Taking the minimum, or the maximum, of those set will ensure that $A(k)>0$ for all $k$.

\subsubsection{Cases of type III}\label{sssec:IIIeven}
We study $\Lambda_1 = i(\lambda +1/2+\kappa_0 \delta)$. The $\sym{0}$-eigenvalue $\lambda$ takes different values according to the parity of $N$. When $N$ is even, then
\begin{align}
	(\lambda - N -1/2)^2  + (\Lambda + i\kappa_0\delta)^2 =0,
\end{align}
so $\lambda_6 = N/2 + \kappa_0\delta$. When $N$ is odd, then $\lambda_7 = N/2$.

We begin by showing that $\lambda_7 = N/2$ for odd $N$ does not happen by the same argument as the $S_3$ case~\cite[Sect. 4.3.2]{debie_total_2018} and then continue with the case $\lambda_6 = N/2 + \kappa_0\delta$ for even $N$.

\paragraph{\emph{Odd} $N$ \emph{and} $\lambda_7 = N/2$} 
 If $\lambda_7 = N/2$, the integer $j_0 = (N-1)/2$ is such that
\begin{align}
	\sym{0} v_{j_0}^+ &= \tfrac{1}{2}v_{j_0}^+, & \sym{0} v_{j_0+1}^+ &= -\tfrac{1}{2} v_{j_0+1}^+, & \sym{0}v_{j_0}^- &= -\tfrac{1}{2} v_{j_0}^-, & \sym{0} v_{j_0+1}^- = \tfrac{1}{2} v_{j_0+1}^-.
\end{align}

We will show that the commutation relation involving $\comm{\sym{0},\sym{-}}$ is not respected, thus showing the impossibility of a representation. The actions of $\sym{+}$ and $\sym{-}$ previously found (equations~\eqref{eq:Omvpeven}--\eqref{eq:Opvmeven}) do not work here because their denominator is $0$. We can circumvent this by noticing that
\begin{align}
	v_{j_0+1}^+ &= \Lad{-}v_{j_0}^+ = (\sym{-}\sym{0} + \tfrac{1}{2}\comm{\sym{0},\sym{-}}) v_{j_0}^+\\
	&= \sym{-}(\sym{0} -\tfrac{1}{2})v_{j_0}^+ + 2(\sym{123} - T_0)T_-v_{j_0}^+\\
	&= i(\Lambda_1 - (-1)^{j_0}\kappa_0) G_{\kappa}(1 -j_0-\ell)v_{j_0}^-.
\end{align}

This forces $G_{\kappa}(1 -j_0-\ell) \neq 0$ as $v_{j_0+1}^+$ must not be zero. So $v_{j_0}^-$ is a multiple of $v_{j_0+1}^+$. But if this is the case, then studying the two sides of the commutator $\comm{\sym{0},\sym{-}}$ leads to a contradiction. Indeed, plugging in the last equation:
\begin{align}
	\comm{\sym{0},\sym{-}}v_{j_0}^+ &= -\sym{-} v_{j_0}^+ + 2(\sym{123} - T_0) T_-v_{j_0}^-\\
	\sym{0}\sym{-} v_{j_0}^+ - \tfrac{1}{2} \sym{-}v_{j_0}^+&= -\sym{-}v_{j_0}^+ +2v_{j_0+1}^+\\
	\sym{0}\sym{-}v_{j_0}^+ &= -\tfrac{1}{2} \sym{-}v_{j_0}^+ + 2v_{j_0+1}^+,
\end{align}
which is an impossible equation because $v_{j_0+1}^+ \neq 0$. There are thus no representations in this case.
 
\paragraph{\emph{Even} $N$ \emph{and} $\lambda_6 = N/2 + \kappa_0\delta$.} 
The $\sym{0}$-eigenvalues of $v_k^+$ and $v_j^-$ are
\begin{align}
	\sym{0} v_k^+ &= N/2 + \kappa_0 \delta - k, & \sym{0}v_j^{-} &= j-N/2 -\kappa_0\delta.
\end{align}
So they would be the same if $\kappa_0 = (k+j-N)/2\delta$. If $j$ and $k$ have the same parity, then they are distinguishable under the action of $\dsig{0}$. If they have the same parity, then the value of $\kappa_0$ is prohibited by the study of $A(k)\neq 0$.

Study $A(k)=0$. The first factor of $A(k) = - A_{(1)}(k)A_{(2)}(k)$ may be zero for some values of $\kappa_0$
\begin{equation}
	A_{(1)}(k) = (\lambda_6 - k +1/2)^2 - (\lambda_6 + 1/2 + \kappa_0\delta + (-1)^k\kappa_0\delta)^2 =0.
\end{equation}
For even $k$, then it forces $\kappa_0 = (k-N-1)/2\delta$. For odd $k$, it forces $\kappa_0 = -k/2\delta$ or $\kappa_0 = -(\lambda_6 - k/2+1/2)/\delta = -(N-k+1)/4\delta$.

The other factor can also cancel as
\begin{equation}
	A_{(2)}(k) = (\lambda_6 - k +1/2)^2 - H_{\kappa}(2-k-\ell) =0.
\end{equation}
For $2-k-\ell \equiv_m p$ then this happens if $\kappa_1+\kappa_m = \pm(\lambda_6 + k+1/2)/p$. For $2-k-\ell\equiv_m 0$ if $\kappa_1-\kappa_m = \pm(\lambda_6 - k +1/2)/p$. For the other $k$, this can also happen if $\kappa_0 = (N-2k+1)/2\delta$.

Unitarity is studied from the condition $A(k)>0$, for $1\leq k \leq N$, of Lemma~\ref{lem:unitaritycond}. The first factor $A_{(1)}(k)$ is negative under the assumption that 
\begin{equation}
	\begin{cases}
		|N/2 + \kappa_0\delta +1/2 - k| < |N/2 + \kappa_0\delta +1/2|, & \text{even }k;\\
		|N/2 + \kappa_0\delta + 1/2 - k| < |N/2 + 3\kappa_0\delta +1/2|, & \text{odd }k.
	\end{cases}
\end{equation}
 The first is then $\delta\kappa_0 > (k-N-1)/2$. The second divides according to $k$:
 \begin{equation}
 	\begin{cases}
 	\kappa_0\delta > -k/2 \text{ or } \kappa_0\delta < (k-N-1)/4, & k<(N+1)/3,\\
 	\kappa_0\delta < -k/2 \text{ or } \kappa_0\delta > (k-N-1)/4, & k>(N+1)/3.
 	\end{cases}
 \end{equation}
For $\delta =1$, this is always the case. If $\delta =-1$ then, for all $k$, it will require $\kappa_0 <1/2$ or $\kappa_0 > N/2+1/2$.

The second factor $A_{(2)}(k)$ is always positive when $k$ is such that $2-k-\ell \not\equiv_m 0,p$. When $2-k-\ell\equiv_m p$ then it is positive if $\kappa_1+\kappa_m < |\lambda_6 - k +1/2|/p$. And for $2-k-\ell\equiv_m 0$, then $|\kappa_1-\kappa_m| < |\lambda_6 - k +1/2|/p$.

The cases considered complete the proof of Theorem~\ref{thm:irrepseven}.\qed

\section{The monogenic representations}\label{sec:Mono}

This section contains a concrete realisation of a family of representations of the symmetry algebra $\SymAlg$. It consists of the  spaces of monogenics of the Dunkl--Dirac operator. They are built with the Fischer decomposition (Theorem~\ref{thm:fischer}) from the two-dimensional Dunkl--Laplace harmonics given by Dunkl~\cite{dunkl_differential-difference_1989,dunkl2014orthogonal} and takes the third dimension, with its $A_1$ root system contribution, into account via a Cauchy--Kovalevskaya extension (Theorem~\ref{thm:CK}).

Let $\Poly_n(\RR^N)$ denote the space of polynomials of degree $n$ in $N$ variables. Let $\Mono_n(\RR^3,\CC^2):= \ker \DDop \cap(\Poly_n(\RR^3)\otimes \CC^2)$ be the space of Dunkl monogenics of degree $n$ in $3$ variables. Here we make the identification of a spinor representation of the Clifford algebra $\Clif(3)$ with $\CC^2$ using Pauli matrices with an extra sign. Let $\delta\in\{-1,+1\}$. Realise the Clifford elements as one set of Pauli matrices: 
	\begin{align*}
	e_1 &\mapsto \begin{pmatrix}
	0&1 \\ 1&0
	\end{pmatrix}, 
	& e_2 &\mapsto \begin{pmatrix}
	0&-i\\i&0
	\end{pmatrix},
	& e_3 &\mapsto \begin{pmatrix}
	\delta&0\\0&-\delta
	\end{pmatrix}.
	\end{align*}
The difference given by $\delta$ is to account for the two two-dimensional irreducible representations of the Clifford algebra $\Clif(3)$. The pseudo-scalar $e_1e_2e_3$ acts as $i\delta$.

Using the Cauchy--Kovalevskaya extension and the Fischer decomposition theorem, we will construct a basis for the Dunkl monogenics $\Mono_n(\RR^3,\CC^2)$ of degree $n$ in 3 variables from the harmonics of the Dunkl--Laplace operator in two dimensions.

There is another realisation of $\osp(1|2)$ inside the algebra obtained by restricting to the $x_1$ and $x_2$ coordinates. Here we use the fact that the root system is reducible. Let the following denote the 2D counterparts of the operators defined in Section~\ref{sec:DDDeq}
\begin{align}
\widehat{\DDop} &:=  e_1 \Dun{1} + e_2 \Dun{2}, & \widehat{\xun} &:= e_1 x_1 + e_2x_2,\nonumber\\
\widehat{\DDop}^2 &= \Dun{1}^2 + \Dun{2}^2 =: \widehat{\DLapl}, & \widehat{\xun}^2 &= x_1^2 + x_2^2=:\widehat{\mathbf{x}}^2,\label{eq:osp12rel2d}\\
\widehat{\Euler} &:= x_1\partial_{x_1} + x_2\partial_{x_2}, & \widehat{\gamma} &:= {m\over 2}(\kappa_1 + \kappa_m). \nonumber
\end{align}
The operators $\widehat{\DDop}$ and $\widehat{\xun}$ respect
\begin{equation}\label{eq:commrelDDopxun2d}
\acomm{\widehat{\DDop},\widehat{\xun}} = 2(\widehat{\Euler} + 1 +\widehat{\gamma}),
\end{equation}
and they interact with the $\Lie{sl}_2$-triple $\widehat{\mathbf{x}}^2,\widehat{\DLapl},\widehat{\Euler}$ as follows: 
	\begin{equation}
	\begin{aligned}
	\comm{\widehat{\DDop},\widehat{\mathbf{x}}^2} &= 2\widehat{\xun}, & \comm{\widehat{\Euler},\widehat{\DDop}} &= -\widehat{\DDop},\\
	\comm{\widehat{\DLapl},\widehat{\xun}} &= 2\widehat{\DDop}, & \comm{\widehat{\Euler},\widehat{\xun}} &= \widehat{\xun}.
	\end{aligned}
	\end{equation}

 Recall some basic facts from hypergeometric analysis. The Pochhammer symbol $(a)_n$ is defined as $(a)_0 = 1$; $(a)_1 = a$ and $(a)_n = a_{n-1}(a+n-1) = a(a+1)\dots (a+n-1)$. The hypergeometric  series ${}_rF_s$ is given by
\begin{equation}
\hyperF{r}{s}{a_1,\dots , a_r}{b_1,\dots , b_s}{z} := \sum_{k=0}^{\infty} {(a_1)_k \dots (a_r)_k \over (b_1)_k \dots (b_s)_k} {z^k\over k!}.
\end{equation}

We will need some special orthogonal polynomials to present the results.
%
The \emph{Jacobi polynomial} of degree $n$ is given for constants $a$, $b$ as
\begin{equation}
P_n^{(a,\, b)} (x) := {(a +1)_n \over n!} \hyperF{2}{1}{-n,\, n+a+b+1}{a+1}{{1-x\over 2}}
\end{equation}
and they fulfil the identity
\begin{equation}\label{eq:identityJacobi}
(x+y)^n P_n^{(a,\, b)} \left( {x-y\over x+y}\right) = {(a+1)_n\over n!} x^n \hyperF{2}{1}{-n,\,-n-b}{a+1}{-{y\over x}}.
\end{equation}

Dunkl and Xu defined~\cite{dunkl2014orthogonal} the \emph{generalized Gegenbauer polynomials} by
\begin{equation}
\begin{aligned}\label{eq:Gegenbauer}
G_{2n}^{(\lambda,\,\mu)}(x) &= {(\lambda + \mu)_n \over (\mu + 1/2)_n} P_n^{(\lambda -1/2,\, \mu - 1/2)}(2x^2-1),\\
G_{2n+1}^{(\lambda,\,\mu)}(x) &= {(\lambda + \mu)_{n+1}\over (\mu + 1/2)_{n+1}} x P_n^{(\lambda -1/2,\, \mu+1/2)}(2x^2-1).
\end{aligned}
\end{equation}

The following proposition is extracted from the original paper of Dunkl~\cite{dunkl_differential-difference_1989} in the updated formulation of his and Xu's book~\cite{dunkl2014orthogonal} and gives a basis for the harmonics of the Dunkl-Laplacian.

\begin{proposition}[Dunkl, Sect. 3.14 and 3.19~\cite{dunkl_differential-difference_1989}]
	Let $n$ be a natural number and $D_{2m}$ be the dihedral group of order $2m$. There is a basis of the space of $D_{2m}$ Dunkl harmonics $\Harmo_n(\RR^2) := \ker \widehat{\DLapl} \cap \Poly_n(\RR^2)$ given by pairs of polynomials $\phi_n^{+}$, $\phi_n^{-}$ of degree $n$ depending on the parity of $m$. Denote $z := x_1+ix_2$ and $\conj{z} := x_1-ix_2$.
	\begin{itemize}
		\item (Odd $m$). Recall that then $\kappa_1=\kappa_m$. Decompose $n$ by Euclidean division as $n= km + \ell$ for $0\leq \ell <m$. 
		\begin{equation}
		\begin{aligned}
		\phi_n^+(x_1,x_2)  &= z^{\ell}\sum_{j=0}^n {(\kappa_1)_j(\kappa_1+1)_{n-j}\over j!(n-j)!} \conj{z}^{mj} z^{m(n-j)};\\
		\phi_n^-(x_1,x_2) &=\conj{\phi_n^+(x_1,x_2)}. 
		\end{aligned}
		\end{equation}
		\item (Even $m=2p$). Let $n = kp + \ell$ with $0 \leq \ell < p$. The harmonics polynomials are given by
		\begin{equation}
		\begin{aligned}
		\phi_n^+(x_1,x_2) &= z^{\ell} f_{k}(z^p,\conj{z}^p), \\
		\phi_n^-(x_1,x_2) &= \conj{z}^{\ell} f_k(\conj{z}^p,z^p),
		\end{aligned}
		\end{equation}
		with $f$ expressed with Gegenbauer polynomials and polar decomposition $z=re^{i\theta}$
		\begin{equation}
		f_k(z,\conj{z}) = r^k \left( {n+2\kappa_m + (1+(-1)^n)\kappa_1\over 2(\kappa_m+\kappa_1)} G_n^{(\kappa_m,\kappa_1)}(\cos(\theta)) + i \sin(\theta)G_{n-1}^{(\kappa_m+1,\kappa_1)}(\cos(\theta))\right).
		\end{equation}
	\end{itemize}
\end{proposition}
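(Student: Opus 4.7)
The plan is to work in complex coordinates $z = x_1 + i x_2$ and $\conj{z} = x_1 - i x_2$, in which the dihedral group $D_{2m}$ acts transparently: the rotation $\sigma_1\sigma_j$ sends $z \mapsto e^{2\pi i j/m} z$, while each reflection $\sigma_j$ sends $z \mapsto e^{2\pi i j/m}\conj{z}$. First I would rewrite the two Dunkl operators $\Dun{1} \pm i\Dun{2}$ in terms of $\partial_z$, $\partial_{\conj z}$, and the group action, noticing that the denominators $\bili{x}{\alpha_j}$ combine with the reflection-difference quotient to give something manageable on homogeneous monomials. Applied to $z^a\conj{z}^b$, each reflection produces factors $e^{2\pi i j(a-b)/m}$, and summing over $j$ kills everything except a discrete set of monomials whose exponents satisfy the relevant congruence mod $m$. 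This orthogonality-of-characters cancellation is what forces the basis vectors to have the ``gap'' structure $z^\ell z^{m(n-j)}\conj{z}^{mj}$ visible in the formulas.

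For the odd case, the single $W$-orbit of roots means the computation is uniform. I would compute $\widehat{\DLapl}(z^{\ell + m(n-j)}\conj{z}^{mj})$ directly and show that the result is a linear combination of two neighboring monomials $z^{\ell+m(n-j)\pm 1}\conj{z}^{mj\mp 1}$ shifted appropriately, with coefficients that are rational in $\kappa_1$, $j$, and $n-j$. The claim $\widehat{\DLapl}\phi_n^+ = 0$ then becomes a telescoping identity in $j$: the $j$-th and $(j+1)$-th summands produce cancelling contributions precisely when the coefficients are $(\kappa_1)_j(\kappa_1+1)_{n-j}/(j!(n-j)!)$. This is a straightforward Pochhammer identity of the form $(\kappa_1)_{j+1}/(j+1)! \cdot (\text{stuff}) = (\kappa_1+1)_{n-j}/(n-j)! \cdot (\text{stuff})$, i.e.\ essentially the contiguous relations for $\hyperF{2}{1}{}{}{\cdot}$. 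The expression $\phi_n^- = \conj{\phi_n^+}$ is automatic from the reality of $\widehat{\DLapl}$ under conjugation $z \leftrightarrow \conj{z}$.

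For the even case I would pass to polar coordinates $z = re^{i\theta}$ and decompose $\widehat{\DLapl}$ into a radial piece and an angular Dunkl piece $J_\kappa$ acting on $\theta$. The two independent parameters $\kappa_1, \kappa_m$ correspond to odd- and even-indexed reflections, which in polar coordinates act on $\theta$ with shift $\pi/p$. One recognises $J_\kappa$ as the symmetric second-order differential-difference operator whose eigenfunctions are precisely the generalized Gegenbauer polynomials $G_n^{(\kappa_m,\kappa_1)}(\cos\theta)$ and $\sin\theta\, G_{n-1}^{(\kappa_m+1,\kappa_1)}(\cos\theta)$, with eigenvalue matching the radial part on $r^k$. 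The stated combination is thus an eigenfunction of the angular part balanced against $r^n$ in the radial part, yielding a global harmonic.

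To finish, I would invoke the dimension count $\dim \Harmo_n(\RR^2) = 2$ for $n\geq 1$, which follows from the Fischer decomposition $\Poly_n(\RR^2) = \Harmo_n(\RR^2) \oplus \widehat{\mathbf{x}}^2\Poly_{n-2}(\RR^2)$ together with $\dim \Poly_n(\RR^2) = n+1$. Linear independence of $\phi_n^+$ and $\phi_n^-$ is clear: in the odd case their leading terms are $z^n$ and $\conj{z}^n$ respectively, and similarly for even $m$ by inspecting the highest power of $z$ versus $\conj{z}$. The main obstacles are therefore the explicit verification of the telescoping identity in the odd case (bookkeeping of Pochhammer shifts) and, in the even case, matching the polar-coordinate form of $\widehat{\DLapl}$ against the classical three-term recurrence for $G_n^{(\lambda,\mu)}$—both are routine but delicate.
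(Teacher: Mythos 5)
The paper itself contains no proof of this proposition: it is imported from Dunkl's 1989 paper (Sections 3.14 and 3.19), in the updated formulation of the Dunkl--Xu book, so there is no internal argument to compare yours against. Your sketch essentially reconstructs Dunkl's original route — complexified Dunkl operators $\Dun{1}\pm i\Dun{2}$ acting on monomials $z^a\conj{z}^b$, root-of-unity cancellation over the group sum, a Pochhammer/telescoping identity in the odd case, the polar-coordinate splitting into radial and angular parts with generalized Gegenbauer eigenfunctions in the even case, and the Fischer-decomposition count $\dim\Harmo_n(\RR^2)=2$ — so the approach is the right one and matches the source.

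Two details need repair, though neither changes the structure of your argument. First, the formula you are telescoping inherits a typo from the statement: for $n=km+\ell$ the sum must run over $j=0,\dots,k$ with monomials $\conj{z}^{mj}z^{m(k-j)}$ and coefficients $(\kappa_1)_j(\kappa_1+1)_{k-j}/(j!(k-j)!)$, since otherwise the polynomial has degree $\ell+mn\neq n$ and cannot lie in $\Harmo_n(\RR^2)$. Relatedly, the ``neighboring monomials'' you write down, $z^{\ell+m(n-j)\pm1}\conj{z}^{mj\mp1}$, have the same degree as the input, whereas $\widehat{\DLapl}$ lowers degree by two; the cancellation actually takes place among degree-$(n-2)$ monomials whose exponent differences are shifted by multiples of $m$. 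Second, the leading-term independence argument fails as stated when $m\mid n$ (i.e.\ $\ell=0$): in that case both $\phi_n^{+}$ and $\phi_n^{-}$ contain $z^{n}$ and $\conj{z}^{n}$, and independence follows instead from $(\kappa_1+1)_k\neq(\kappa_1)_k$, which holds because $\kappa_1>0$.
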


It is possible to rewrite $f_k(z,\conj{z})$ in a slightly more direct way for computational purposes~\cite{dunkl2014orthogonal}. Recall that $2x_1 =  (z+\conj{z})$ and $2ix_2 = z-\conj{z}$ and rewrite $f_k$ as
\begin{equation}
f_k(z,\conj{z}) = \begin{cases}
\dfrac{(\kappa_m+\kappa_1 + 1)_t} {(\kappa_1+1/2)_t} g_{2t}(z,\conj{z}), & k = 2t,\\ & \\
\dfrac{(\kappa_m+\kappa_1 + 1)_t}{(\kappa_1 + 1/2)_{t+1}} g_{2t+1}(z,\conj{z}), & k=2t+1,
\end{cases}
\end{equation}
with 
\begin{align*}
g_{2t}(z,\conj{z}) &= (-1)^t \sum_{j=0}^t {(-t+1/2-\kappa_m)_{t-j} (-t + 1/2 - \kappa_1)_j\over (t-j)!j!} x_1^{2t-2j}(ix_2)^{2j}\\
& \quad + (-1)^{t-1}\sum_{j=0}^{t-1} {(-t+1/2 - \kappa_m)_{t-1-j}(-t+1/2 - \kappa_1)_j\over (t-1-j)!j!}x_1^{2t-1-2j}(ix_2)^{(2j+1)},
\intertext{and}
g_{2t+1}(z,\conj{z}) &= (-1)^{t+1} \sum_{j=0}^t{(-t+1/2-\kappa_m)_{t+1-j} (-t + 1/2 - \kappa_1)_j\over (t-j)!j!} x_1^{2t+1-2j}(ix_2)^{2j}\\
& \quad + (-1)^{t+1}\sum_{j=0}^{t} {(-t+1/2 - \kappa_m)_{t-j}(-t+1/2 - \kappa_1)_{j+1}\over (t-j)!j!}x_1^{2t-2j}(ix_2)^{(2j+1)}.
\end{align*}

Knowing the Dunkl harmonics let us deduce the Dunkl monogenics. Let $\chi^+$ and $\chi^-$ be spinors  here expressed as the first and second coordinate vector $\chi^+ = (1,0)^T$ and $\chi^-=(0,1)^T$. The following proposition states a basis of Dunkl monogenics in two dimensions $\Mono_n(\RR^2,\CC^2) = \ker \DDop \cap(\Poly_n(\RR^2)\otimes \CC^2)$, where again we identify the spinor representation with $\CC^2$.

\begin{proposition}
	Let $n$ be a natural number. The polynomials
	\begin{align}
	\monopo^+_n(x_1,x_2) &= \phi_n^+(x_1,x_2) \chi^+ && \text{and} & \monopo_n^-(x_1,x_2) &= \phi_n^-(x_1,x_2)\chi^-
	\end{align}
	are a basis for the Dunkl monogenics $\Mono_n(\RR^2,\CC^2)$ of degree $n$.
\end{proposition}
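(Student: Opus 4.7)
The plan is to reduce the monogenicity condition $\widehat{\DDop}\monopo_n^\pm = 0$ to a ``Dunkl-holomorphicity'' condition on the scalar polynomials $\phi_n^\pm$, then verify this condition using Dunkl's construction, and finally conclude by a dimension count.

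First, using the explicit Pauli-matrix realisation of $e_1,e_2$, a direct computation shows that for any polynomial $\phi \in \Poly_n(\RR^2)$,
\begin{align*}
\widehat{\DDop}(\phi\,\chi^+) &= \bigl[(\Dun{1}+i\Dun{2})\phi\bigr]\chi^-, &
\widehat{\DDop}(\phi\,\chi^-) &= \bigl[(\Dun{1}-i\Dun{2})\phi\bigr]\chi^+.
\end{align*}
Hence $\monopo_n^+ = \phi_n^+\chi^+$ is monogenic if and only if $(\Dun{1}+i\Dun{2})\phi_n^+ = 0$, and analogously $\monopo_n^-$ is monogenic if and only if $(\Dun{1}-i\Dun{2})\phi_n^- = 0$.

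The next step is to verify these ``Dunkl-holomorphicity'' conditions. Each of them is stronger than the harmonicity $\widehat{\DLapl}\phi_n^\pm = 0$, since $\widehat{\DLapl} = (\Dun{1}-i\Dun{2})(\Dun{1}+i\Dun{2}) = (\Dun{1}+i\Dun{2})(\Dun{1}-i\Dun{2})$. However, these stronger conditions are built into Dunkl's construction~\cite{dunkl_differential-difference_1989,dunkl2014orthogonal}: his polynomials $\phi_n^+$ and $\phi_n^-$ are obtained precisely as the ``Dunkl-holomorphic'' and ``Dunkl-anti-holomorphic'' generators of the kernel of $\widehat{\DLapl}$. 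In the odd $m$ case, the vanishing can be verified directly by applying $(\Dun{1}\pm i\Dun{2})$ to the explicit Pochhammer series and using the action of the reflections $\sigma_j$ on the complex variables $z,\conj{z}$; in the even $m$ case one exploits the Gegenbauer expansion together with the hypergeometric identity~\eqref{eq:identityJacobi}.

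It remains to check that $\monopo_n^+, \monopo_n^-$ span all of $\Mono_n(\RR^2, \CC^2)$. Any element $F = A\chi^+ + B\chi^- \in \Mono_n(\RR^2,\CC^2)$ with $A,B\in\Poly_n(\RR^2)$ must, by the reduction above, satisfy $A\in \ker(\Dun{1}+i\Dun{2})$ and $B\in\ker(\Dun{1}-i\Dun{2})$. Both kernels are contained in $\Harmo_n(\RR^2)$, and by Dunkl's classification $\Harmo_n(\RR^2)$ is two-dimensional for $n\geq 1$ with basis $\{\phi_n^+, \phi_n^-\}$, each of which lies in exactly one of the two kernels; thus each kernel is one-dimensional. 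Combined with the linear independence of $\chi^\pm$, this gives a two-dimensional space $\Mono_n(\RR^2,\CC^2)$ with basis $\{\monopo_n^+, \monopo_n^-\}$. The case $n=0$ is immediate since $\phi_0^\pm$ reduces to a constant and any constant spinor is trivially monogenic.

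The main technical obstacle is the verification of the Dunkl-holomorphic and anti-holomorphic property for the explicit $\phi_n^\pm$, especially in the even $m$ case where the generalised Gegenbauer polynomials intervene. This property is however a known feature of Dunkl's original construction, so it can also simply be cited as an input from~\cite{dunkl_differential-difference_1989,dunkl2014orthogonal} rather than reproved from scratch.
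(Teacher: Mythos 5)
Your proof is correct and follows essentially the same route as the paper: the same Pauli-matrix reduction of $\widehat{\DDop}\,\monopo_n^{\pm}=0$ to the scalar conditions $(\Dun{1}\pm i\Dun{2})\phi_n^{\pm}=0$, with that annihilation property cited from Dunkl's construction rather than reproved. The only difference is that you also spell out the spanning argument (each of $\ker(\Dun{1}+i\Dun{2})$ and $\ker(\Dun{1}-i\Dun{2})$ in degree $n$ is one-dimensional inside $\Harmo_n(\RR^2)$), which the paper leaves implicit; this addition is sound, granting the standard fact that for positive $\kappa$ no nonzero homogeneous polynomial of positive degree is annihilated by both $\Dun{1}$ and $\Dun{2}$, which is what your claim that each $\phi_n^{\pm}$ lies in \emph{exactly} one of the two kernels rests on.
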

\begin{proof}
	 Applying $\widehat{\DDop}$ on $\monopo^+_j$ in the Pauli matrices realisation yields
	\begin{align}
	\widehat{\DDop}\, \monopo^+_n = e_1 \Dun{1} \phi_n^+ \chi^+ + e_2\Dun{2}\phi^+_n \chi^+ = (\Dun{1} + i \Dun{2})\phi_n^+.
	\end{align}
	The polynomial $\phi_n^+$ is a harmonic of the Dunkl--Laplace operator. The Dunkl--Laplace operator factors as $\widehat{\Lapl} = (\Dun{1} + i\Dun{2})(\Dun{1} - i\Dun{2})$. Working out the properties of the function $f_k(z,\conj{z})$ shows that $\phi_n^+$ is annihilated by the first factor~\cite{dunkl_differential-difference_1989}, thus showing that $\Phi_n^+$ is a monogenic. Conjugating shows the result for the other polynomial $\Phi^-_n$.
\end{proof}

When the constants $\kappa_0$, $\kappa_1$ and $\kappa_m$ are positive, there is a decomposition of the space of spinor valued polynomials by Dunkl monogenics. 

\begin{theorem}[Fischer decomposition~\cite{orsted_howe_2009}]\label{thm:fischer}
	Let $n\in \NN$ and $\kappa_0,\kappa_1,\kappa_m >0$. There exists a decomposition of the space of spinor valued polynomials given by
	\begin{equation}
	\Poly_n(\RR^2)\otimes \CC^2 = \bigoplus_{j=0}^n \widehat{\xun}^{n-j} \Mono_j(\RR^2,\CC^2).
	\end{equation}	
\end{theorem}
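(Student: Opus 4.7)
The plan is to proceed by induction on $n$, following the standard approach in Clifford analysis. The base case $n=0$ is immediate since $\Poly_0(\RR^2)\otimes\CC^2=\CC^2\subseteq\ker\widehat{\DDop}$. For the inductive step, I will establish
\begin{equation*}
\Poly_n(\RR^2)\otimes\CC^2 \;=\; \Mono_n(\RR^2,\CC^2)\;\oplus\; \widehat{\xun}\bigl(\Poly_{n-1}(\RR^2)\otimes\CC^2\bigr),
\end{equation*}
from which the claimed decomposition will follow by applying the induction hypothesis to the second summand.

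The central technical step will be to compute $\widehat{\DDop}\,\widehat{\xun}^k M_j$ for $M_j\in\Mono_j(\RR^2,\CC^2)$. Starting from $\widehat{\DDop}M_j=0$ and iterating the relation $\{\widehat{\DDop},\widehat{\xun}\}=2(\widehat{\Euler}+1+\widehat{\gamma})$ together with $\widehat{\xun}^2=\widehat{\mathbf{x}}^2$ and $[\widehat{\DDop},\widehat{\mathbf{x}}^2]=2\widehat{\xun}$, a short induction on $k$ will give
\begin{align*}
\widehat{\DDop}\,\widehat{\xun}^{2l}M_j &= 2l\,\widehat{\xun}^{2l-1}M_j,\\
\widehat{\DDop}\,\widehat{\xun}^{2l+1}M_j &= 2(j+l+1+\widehat{\gamma})\,\widehat{\xun}^{2l}M_j.
\end{align*}
Under the positivity hypothesis $\kappa_1,\kappa_m>0$ one has $\widehat{\gamma}=\tfrac{m}{2}(\kappa_1+\kappa_m)>0$, so both scalars are strictly positive for every $j,l\geq 0$.

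With these recurrences in hand, the inductive step follows readily. The map $\widehat{\xun}\colon\Poly_{n-1}\otimes\CC^2\to\Poly_n\otimes\CC^2$ is injective because $\widehat{\xun}^2=x_1^2+x_2^2$ is. Combined with the induction hypothesis, the formulas above show $\widehat{\DDop}\colon\Poly_n\otimes\CC^2\to\Poly_{n-1}\otimes\CC^2$ is surjective, so rank-nullity yields $\dim\Mono_n(\RR^2,\CC^2)=2$, matching the dimension produced by the complement. To see the sum is direct, I would take $\widehat{\xun}\,Q\in\Mono_n$ with $Q=\sum_{j=0}^{n-1}\widehat{\xun}^{n-1-j}M_j$ via the induction hypothesis; then $0=\widehat{\DDop}(\widehat{\xun}\,Q)=\sum_{j=0}^{n-1} c_j\,\widehat{\xun}^{n-j-1}M_j$ with all $c_j>0$, and uniqueness in the degree $n-1$ decomposition forces each $M_j=0$. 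The main subtlety lies in the book-keeping between even and odd powers of $\widehat{\xun}$, since $\widehat{\xun}^2$ is a scalar polynomial whereas $\widehat{\xun}$ itself is spinor-valued; positivity of $\kappa_1,\kappa_m$ is used only to ensure the coefficients above never vanish, while $\kappa_0$ plays no role in the 2D setting.
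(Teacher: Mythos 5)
Your proof is correct, but it cannot be compared with ``the paper's own proof'' in the usual sense: the paper gives no proof of Theorem~\ref{thm:fischer}, importing it instead from~\cite{orsted_howe_2009}, where it is obtained within the $(\Pin(N),\osp(1|2))$ Howe-duality framework. Judged on its own, your inductive argument is sound and complete. The base case holds because Dunkl operators annihilate constants, and your two recurrences are exactly right: $\widehat{\DDop}\,\widehat{\xun}^{2l}M_j=2l\,\widehat{\xun}^{2l-1}M_j$ and $\widehat{\DDop}\,\widehat{\xun}^{2l+1}M_j=2(j+l+1+\widehat{\gamma})\,\widehat{\xun}^{2l}M_j$ follow from $\acomm{\widehat{\DDop},\widehat{\xun}}=2(\widehat{\Euler}+1+\widehat{\gamma})$ in equation~\eqref{eq:commrelDDopxun2d} together with $\comm{\widehat{\DDop},\widehat{\mathbf{x}}^2}=2\widehat{\xun}$. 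With $\kappa_1,\kappa_m>0$ all these coefficients are strictly positive, which is precisely what makes $\widehat{\DDop}$ surjective in each degree (via the induction hypothesis), lets rank--nullity give $\dim\Mono_n(\RR^2,\CC^2)=2$, and closes your intersection argument; your remark that $\kappa_0$ plays no role is also correct, since $\widehat{\gamma}=\tfrac{m}{2}(\kappa_1+\kappa_m)$ does not involve $\kappa_0$, whose positivity is needed only for the Cauchy--Kovalevskaya extension, Theorem~\ref{thm:CK}. It is worth noting that your recurrences coincide with computations the paper does perform later, in the proof of the explicit basis of $\Mono_n(\RR^3,\CC^2)$: your odd-power coefficient matches the paper's $d^k_{a,b}$ specialised to $a=1$, $b=2\beta+1$, which equals $2(\beta+k+1+\widehat{\gamma})$, whereas the displayed equation~\eqref{eq:proofDxMk} omits the $+1$; that is a typo in the paper, and your version is the correct one. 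In short, what your route buys is a self-contained, elementary proof using only the $\osp(1|2)$ relations already set up in Section~\ref{sec:Mono}; what the paper's citation buys is generality, since the argument of~\cite{orsted_howe_2009} applies to arbitrary reflection groups and dimensions.
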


Everything is in place for the Cauchy--Kovalevskaya extension Theorem. It establishes an isomorphism between the two-dimensional space and the three-dimensional monogenics taking into account the $\ZZ_2$ reflection group.

\begin{theorem}[Cauchy--Kovalevskaya, \cite{de_bie_diracdunkl_2016}]\label{thm:CK}
	Let $\kappa_0 >0$. There is an isomorphism between the spaces of spinor valued polynomials $\Poly_n(\RR^2)\otimes \CC^2$ and $\Mono_n(\RR^3,\CC^2)$ given on polynomials by
	\begin{equation}
	\CK_{x_3}^{\kappa_0} = \hyperF{0}{1}{-}{\kappa_0 + 1/2}{-{(x_3\widehat{\DDop})^2\over 4}} - {e_3 x_3 \widehat{\DDop}\over 2\kappa_0 +1} \hyperF{0}{1}{-}{\kappa_0 + 3/2}{-{(x_3\widehat{\DDop})^2\over 4}}. 
	\end{equation}
\end{theorem}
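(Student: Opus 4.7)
The plan is to verify the two assertions contained in the statement: (a) that $\CK_{x_3}^{\kappa_0}$ maps $\Poly_n(\RR^2)\otimes\CC^2$ into $\Mono_n(\RR^3,\CC^2)$, and (b) that the resulting linear map is a bijection.

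For (a), I decompose $\CK_{x_3}^{\kappa_0} = A(x_3) + e_3 B(x_3)$ where
\[
A(x_3) = \hyperF{0}{1}{-}{\kappa_0+1/2}{-(x_3\widehat{\DDop})^2/4}, \qquad B(x_3) = -\frac{x_3\widehat{\DDop}}{2\kappa_0+1}\hyperF{0}{1}{-}{\kappa_0+3/2}{-(x_3\widehat{\DDop})^2/4}.
\]
Since an input $p\in\Poly_n(\RR^2)\otimes\CC^2$ does not depend on $x_3$, the term $A(x_3)p$ is $x_3$-even while $e_3 B(x_3)p$ is $x_3$-odd. The $A_1$-Dunkl operator $\Dun{3} = \partial_{x_3} + \kappa_0(1-\sigma_0)/x_3$ therefore acts as $\partial_{x_3}$ on the former and as $\partial_{x_3} + 2\kappa_0/x_3$ on the latter. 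Writing $\DDop = \widehat{\DDop} + e_3\Dun{3}$ and using $\widehat{\DDop}e_3 = -e_3\widehat{\DDop}$, the equation $\DDop\,\CK_{x_3}^{\kappa_0}p = 0$ splits by $x_3$-parity into the two independent identities
\[
\widehat{\DDop}(A p) + e_3\Dun{3}(e_3 B p) = 0, \qquad \widehat{\DDop}(e_3 B p) + e_3\Dun{3}(A p) = 0.
\]
The second identity is immediate from a direct differentiation of $A$ combined with the Clifford anticommutation. The first one, after expanding both ${}_0F_1$ series, reduces termwise to the Pochhammer identity $(\kappa_0+1/2)_{k+1} = (\kappa_0+1/2)(\kappa_0+3/2)_k$, which holds for all $k\geq 0$.

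For (b), the key observation is that $\CK_{x_3}^{\kappa_0}$ preserves total polynomial degree: in every summand the operator $\widehat{\DDop}^{k}$, which lowers the $(x_1,x_2)$-degree by $k$, is paired with $x_3^{k}$, which raises the total degree by $k$. Injectivity then follows by evaluating at $x_3 = 0$, since this annihilates every $k\geq 1$ term of $A$ and every term of $e_3 B$, leaving the original $p$. Surjectivity can be obtained from a dimension count (both spaces have dimension $2(n+1)$), or, more constructively, by rewriting $\DDop F = 0$ as $\Dun{3} F = -e_3\widehat{\DDop}F$ and solving recursively for the $x_3$-Taylor coefficients of any $F\in\Mono_n(\RR^3,\CC^2)$ in terms of the initial datum $F|_{x_3=0}$; the explicit expression for $\CK_{x_3}^{\kappa_0}$ is precisely the closed form of this recursion.

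The main obstacle is the bookkeeping in step (a): correctly tracking the Pochhammer shifts and the anticommutation signs when expanding both ${}_0F_1$ series. Once that termwise verification is in place, the bijection statement of (b) follows essentially by degree preservation and evaluation at $x_3=0$.
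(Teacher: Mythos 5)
The paper gives no proof of this statement at all: Theorem~\ref{thm:CK} is imported verbatim from the reference~\cite{de_bie_diracdunkl_2016}, so there is no internal argument to compare against, and your proposal must be judged on its own merits. It is correct, and it is essentially the standard Cauchy--Kovalevskaya argument used in that reference: split $\CK_{x_3}^{\kappa_0}$ into its $x_3$-even part $A$ and $x_3$-odd part $e_3B$, use that $\Dun{3}$ acts as $\partial_{x_3}$ on $x_3$-even functions and as $\partial_{x_3}+2\kappa_0/x_3$ on $x_3$-odd ones, and check that both parity components of $\DDop\,\CK_{x_3}^{\kappa_0}p$ vanish termwise. One small inaccuracy in your step (a): the odd component (your ``second identity'') is not purely a differentiation-plus-sign check; writing $\CK_{x_3}^{\kappa_0}p=\sum_k a_k x_3^{2k}\widehat{\DDop}^{2k}p+e_3\sum_k b_k x_3^{2k+1}\widehat{\DDop}^{2k+1}p$, it requires the coefficient relation $2(k+1)a_{k+1}=b_k$, which rests on the same Pochhammer identity $(\kappa_0+1/2)_{k+1}=(\kappa_0+1/2)(\kappa_0+3/2)_k$ as the even component (which needs $a_k+(2k+1+2\kappa_0)b_k=0$); both are equally routine, so this is cosmetic. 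A more substantive remark concerns step (b): the dimension-count route to surjectivity is not a free fact, since computing $\dim\Mono_n(\RR^3,\CC^2)=2(n+1)$ requires surjectivity of $\DDop$ on homogeneous spinor-valued polynomials, which in the Dunkl setting is precisely the content of the Fischer decomposition (Theorem~\ref{thm:fischer}) and itself needs positivity of the parameters; it should be cited, not assumed. Your second, recursive argument is the better one and is fully self-contained: writing $F=\sum_{j}x_3^jF_j$ with $F_j$ of degree $n-j$, monogenicity is equivalent to $F_{j+1}=-e_3\widehat{\DDop}F_j/\mu_{j+1}$ with $\mu_{j+1}=j+1$ for odd $j$ and $\mu_{j+1}=j+1+2\kappa_0$ for even $j$, and this is exactly where the hypothesis $\kappa_0>0$ enters (it guarantees $\mu_{j+1}\neq 0$, equivalently the nonvanishing of the Pochhammer denominators in the closed formula). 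Since that recursion determines every monogenic from its restriction $F|_{x_3=0}$ and terminates for degree reasons, it yields injectivity and surjectivity simultaneously and makes the dimension count unnecessary.
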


We are now ready to construct a basis of the monogenics. 

\begin{corollary}
	Let $n\in \NN$. A basis for the space $\Mono_n(\RR^3,\CC^2)$ is given by the $2n+2$ polynomials
	\begin{equation}
		\psi_{n,k}^{\pm}(x_1,x_2,x_3) := \CK_{x_3}^{\kappa_0}(\xun^{n-k} \Phi_k^{\pm}(x_1,x_2)), \quad k=0,\dots , n.
	\end{equation}
\end{corollary}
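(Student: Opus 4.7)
The plan is to combine the Fischer decomposition (Theorem~\ref{thm:fischer}) with the Cauchy--Kovalevskaya isomorphism (Theorem~\ref{thm:CK}), using that each two-dimensional monogenic space $\Mono_j(\RR^2,\CC^2)$ has the explicit two-element basis $\{\monopo_j^+, \monopo_j^-\}$ provided by the preceding proposition.

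First I would assemble, from the Fischer decomposition, an explicit basis of the two-dimensional spinor-valued polynomials of fixed degree. Since
\begin{equation*}
\Poly_n(\RR^2)\otimes \CC^2 \;=\; \bigoplus_{j=0}^{n} \widehat{\xun}^{\,n-j}\,\Mono_j(\RR^2,\CC^2),
\end{equation*}
and each summand is two-dimensional with basis $\{\widehat{\xun}^{\,n-j}\monopo_j^+,\widehat{\xun}^{\,n-j}\monopo_j^-\}$, the collection $\{\widehat{\xun}^{\,n-k}\monopo_k^{\pm}\mid k=0,\dots,n\}$ is a basis of $\Poly_n(\RR^2)\otimes \CC^2$, of cardinality $2n+2$.

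Next I would invoke Theorem~\ref{thm:CK}: for $\kappa_0>0$, the operator $\CK_{x_3}^{\kappa_0}$ is a linear isomorphism from $\Poly_n(\RR^2)\otimes\CC^2$ onto $\Mono_n(\RR^3,\CC^2)$. Here one uses that $\CK_{x_3}^{\kappa_0}$ preserves the total polynomial degree, since every power $(x_3\widehat{\DDop})^k$ in the hypergeometric series is homogeneous of degree $0$ in the full three-dimensional grading (each $x_3$ is balanced by a Dunkl derivative lowering degree by $1$), so the image of a homogeneous polynomial of degree $n$ is again homogeneous of degree $n$. Applying the isomorphism $\CK_{x_3}^{\kappa_0}$ to the basis $\{\widehat{\xun}^{\,n-k}\monopo_k^{\pm}\}$ therefore yields a basis
\begin{equation*}
\psi_{n,k}^{\pm} \;=\; \CK_{x_3}^{\kappa_0}\bigl(\widehat{\xun}^{\,n-k}\monopo_k^{\pm}(x_1,x_2)\bigr), \qquad k=0,\dots,n,
\end{equation*}
of $\Mono_n(\RR^3,\CC^2)$, also of cardinality $2n+2$, which is the desired statement.

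There is essentially no obstacle: the argument is a direct concatenation of two isomorphism statements already in place, and the only point that needs a moment's care is checking that the CK extension preserves the homogeneity degree so that it restricts to an isomorphism on each graded piece. Once that is noted, the corollary is immediate, and one may even use it to read off $\dim \Mono_n(\RR^3,\CC^2)=2n+2$.
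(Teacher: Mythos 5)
Your proposal is correct and is exactly the argument the paper intends: the corollary is stated immediately after Theorem~\ref{thm:fischer} and Theorem~\ref{thm:CK} precisely so that it follows by transporting the Fischer-decomposition basis $\{\widehat{\xun}^{\,n-k}\monopo_k^{\pm}\}$ of $\Poly_n(\RR^2)\otimes\CC^2$ through the CK isomorphism onto $\Mono_n(\RR^3,\CC^2)$. Your added remark that $\CK_{x_3}^{\kappa_0}$ preserves homogeneity degree (each $x_3$ paired with a $\widehat{\DDop}$) is a correct and worthwhile detail that the paper leaves implicit.
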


The polynomials $\psi_{n,k}^{\pm}$ can also be given explicitly. The next proposition works them out similarly to the $W = \ZZ_2\times\ZZ_2\times\ZZ_2$ case done in~\cite{de_bie_diracdunkl_2016}.
\begin{proposition}
	An explicit basis of the space of monogenics $\Mono_n(\RR^3,\CC^2)$ is given, for $k=0,\dots, n$, by
	\begin{equation}
	\begin{aligned}
	\psi_{n,k}^{+}(x_1,x_2,x_3) & =B_{n,k}(\widehat{\xun},x_3)\monopo_k^{+}(x_1,x_2), \\
	\psi_{n,k}^{-}(x_1,x_2,x_3) &= B_{n,k}(\widehat{\xun},x_3)\monopo_k^{-}(x_1,x_2),
	\end{aligned}
	\end{equation}
	with $B_{n,k}$ defined as
	\begin{equation}
	\begin{gathered}
	B_{n,k}(\widehat{\xun},x_3) =  \frac{t!}{(\kappa_0 + 1/2)_{t}}\xun^{2t} \times \\
	\begin{cases}	
	\left(\widehat{\xun} P_{t}^{(\kappa_0-1/2,\, k+1+\widehat{\gamma})}\left(\Upsilon(x)\right) - e_3 x_3\dfrac{t+k+1+\widehat{\gamma}}{ t+\kappa_0 + 1/2}P^{(\kappa_0 + 1/2,\, k+\widehat{\gamma})}_{t}\left( \Upsilon(x)\right)\right), & n-k = 2t+1;
	\\ \left( P_{t}^{(\kappa_0-1/2,\, k+\widehat{\gamma})}\left(\Upsilon(x)\right) - \dfrac{e_3x_3\widehat{\xun}}{\xun^2}P^{(\kappa_0 + 1/2,\, k+1+\widehat{\gamma})}_{t-1}\left(\Upsilon(x)\right)\right), & n-k = 2t;
	\end{cases}
	\end{gathered}
	\end{equation}
	and $\Upsilon(x) := (x_1^2+x_2^2-x_3^2)/(x_1^2+x_2^2+x_3^2)$.
\end{proposition}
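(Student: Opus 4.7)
The plan is to directly expand the Cauchy--Kovalevskaya extension $\CK_{x_3}^{\kappa_0}$ applied to $\widehat{\xun}^{n-k}\monopo_k^{\pm}$ as a power series in $x_3$, using the fact that $\widehat{\DDop}$ and $x_3$ commute so the series acts by $\widehat{\DDop}$ only on the two-dimensional factor $\widehat{\xun}^{n-k}\monopo_k^{\pm}$. The key identities will be the commutation $\comm{\widehat{\DDop},\widehat{\mathbf{x}}^2}=2\widehat{\xun}$ and the anticommutator~\eqref{eq:commrelDDopxun2d}, combined with $\widehat{\DDop}\monopo_k^{\pm}=0$ and $\widehat{\Euler}\monopo_k^{\pm}=k\monopo_k^{\pm}$.

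The first step is to establish by a direct induction the two basic formulas
\begin{align*}
\widehat{\DDop}\,\widehat{\xun}^{2s}\monopo_k^{\pm} &= 2s\,\widehat{\xun}^{2s-1}\monopo_k^{\pm}, &
\widehat{\DDop}\,\widehat{\xun}^{2s+1}\monopo_k^{\pm} &= 2(k+s+1+\widehat{\gamma})\,\widehat{\mathbf{x}}^{2s}\monopo_k^{\pm},
\end{align*}
using $\widehat{\xun}^2=\widehat{\mathbf{x}}^2$ (which is central and commutes with everything) and the anticommutator. Iterating these gives, with $n-k=2t$,
\begin{equation*}
\widehat{\DDop}^{2j}\,\widehat{\xun}^{2t}\monopo_k^{\pm} = 4^{j}(-t)_j(-t-k-\widehat{\gamma})_j\,\widehat{\xun}^{2(t-j)}\monopo_k^{\pm},
\end{equation*}
and analogous formulas in the odd case $n-k=2t+1$, with an extra factor of $\widehat{\xun}$ (or of $2(k+t+1+\widehat{\gamma})$, depending on whether one considers $\widehat{\DDop}^{2j}$ or $\widehat{\DDop}^{2j+1}$).

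The second step is substitution into the two hypergeometric factors of $\CK_{x_3}^{\kappa_0}$ and recognition of the resulting series. After pulling out $\widehat{\mathbf{x}}^{2t}$ (respectively $\widehat{\xun}\widehat{\mathbf{x}}^{2(t-1)}$ or similar), each ${}_0F_1$ series becomes a ${}_2F_1(-t,-t-b;a+1;-x_3^2/\widehat{\mathbf{x}}^2)$ with the appropriate parameters: $(a,b)=(\kappa_0-1/2,k+\widehat{\gamma})$ or shifted versions. I then apply identity~\eqref{eq:identityJacobi} with $x=\widehat{\mathbf{x}}^2$ and $y=x_3^2$ so that $x+y=\xun^2$ and $(x-y)/(x+y)=\Upsilon(x)$, transforming each term into a Jacobi polynomial times a power of $\xun^2$. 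The two terms of $\CK_{x_3}^{\kappa_0}$ combine into the bracketed expressions in the statement, after the simplification $2/((2\kappa_0+1)(\kappa_0+3/2)_{s}) = 1/(\kappa_0+1/2)_{s+1}$.

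The main bookkeeping obstacle will be matching Pochhammer shifts and signs across the four cases (parity of $n-k$, and which of the two ${}_0F_1$ factors one is handling), because the second factor carries an extra $\widehat{\DDop}$ which raises or lowers the power of $\widehat{\xun}$ by one and thereby swaps the parity, so the Jacobi parameters $(\kappa_0\pm 1/2,k+\widehat{\gamma})$ and $(\kappa_0\pm 1/2,k+1+\widehat{\gamma})$ cross-pair in a way that must be tracked carefully. Once the arithmetic is done, the formula for $B_{n,k}(\widehat{\xun},x_3)$ emerges, and the same computation applies to both $\monopo_k^{+}$ and $\monopo_k^{-}$ since the inductive relations depend only on the degree $k$ and the monogenicity.
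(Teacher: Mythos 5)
Your proposal follows essentially the same route as the paper's proof: establish the action of powers of $\widehat{\DDop}$ on $\widehat{\xun}^{b}\monopo_k^{\pm}$ via the anticommutator~\eqref{eq:commrelDDopxun2d} and monogenicity, then expand the Cauchy--Kovalevskaya extension and convert the resulting ${}_2F_1$ sums into Jacobi polynomials through identity~\eqref{eq:identityJacobi} with $x=\widehat{\mathbf{x}}^2$, $y=x_3^2$. Your one-step formula $\widehat{\DDop}\,\widehat{\xun}^{2s+1}\monopo_k^{\pm}=2(k+s+1+\widehat{\gamma})\,\widehat{\mathbf{x}}^{2s}\monopo_k^{\pm}$ is the correct one (the paper's displayed single-application formula omits the $+1$, although its iterated constants $d_{a,b}^{k}$ do include it), and the rest of your bookkeeping matches the paper's argument.
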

\begin{proof}
	Let $M_k \in \Mono_k(\RR^2,\CC^2)$. The commutation relations~\eqref{eq:commrelDDopxun2d} and the fact that $M_k$ is a monogenic imply that 
	\begin{align}\label{eq:proofDxMk}
	\widehat{\DDop}^2( \widehat{\xun} M_k) &= 0, &
		\widehat{\DDop}(\widehat{\xun}^{2\beta +1} M_k ) &=  2(\beta + k + \widehat{\gamma}) \widehat{\xun}^{2\beta} M_k, & 
		\widehat{\DDop}(\widehat{\xun}^{2\beta} M_k)  &=  2\beta \widehat{\xun}^{2\beta-1} M_k. 
	\end{align}
From equations~\eqref{eq:proofDxMk}, a short computation generalizes to 
	\begin{equation}
	\widehat{\DDop}^a ( \widehat{\xun}^b M_k) = \begin{cases}
	d_{a,b}^k\,\widehat{\xun}^{\, b-a}M_k & a\leq b;\\
	0 & a> b;
	\end{cases}
	\end{equation}
	with the value of $d_{a,b}^k$ given by
	\begin{equation}
	d_{a,b}^k = \begin{cases}
	2^{2\alpha}(-\beta)_{\alpha}(-\beta - k -\widehat{\gamma})_{\alpha},        & a=2\alpha,\ b= 2\beta;\\
	-2^{2\alpha+1}(-\beta)_{\alpha+1}(-\beta-k-\widehat{\gamma})_{\alpha},      & a=2\alpha+1,\  b=2\beta;\\
	-2^{2\alpha+1}(-\beta)_{\alpha}(-\beta - k -1 -\widehat{\gamma})_{\alpha+1},& a=2\alpha+1,\  b=2\beta+1;\\
	2^{2\alpha}(-\beta)_{\alpha}(-\beta-k-1-\widehat{\gamma})_{\alpha},         & a=2\alpha,\  b=2\beta+1. 
	\end{cases}
	\end{equation}	
	It is now possible to use the anticommutation relation $\acomm{\widehat{\DDop},e_3} = 0$ and the identity~\eqref{eq:identityJacobi} of Jacobi polynomials to indeed obtain
	\begin{equation*}
	\psi_{n,k}^{\pm}(x_1,x_2,x_3) = B_{n-k}(\widehat{\xun},x_3)\monopo_k^{\pm}(x_1,x_2).
	\end{equation*}
\end{proof}

For $W = \ZZ_2\times D_{2m}$, there is an integral formulation of the inner product introduced abstractly in Section~\ref{sec:Repr}. Take the adapted weight function~\cite{dunkl2014orthogonal} with $z= x_1+ix_2$ and $\conj{z}=x_1-ix_2$
\begin{equation}\label{eq:unitaritymono}
	h_{\kappa_0,\kappa_1,\kappa_m}(x_1,x_2,x_3) := \left|{z^m + \conj{z}^m\over 2}\right|^{\kappa_1} \cdot \left|{z^m - \conj{z}^m\over 2i}\right|^{\kappa_m} \cdot |x_3|^{\kappa_0}.
\end{equation}
Let $X^{\dagger}$ be the transpose of $X$. Define an inner product by
\begin{equation}
	\bili{\psi_1}{\psi_2} := \int_{S_2} (\psi^{\dagger}_1 \psi_2) h^2_{\kappa_0,\kappa_1,\kappa_m}(x_1,x_2,x_3)\dd x_1\dd x_2\dd x_3.
\end{equation}

The structure of the monogenic representations is given in the next two propositions. 
\begin{proposition}\label{prop:monoirrepodd}
	Let $m = 2p+1$. For each $n\in \NN$, the space of monogenics $\Mono_n(\RR^3,\CC^2)$ of the Dunkl--Dirac operator of degree $n$ forms an irreducible representation of dimension $2n+2$ of the symmetry algebra $\SymAlg$ with basis 
	\begin{equation}
		\{\psi_{n,k}^{\pm} \mid k = 0,1\dots , n \}.
	\end{equation}
	The action of the symmetry algebra is given by
	\begin{align}
	\sym{0}\psi_{n,k}^{\pm} 	&= \pm (k + 1/2 + m\kappa_1)\psi_{n,k}^{\pm};&
	\sym{123}\psi_{n,k}^{\pm}	&= \delta i(n + 1 + \kappa_1m + \delta\kappa_0)\psi_{n,k}^{\pm},
	\end{align}
	where $\delta \in\{-1,+1\}$ comes from the realisation of the Clifford algebra element $e_3$. Let $k = rm + \ell$ with $0\leq \ell \leq m-1$ and $\zeta = e^{i\pi/m}$. The group $\dcover{W}$ action is given by 
		\begin{equation}
	\begin{aligned}
	\dsig0 \psi_{n,k}^{\pm} &= \pm\delta(-1)^{n-k} \psi_{n,k}^{\pm};&
	\dsig1 \psi_{n,k}^{\pm} &= \mp i (-1)^{(n-k)} \zeta^{\pm 2\ell}\zeta^{\pm 1} \psi_{n,k}^{\mp};\\
	\dsig m \psi_{n,k}^{\pm} &= \pm i (-1)^{n-k}\psi_{n,k}^{\mp};&
	\dtau \psi_{n,k}^{\pm} &= -\zeta^{\mp(2\ell+1)}\psi_{n,k}^{\pm}.
	\end{aligned}
	\end{equation}
    Furthermore, the representation is unitary.
\end{proposition}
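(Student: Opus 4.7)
The plan is to reduce the proposition to the classification already established in Theorem~\ref{thm:irrepsodd}, supplemented by direct computation of the action of $\sym{0}$, $\sym{123}$ and the generators of $\dcover{W}$ on the explicit basis $\{\psi_{n,k}^\pm\}$. First I would argue that $\Mono_n(\RR^3,\CC^2)$ is a $\SymAlg$-subrepresentation of $\Poly_n(\RR^3)\otimes\CC^2$: every generator of $\SymAlg$ supercommutes with $\DDop$ by construction, and every generator is degree-preserving (the $L_{ij}$'s and $C_{ij}$'s are degree zero, and Clifford/group elements are trivially so). Hence $\Mono_n(\RR^3,\CC^2)=\Poly_n(\RR^3)\otimes\CC^2\cap\ker\DDop$ is stable, and the basis given in the previous corollary exhibits its dimension as $2n+2$.

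Next I would compute the actions. Two observations keep this tractable. For $\sym{123}$, centrality (Proposition~\ref{prop:comm3Dsym} and the surrounding discussion) means one only needs one eigenvalue per irreducible summand, and the identity $\sym{123}=\tfrac12(\acomm{\DDop,\xun}-1)e_1e_2e_3$ together with $\acomm{\DDop,\xun}=2(\Euler+3/2+\gamma)$ acting as $2(n+3/2+m\kappa_1+\kappa_0)$ on $\Poly_n$, plus the realization $e_1e_2e_3\mapsto i\delta$, yields the stated eigenvalue $\delta i(n+1+m\kappa_1+\delta\kappa_0)$ at once. For $\sym{0}=-i\sym{12}$ and for the reflections $\dsig{0},\dsig{1},\dsig{m}$, I would use that $\sym{12}$, $\sigma_0$, and the $\sigma_j$ either commute or interact in a controlled way with $x_3,\partial_{x_3},e_3$, so that their action commutes with $\CK_{x_3}^{\kappa_0}$ up to known signs; the computation thus reduces to the analogous two-dimensional action on the monogenics $\monopo_k^\pm$, which can be read off directly from the explicit formula $\phi_k^\pm = z^\ell\sum_j \ldots$ (the factor $z^\ell$ carries all of the dependence on $\zeta$ that appears in the statement).

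Once the actions are in hand, I would invoke Theorem~\ref{thm:irrepsodd}. Setting $N=n$ and $\lambda = n+1/2+m\kappa_1$, this is exactly the value $\lambda_1$ of Case~I, while $\Lambda = \delta i(n+1+m\kappa_1+\delta\kappa_0)$ coincides with $\Lambda_1$ for $\delta=1$ and with $\Lambda_2$ for $\delta=-1$; in both subcases the theorem asserts irreducibility with no condition on $\kappa_0,\kappa_1$. The $\dcover{W}$-label $\ell$ of the spin representation $V=\langle\psi_{n,0}^+,\psi_{n,0}^-\rangle$ is identified via the $\dtau$-eigenvalue $-\zeta^{\mp(2\ell+1)}$ and via Theorem~\ref{thm:irrepsW}, so that the hypotheses of Theorem~\ref{thm:irrepsodd} (namely the residue condition $2(N+\ell)+1\equiv_m 0$ in Case~I) can be tracked consistently with the Euclidean-division decomposition $k=rm+\ell$.

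For unitarity, I would check that the integral pairing $\bili{\cdot}{\cdot}$ defined by $h^2_{\kappa_0,\kappa_1,\kappa_m}$ in~\eqref{eq:unitaritymono} realises the $*$-structure of Section~\ref{ssec:unitarity}. The weight $h^2$ is $W$-invariant, hence $\dsig{j}^*=\dsig{j}$; the classical skew-adjointness of Dunkl operators in the Dunkl-weighted $L^2$-pairing gives $\Dun{j}^*=-\Dun{j}$, and together with Hermiticity of the Pauli matrices representing the Clifford elements this yields $\DDop^*=-\DDop$ and $\xun^*=\xun$, from which $\sym{ij}^*=-\sym{ij}$ and $\sym{123}^*=-\sym{123}$ follow, matching Section~\ref{ssec:unitarity}. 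Positive-definiteness on the irreducible representation is automatic since $h^2>0$. The main obstacle in all of this is the explicit computation of $\dsig{1}$ acting on the 2D monogenics: it requires careful tracking of the rotation phases $\zeta^{\pm 2\ell}$ produced by the action on $z^\ell$ and on the symmetric sum in $\phi_k^\pm$; the remaining computations are either routine or direct appeals to the hypergeometric identities and the intertwining property of $\CK_{x_3}^{\kappa_0}$.
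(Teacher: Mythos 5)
Your route is the paper's own: compute the $\sym{123}$-eigenvalue from the Scasimir acting on monogenics, reduce $\sym{0}$ and the $\dcover{W}$-action through the Cauchy--Kovalevskaya extension to the two-dimensional monogenics, identify the resulting data with the no-restriction subcase of Case~I of Theorem~\ref{thm:irrepsodd} for irreducibility, and get unitarity from the weight-function pairing together with that same case. (One small formulation point: $\sym{123}=\tfrac12(\comm{\DDop,\xun}-1)e_1e_2e_3$ by definition~\eqref{eq:O123ori}; replacing the commutator by the anticommutator is \emph{not} an identity in $\SymAlg$ but is legitimate exactly on $\ker\DDop$, where $\xun\DDop$ vanishes --- the paper inserts precisely this justification.)

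There is, however, one step that fails as written: your identification of the spin representation $V$. In the classification (Lemma~\ref{lem:inducrep} and Theorem~\ref{thm:irrepsodd}), $V$ is the $\dcover{W}$-module generated by the vector annihilated by $\Lad{+}$, i.e.\ the vector of \emph{maximal} $\sym{0}$-eigenvalue. Since $\sym{0}\psi^{+}_{n,k}=(k+1/2+m\kappa_1)\psi^{+}_{n,k}$ grows with $k$, that vector is $\psi^{+}_{n,n}$, so $V=\langle\psi^{+}_{n,n},\psi^{-}_{n,n}\rangle$; the paper's dictionary is $v_k^{\pm}\leftrightarrow C(n,k)\,\psi^{\mp}_{n,n-k}$, with the index \emph{reversed}. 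Your choice $V=\langle\psi^{+}_{n,0},\psi^{-}_{n,0}\rangle$ is inconsistent with your own $\lambda=n+1/2+m\kappa_1$ (that is the eigenvalue of $\psi^{+}_{n,n}$, not of $\psi^{+}_{n,0}$), and it breaks the Case~I hypothesis you need to invoke: writing $n=rm+\ell$, one has $\dtau\psi^{+}_{n,0}=-\zeta^{-1}\psi^{+}_{n,0}=\zeta^{2p}\psi^{+}_{n,0}$, so your $V$ carries the label $p$ and $2(N+p)+1\equiv_m 2n\equiv_m 2\ell$, which vanishes mod $m$ only when $m\mid n$; with the correct $V$ the label is $p-\ell\ (\mathrm{mod}\ m)$ and $2\bigl(n+(p-\ell)\bigr)+1=2rm+m\equiv_m 0$ for every $n$. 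So for $m\nmid n$ the appeal to Theorem~\ref{thm:irrepsodd} does not go through until you reverse the index. Relatedly, check the $\delta=-1$ row-matching carefully: with the label $\delta=-1$ one has $\Lambda_2\in\{\mp i(\lambda+1/2+\kappa_0)\}$, so landing in the no-restriction row $(\Lambda_2,-1)$ uses that the Euler-operator computation produces $\gamma=\kappa_0+m\kappa_1$ with coefficient $+1$ on $\kappa_0$ independently of $\delta$; with the $\delta\kappa_0$ form of the eigenvalue you quote, the pairing you assert would not hold.
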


\begin{proof}
	Recall $\sym{123} = \tfrac{1}{2}(\comm{\DDop,\, \xun} -1)e_1e_2e_3 $, see equation~\eqref{eq:O123ori}. On any monogenic $\psi_{n,k}$ of degree $n$ we have
	\begin{align*}
	 1/2(\comm{\DDop,\xun}-1)\psi_{n,k} &= 1/2(\DDop\,\xun -1)\psi_{n,k}\\
	&= 1/2(\acomm{\DDop,\xun} -1)\psi_{n,k} = 1/2(2\Euler+3+2\gamma-1)\psi_{n,k}= (n+1+\kappa_1m+\delta\kappa_0)\psi_{n,k}
	\end{align*}
	and thus, according to the realisation of $e_3$,
	\begin{align}
	\sym{123}\psi_{n,k}=\delta i(n+1+\kappa_1m+\delta\kappa_0)\psi_{n,k}^{\pm}.
	\end{align}
	Restricting to the plane $x_1$, $x_2$, we have that $\sym{0} = -\tfrac{i}{2}e_1e_2(\comm{\widehat{\DDop},\widehat{\xun}}-1)$ is the Scasimir of the $\osp(1|2)$ realisation~\eqref{eq:osp12rel2d} times the pseudo-scalar $e_1e_2$. On 2D monogenics, we thus have
	\begin{align*}
	(\comm{\widehat{\DDop},\widehat{\xun}}-1)\Phi_k^{\pm} &= 1/2(\acomm{\widehat{\DDop},\widehat{\xun}} -1)\Phi_k^{\pm}= 1/2(2\Euler + 2 + 2m\kappa_1 - 1) \Phi_k^{\pm} = (k+1/2+m\kappa_1)\Phi_{k}^{\pm}.
	\end{align*}
	With $-ie_1e_2\chi^{\pm} = \pm \chi^{\pm}$, we get
	\begin{equation}
	\sym{0} \psi_{n,k}^{\pm} = \pm (k+1/2 + m\kappa_1)\psi_{n,k}^{\pm}.
	\end{equation}
	By direct computations on the explicit expression of $\phi_k^{\pm}$ we have 
	\begin{align}
	\sigma_0 \phi_{k}^{\pm} &= \phi_k^{\pm},&
	\sigma_m \phi_k^{\pm} &= \overline{\phi_k^{\pm}} = \phi_k^{\mp},&
	\sigma_1 \phi_k^{\pm} &= \zeta^{\pm 2\ell}\phi_k^{\mp}.
	\end{align}
	By example, for $\sigma_1$, note that $\sigma_1(z) = e^{2\pi i /m}\conj{z}$ and  $\sigma_1(\conj{z}) = e^{-2\pi i /m}z$, so
		$$
			\sigma_1 \phi_k^+ = \sigma_1 (z^{\ell}) C_k^{(\kappa,\kappa+1)}(z^m,\conj{z}^m) = \zeta^{2\ell}\conj{z}^{\ell} C_k^{(\kappa,\kappa+1)}(\zeta^{2m} \conj{z},\zeta^{-2m}z^m) = \zeta^{2\ell}\phi^-_k.
		$$
	Furthermore
	\begin{align*}
	\dsig{1} \Phi_k^{\pm} &= \sigma_1(\sin(\pi/m)e_1-\cos(\pi/m)e_2)\phi_k^{\pm}\chi^{\pm}\\
	&= \zeta^{\pm 2\ell} \phi^{\mp}_k (\sin(\pi/m) \mp i\cos(\pi/m))\chi^{\mp} = \mp i \zeta^{\pm 2\ell} \zeta^{\pm 1} \Phi_k^{\mp}.
	\end{align*}

	Adding $e_1 \chi^{\pm} = \chi^{\mp}$, $e_2\chi^{\pm} = \pm i \chi^{\mp}$ and $e_3\chi^{\pm} = \delta \chi^{\pm}$, and the fact that both $\dsig{1}$ and $\dsig{m}$ anticommute with $\widehat{\xun}$, we get what is needed.
	
	To prove that it is irreducible, it is sufficient to prove that each $\psi_{n,k}^{\pm}$ generates the whole representation. From the previous computations, the representation is a renormalized version of the irreducible representation constructed in the no-restriction subcases of cases I of Theorem~\ref{thm:irrepsodd}	with a switch from $v_k^{\pm}$ to $C(n,k)\psi_{n,n-k}^{\mp}$ for certain non-zero constants $C(n,k)$. Therefore, acting on $\psi_{n,k}^{\pm}$ with the operators $\Lad{\pm}$ will be enough to travel between indices of $\psi_{n,k}^{\pm}$.
	
	Unitarity comes from the definition of the weight function~\eqref{eq:unitaritymono}, the Dunkl harmonics used to construct the monogenics and from the case I of Theorem~\ref{thm:irrepsodd}.
 \end{proof}

 A similar proposition holds when $m$ is even. This representation is a renormalized version of the no-restriction subcase of cases I.i in Theorem~\ref{thm:irrepseven} when sending $v_k^{\pm}$ to $C'(n,k)\psi_{n,n-k}^{\mp}$ for some non-zero constants $C'(n,k)$.

\begin{proposition}\label{prop:monoirrepeven}
	Let $m = 2p$. For each $n\in \NN$, the space of monogenics $M_n(\RR^3,\CC^2)$ of the Dunkl--Dirac operator of degree $n$ form an irreducible and unitary representation of dimension $2n+2$ of the symmetry algebra $\SymAlg$ with basis 
	\begin{equation}
		\{\psi_{n,k}^{\pm} \mid k = 0,1\dots , n \}.
	\end{equation}
	The action of the symmetry algebra is given by
	\begin{align}
	\sym{0}\psi_{n,k}^{\pm} 	&= \pm (k + 1/2 + p(\kappa_1+\kappa_m))\psi_{n,k}^{\pm};&	\sym{123}\psi_{n,k}^{\pm}	&= \delta i(n + 1 + p(\kappa_1+\kappa_m)+\delta\kappa_0)\psi_{n,k}^{\pm},
	\end{align}
	where $\delta \in\{-1,+1\}$ comes from the realisation of the Clifford algebra element $e_3$. Let $k = rm + \ell$ with $0\leq \ell \leq m-1$ and $\zeta = e^{i\pi/m}$. The group $\dcover{W}$ action is given by 
		\begin{equation}
	\begin{aligned}
	\dsig0 \psi_{n,k}^{\pm} &= \pm\delta (-1)^{n-k} \psi_{n,k}^{\pm};&
	\dsig1 \psi_{n,k}^{\pm} &= \mp i (-1)^{(n-k)} \zeta^{\pm 2\ell}\zeta^{\pm 1} \psi_{n,k}^{\mp};\\
	\dsig m \psi_{n,k}^{\pm} &= \pm i (-1)^{n-k}\psi_{n,k}^{\mp};&
	\dtau \psi_{n,k}^{\pm} &= -\zeta^{\mp(2\ell+1)}\psi_{n,k}^{\pm}.
	\end{aligned}
	\end{equation}
\end{proposition}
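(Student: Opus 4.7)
The strategy mirrors the proof of Proposition~\ref{prop:monoirrepodd} almost verbatim, with the appropriate adjustments for the even case: namely, the Dunkl constant $\gamma = \tfrac{m}{2}(\kappa_1 + \kappa_m) + \kappa_0 = p(\kappa_1 + \kappa_m) + \kappa_0$ now contains two orbit parameters, and the relevant classification is given by the no-restriction subcase of case I.i of Theorem~\ref{thm:irrepseven} rather than case I of Theorem~\ref{thm:irrepsodd}. I would first verify the eigenvalue formulas for $\sym{123}$ and $\sym{0}$. For $\sym{123}$, I use the expression~\eqref{eq:O123ori} together with $\DDop \psi_{n,k}^{\pm} = 0$ and the anticommutation relation~\eqref{eq:commddopxun} applied with $\gamma = p(\kappa_1+\kappa_m) + \kappa_0$ to compute
\begin{equation*}
\tfrac{1}{2}(\comm{\DDop,\xun}-1)\psi_{n,k}^{\pm} = (\Euler + 1 + p(\kappa_1+\kappa_m)+\kappa_0)\psi_{n,k}^{\pm} = (n+1+p(\kappa_1+\kappa_m)+\kappa_0)\psi_{n,k}^{\pm},
\end{equation*}
and then multiply by $e_1e_2e_3$ realized as $i\delta$. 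For $\sym{0}$, I restrict to the $(x_1,x_2)$-plane, interpret it as the 2D Scasimir of~\eqref{eq:osp12rel2d} multiplied by $-ie_1e_2$, and use $\widehat\gamma = p(\kappa_1+\kappa_m)$ together with $-ie_1e_2\chi^\pm = \pm\chi^\pm$ to get the stated eigenvalue $\pm(k+1/2 + p(\kappa_1+\kappa_m))$.

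Next I would establish the group action. The action of $\sigma_0$, $\sigma_1$, $\sigma_m$ on $\phi_k^{\pm}$ is computed directly from the explicit formulas for $\phi_k^{\pm}$ in the even case: $\sigma_0$ fixes the 2D harmonics, $\sigma_m$ sends $z \mapsto \conj z$ (interchanging $\phi_k^+$ and $\phi_k^-$), and $\sigma_1$ sends $z \mapsto e^{2\pi i/m}\conj z$ with the induced $\zeta^{\pm 2\ell}$-factor exactly as in the odd case since the monomial leading factor $z^\ell$ or $\conj z^\ell$ behaves identically. Combining with the Clifford factors $(\sin(\pi/m)e_1 - \cos(\pi/m)e_2)$ and the action on spinors $e_1\chi^\pm = \chi^\mp$, $e_2\chi^\pm = \pm i\chi^\mp$, $e_3\chi^\pm = \delta\chi^\pm$, and remembering that $\dsig{1}$ and $\dsig{m}$ anticommute with $\widehat\xun$ (so the $\widehat\xun^{n-k}$ in $B_{n,k}$ contributes a sign $(-1)^{n-k}$), one obtains the stated formulas for $\dsig{0}$, $\dsig{1}$, $\dsig{m}$, and hence for $\dtau = \dsig{1}\dsig{m}$.

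For irreducibility, I would match the computed data $(\lambda,\Lambda,V)$ against the classification. Up to the relabelling $v_k^\pm \leftrightarrow C'(n,k)\,\psi_{n,n-k}^\mp$ for suitable non-zero constants $C'(n,k)$, the eigenvalues above coincide with
\begin{equation*}
\lambda = n + 1/2 + p(\kappa_1+\kappa_m), \qquad \Lambda = i(\lambda + 1/2 + \kappa_0\delta) = \Lambda_1,
\end{equation*}
and the index $\ell$ attached to the irreducible spin $\dcover{W}$-representation $Y_{2\ell+1}(-1,\delta)$ satisfies $n + \ell \equiv_m 1-p$, landing us precisely in the ``no-restriction'' row of case I.i of Theorem~\ref{thm:irrepseven}. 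Irreducibility and the fact that $\{\psi_{n,k}^{\pm}\}$ spans the representation then follow from Theorem~\ref{thm:irrepseven}, with the ladder operators $\Lad{\pm}$ moving between distinct indices. Finally, unitarity is obtained from the weight function~\eqref{eq:unitaritymono}: this is the standard Dunkl weight for $W = \ZZ_2 \times D_{2m}$, the Dunkl harmonics $\phi_k^\pm$ used in the Fischer decomposition are orthogonal with respect to it, the Cauchy--Kovalevskaya extension preserves the inner-product structure, and the no-restriction status in case I.i of Theorem~\ref{thm:irrepseven} guarantees the positivity of the $A(k)$ required by Lemma~\ref{lem:unitaritycond}. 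The main bookkeeping obstacle is tracking the sign conventions and the renormalization constants $C'(n,k)$ so that the identification with the abstract basis $\{v_k^\pm\}$ is consistent; once these are fixed, the rest is essentially the same computation as in the odd case.
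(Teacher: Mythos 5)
Your proposal is correct and follows essentially the same route as the paper: the paper proves only the odd case in detail and disposes of the even case exactly as you do, by identifying $\Mono_n(\RR^3,\CC^2)$ with a renormalized copy of the no-restriction subcase of case I.i of Theorem~\ref{thm:irrepseven} via $v_k^{\pm}\mapsto C'(n,k)\,\psi_{n,n-k}^{\mp}$, computing the $\sym{0}$, $\sym{123}$ and $\dcover{W}$ actions as in Proposition~\ref{prop:monoirrepodd} and getting unitarity from the weight function~\eqref{eq:unitaritymono} together with Lemma~\ref{lem:unitaritycond}. One caveat worth recording: your Scasimir computation yields the $\sym{123}$-eigenvalue $i\delta\bigl(n+1+p(\kappa_1+\kappa_m)+\kappa_0\bigr)$, i.e.\ with $+\kappa_0$ rather than the $+\delta\kappa_0$ of the statement; since $\gamma$ in~\eqref{eq:commddopxun} is a scalar this is indeed what the operator identity gives, and the same tension already sits inside the paper's own odd-case proof (whose intermediate line $\tfrac12(2\Euler+3+2\gamma-1)$ produces $+\kappa_0$ before the final line writes $\delta\kappa_0$), so this is an inconsistency inherited from the statement rather than a gap in your argument.
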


%
%

\section{Concluding remarks}\label{sec:conclusion}
We here recall the main results and present the scope of the methods used. On the general 3D symmetry algebra of the Dunkl--Dirac operator, we added Proposition~\ref{prop:sqO123} giving the square of the symmetry $\sym{123}$ for any root system. Specifically for all reducible root systems of rank $3$, we listed all the finite-dimensional irreducible representations as well as sufficient conditions for their unitarity. A polynomial family of irreducible and unitary representations was realised through the important  example of monogenics.

The idea employed here will be difficult to apply to other higher-rank root systems as the ladder operators trick will be likely to fail. However, as it covers all the rank 2 cases, it can serve as a base case on which to support the jump for higher dimensions. Another point of this article was to work out completely the details given from adding a $\ZZ_2$ direct product to the reflection group. 

In future work, we are planning to use the insight gained here to study the $S_n$ reflection group. The hope would be that the rank 3 root system $A_3$ linked to $S_4$ will prove sufficient to be the base case of the study.

Another direction we see stemming from this work would be to formalize the argument on the impact of adding $\ZZ_2$ as direct product to any reflection group. This addition changes the double coverings to a central semi-direct product (see Gorenstein's book~\cite{gorenstein_finite_2007}). We saw that the $\ZZ_2$ part acted as a ``glue'' on the irreducible representations. 

We gave one important realisation of the irreducible representation in Section~\ref{sec:Mono}. However, there are many more possible families of irreducible representations available, as readily seen from Theorems~\ref{thm:irrepsodd} and~\ref{thm:irrepseven}, but we do not know of concrete useful examples of them. As the values where irreducibility and unitarity fail resemble conditions appearing in related work, for example in~\cite{chmutova_representations_2006,de_bie_lian_2020}, it seems interesting to link them. It would also serve as a motivation to study the structure of the representations when they are reducible.

%
%

\section*{Acknowledgements}
We wish to thank the anonymous referee for their careful reading of the manuscript and useful remarks. This project was supported in part by the EOS Research Project [grant number 30889451]. Moreover, ALR holds a scholarship from the Fonds de recherche du Qu\'ebec -- Nature et technologies, [grant number 270527] and RO was supported by a postdoctoral fellowship, fundamental research, of the Research Foundation -- Flanders (FWO), [grant number 12Z9920N].

\begingroup 
\raggedright
\singlespacing
\bibliographystyle{abbrv}
\bibliography{DBLROVdJ_dihedral_revised}
\endgroup

\appendix


\section{Double coverings}\label{app:doublecover}

In this section, the results concerning double covering groups and representation theory are recalled. The main source for this material is the important work of Morris~\cite{Morris76}.

The double coverings of reflection groups come from the restriction of the two non-trivial double coverings of the orthogonal group $O(n)$. For a refreshment on Clifford theory, see~\cite{karoubi1968} or~\cite{atiyah1964}. 

\begin{theorem}[Atiyah, Bott and Shapiro~\cite{atiyah1964}]\label{thm:pin}
The sequences
\begin{equation}
\begin{tikzcd}
0 \arrow[r]& \mathbb Z_2 \arrow[r]& \Pin_+(n) \arrow[r] & \Ortho(n) \arrow[r] & 0
\end{tikzcd}
\end{equation}
\begin{equation}
\begin{tikzcd}
0 \arrow[r]& \mathbb Z_2 \arrow[r]& \Pin_-(n) \arrow[r] & \Ortho(n) \arrow[r] & 0
\end{tikzcd}
\end{equation}
are exact and $\Pin_+(n)$ and $\Pin_-(n)$ are in general two non-isomorphic double covering of $\Ortho(n)$.
\end{theorem}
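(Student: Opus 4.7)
The plan is to realise both groups inside the invertible elements of a Clifford algebra. Let $\Clif_+(n)$ (respectively $\Clif_-(n)$) denote the Clifford algebra on generators $e_1,\dots,e_n$ satisfying $e_i^2=+1$ (resp.\ $e_i^2=-1$) with the usual anticommutation $e_ie_j=-e_je_i$ for $i\neq j$. I would define $\Pin_\pm(n)$ inside $\Clif_\pm(n)^\times$ as the multiplicative subgroup generated by the vectors $v\in\RR^n$ with $\bili{v}{v}=1$, so that $v^2=+1$ in the first case and $v^2=-1$ in the second.

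The first step would be to introduce the twisted adjoint action
\[
\rho\colon \Pin_\pm(n)\longrightarrow \mathrm{GL}(\RR^n),\qquad \rho(w)(x)=\alpha(w)\,x\,w^{-1},
\]
where $\alpha$ is the main involution of the Clifford algebra (sending each generator $v\in\RR^n$ to $-v$). A short calculation on a unit vector $v$ yields $\rho(v)(x)=x-2\bili{x}{v}\,v$, namely the reflection through the hyperplane orthogonal to $v$. Combined with the Cartan--Dieudonn\'e theorem (every element of $\Ortho(n)$ is a product of at most $n$ reflections), this shows that $\rho$ lands in $\Ortho(n)$ and is surjective.

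For the kernel, I would argue that $\rho(w)=\mathrm{id}$ forces $\alpha(w)x=xw$ for every $x\in\RR^n$. Decomposing $w$ according to the $\ZZ_2$-grading of the Clifford algebra and comparing parities shows $w$ must be a scalar, and since $w$ is a product of unit vectors this scalar is forced to be $\pm 1$. This yields the two exact sequences of the statement.

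The remaining and only non-formal point is that $\Pin_+(n)$ and $\Pin_-(n)$ are in general non-isomorphic, and this is the step I expect to be the main obstacle. The cleanest approach I see is to compare the orders of the canonical lifts of a simple reflection: a generator $e_i$ satisfies $e_i^2=+1$ in $\Pin_+(n)$ but $e_i^2=-1$ in $\Pin_-(n)$, so the same reflection in $\Ortho(n)$ admits a lift of order $2$ in the first extension and only lifts of order $4$ in the second. One then needs to check that this distinction is intrinsic to the abstract group rather than an artefact of the chosen section, which can be done either by identifying the two classes in $H^2(\Ortho(n),\ZZ_2)$ for $n\geq 2$, or more concretely by counting involutions in each group.
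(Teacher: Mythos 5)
The paper does not actually prove this statement: it is quoted verbatim from Atiyah--Bott--Shapiro and used as an imported black box, so there is no internal proof to compare against. Your proposal reconstructs the standard argument from that source (and from Lawson--Michelsohn): realising $\Pin_\pm(n)$ inside $\Clif(n)^\times$, the twisted adjoint action $\rho(w)(x)=\alpha(w)xw^{-1}$ sending unit vectors to hyperplane reflections, surjectivity by Cartan--Dieudonn\'e, and the graded-commutation argument pinning the kernel to $\{\pm 1\}$. All of these steps are correct as you state them, including the computation that $\rho(v)$ is the reflection in both sign conventions and the spinor-norm argument forcing the central scalar to be $\pm 1$.

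The only place where your sketch needs tightening is the final step, and you have correctly identified it as the delicate one. Two remarks. First, showing that the two classes in $H^2(\Ortho(n),\ZZ_2)$ differ proves that the two \emph{extensions} are inequivalent, which is weaker than the groups being non-isomorphic: an abstract isomorphism $\Pin_+(n)\to\Pin_-(n)$ is not required to commute with the projections, so distinct cohomology classes alone do not finish the argument. Second, the phrase ``in general'' in the statement only asks for non-isomorphism for some (indeed most) $n$, and this is settled most cleanly by your involution-counting idea made explicit in the smallest case: $\Pin_+(1)=\{\pm 1,\pm e_1\}$ with $e_1^2=+1$ is $\ZZ_2\times\ZZ_2$ (four elements of order at most $2$), while $\Pin_-(1)$ with $e_1^2=-1$ is $\ZZ_4$ (two such elements), so the groups are not abstractly isomorphic. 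The same order-$2$ versus order-$4$ dichotomy for lifts of reflections is exactly what propagates into the paper's Corollary~\ref{coro:doublecoverings}, where the relations $\dsig{j}^2=1$ versus $\dsig{j}^2=z$ distinguish $\dcover{W}^+$ from $\dcover{W}^-$; so your criterion, once anchored by an explicit count as above, is both correct and the one the paper implicitly relies on.
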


Let $W$ be a Coxeter group with Coxeter presentation given by some $m_{ij}\in \NN$
\begin{equation}
W= \left\langle \sigma_1, \dots , \sigma_n \ \middle| \ (\sigma_i\sigma_j)^{m_{ij}} = 1;\, m_{ii} =1\right\rangle.
\end{equation}

From Theorem~\ref{thm:pin}, one gets in general two double coverings of the Coxeter group $W$.
\begin{corollary}[{\cite[Prop. 3.5]{Morris76}}]
The two sequences
\begin{equation}
\begin{tikzcd}
0 \arrow[r]& \mathbb Z_2 \arrow[r]& \dcover{W}^+ \arrow[r] & W \arrow[r] & 0
\end{tikzcd}
\end{equation}
\begin{equation}
\begin{tikzcd}
0 \arrow[r]& \mathbb Z_2 \arrow[r]& \dcover{W}^- \arrow[r] & W \arrow[r] & 0
\end{tikzcd}
\end{equation}
are exact and $\dcover{W}^+$ and $\dcover{W}^-$ are central extensions of $W$.
\end{corollary}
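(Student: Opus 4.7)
The plan is to obtain $\dcover{W}^{\pm}$ as the pullback of $W\subset\Ortho(n)$ under the covering maps $\pi_{\pm}\colon\Pin_{\pm}(n)\to\Ortho(n)$ provided by Theorem~\ref{thm:pin}, and then transfer the exactness and centrality of the $\Pin_{\pm}$-sequences to $\dcover{W}^{\pm}$ by a routine diagram chase. First I would realise $W$ explicitly inside $\Ortho(n)$ via its reflection representation and define
\begin{equation*}
\dcover{W}^{\pm} \;:=\; \pi_{\pm}^{-1}(W) \;\subset\; \Pin_{\pm}(n).
\end{equation*}
Since $\pi_{\pm}$ is a group homomorphism, $\dcover{W}^{\pm}$ is a subgroup of $\Pin_{\pm}(n)$, and the restriction $\pi_{\pm}|_{\dcover{W}^{\pm}}\colon \dcover{W}^{\pm}\to W$ is surjective by construction.

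Next I would compute the kernel of this restricted map: it is exactly $\dcover{W}^{\pm}\cap\ker\pi_{\pm}$, which equals $\ker\pi_{\pm}\cong\ZZ_2$ because the kernel $\{\pm 1\}$ of $\pi_{\pm}$ lies in $\Pin_{\pm}(n)$ and in particular maps to the identity $1\in W$, so it is contained in $\pi_{\pm}^{-1}(W)$. This immediately gives the exactness of both sequences
\begin{equation*}
\begin{tikzcd}
0 \arrow[r]& \ZZ_2 \arrow[r]& \dcover{W}^{\pm} \arrow[r, "\pi_{\pm}"] & W \arrow[r] & 0.
\end{tikzcd}
\end{equation*}

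For the centrality claim, I would use that the kernel $\{\pm 1\}$ of $\Pin_{\pm}(n)\to \Ortho(n)$ is central in $\Pin_{\pm}(n)$ (the element $-1$ of the Clifford algebra commutes with every Clifford element). Since centrality is inherited by subgroups, $\ZZ_2$ is central in $\dcover{W}^{\pm}$, so the extension is central. The main (and essentially only) subtlety is to notice that $\dcover{W}^+$ and $\dcover{W}^-$ are in general non-isomorphic, which is already built into the input from Theorem~\ref{thm:pin}; in the dihedral setting treated in the body of the paper this non-isomorphism is reflected in the sign appearing in the relation $(\dsig{0}\dsig{j})^2 = \pm 1$ depending on whether the Clifford generators square to $+1$ or $-1$.
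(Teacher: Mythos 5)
Your proposal is correct and is essentially the paper's own (implicit) argument: the paper defines $\dcover{W}^{\pm}$ precisely as the restrictions of the coverings $\Pin_{\pm}(n)\to\Ortho(n)$ of Theorem~\ref{thm:pin} to the reflection group $W\subset\Ortho(n)$, and exactness plus centrality follow by the same pullback/kernel computation you give, with centrality of $\{\pm 1\}$ inherited from $\Pin_{\pm}(n)$. Nothing is missing.
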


In fact, there are even generator and relations presentations for $\dcover{W}^+$ and $\dcover{W}^-$.

\begin{theorem}[{Morris~\cite[Thm 3.6]{Morris76}}]\label{thm:morrisdc}
	The two double coverings of $W$ are given by
	\begin{align}
	\dcover{W}^+ &= \left\langle z, \dsig{1}, \dots , \dsig{n} \ \middle| \ z^2 =1, \begin{matrix} (\dsig i\dsig j)^{m_{ij}} = 1, & m_{ij} \text{ odd,}\\  (\dsig{i}\dsig{j})^{m_{ij}} = z, & m_{ij}\text{ even} \end{matrix}\right\rangle,\\
	\dcover{W}^- &=	\left\langle z,\dsig 1, \dots , \dsig n \ \middle| \ z^2=1, (\dsig i\dsig j)^{m_{ij}} = z\right\rangle.
	\end{align}
\end{theorem}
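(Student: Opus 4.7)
The plan is to realize $\dcover{W}^{\pm}$ concretely as a subgroup of $\Pin_{\pm}(n)$ via the Clifford-algebra description of the pin groups, and then argue that the displayed relations present this subgroup. Each simple reflection $\sigma_i$ is reflection along a unit root $\alpha_i$, and under $\Pin_{\pm}(n)\twoheadrightarrow\Ortho(n)$ the two preimages of $\sigma_i$ are $\pm\alpha_i$, viewed as Clifford-algebra elements. We take $\dsig{i}:=\alpha_i$ as our chosen lift and identify the nontrivial central element $z$ with the Clifford scalar $-1$, so that $\dcover{W}^{\pm}$ is the subgroup of $\Pin_{\pm}(n)$ generated by these $\dsig{i}$.

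The first step is to verify, inside the Clifford algebra, that the displayed relations hold. The element $\dsig{i}\dsig{j}$ lies in the real subalgebra spanned by $1,\alpha_i,\alpha_j,\alpha_i\alpha_j$, so it satisfies a quadratic. Setting $c:=\langle\alpha_i,\alpha_j\rangle=-\cos(\pi/m_{ij})$ and using the Clifford rule $\alpha_i\alpha_j+\alpha_j\alpha_i=2c$ in $\Pin_+$ or $-2c$ in $\Pin_-$, a direct computation gives $(\dsig{i}\dsig{j})^2=2c\,(\dsig{i}\dsig{j})-1$ in the first case and $(\dsig{i}\dsig{j})^2=-2c\,(\dsig{i}\dsig{j})-1$ in the second. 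Solving these quadratics places $\dsig{i}\dsig{j}$, inside the complex subalgebra it generates together with the identity, at $e^{\pm i(\pi-\pi/m_{ij})}$ in $\Pin_+$ and at $e^{\pm i\pi/m_{ij}}$ in $\Pin_-$. Raising to the $m_{ij}$-th power yields $(\dsig{i}\dsig{j})^{m_{ij}}=(-1)^{m_{ij}-1}$ in $\Pin_+$, equal to $1$ for odd $m_{ij}$ and $-1=z$ for even $m_{ij}$, whereas $(\dsig{i}\dsig{j})^{m_{ij}}=-1=z$ in $\Pin_-$ for all $m_{ij}$. The boundary case $i=j$ with $m_{ii}=1$ recovers $\dsig{i}^2=1$ in $\Pin_+$ and $\dsig{i}^2=z$ in $\Pin_-$, so every relation of the presentation is satisfied.

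The second step is to show these relations are complete, that is, that the abstract group $G^{\pm}$ defined by the presentation is isomorphic to $\dcover{W}^{\pm}$. The assignment $\dsig{i}\mapsto\sigma_i$, $z\mapsto 1$ respects all the relations and defines a surjection $G^{\pm}\twoheadrightarrow W$, so $|G^{\pm}|\geq|W|$. Since $z$ is central with $z^2=1$, the kernel of $G^{\pm}\twoheadrightarrow W$ is either $\{1\}$ or $\{1,z\}$, forcing $|G^{\pm}|\in\{|W|,\,2|W|\}$. Meanwhile the Clifford realisation provides a homomorphism $G^{\pm}\to\Pin_{\pm}(n)$ sending $\dsig{i}\mapsto\alpha_i$ and $z\mapsto-1$, and since $-1\neq 1$ in the pin group, $z$ is nontrivial in $G^{\pm}$. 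This forces $|G^{\pm}|=2|W|$, and the homomorphism then identifies $G^{\pm}$ with $\dcover{W}^{\pm}$.

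The main technical obstacle is the Clifford computation of $(\dsig{i}\dsig{j})^{m_{ij}}$ in a uniform way: one must solve the quadratic carefully and extract the correct $m_{ij}$-th power while keeping the two signature conventions straight. Once this identity is secured, the presentation-versus-order-count argument is routine.
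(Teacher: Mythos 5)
The paper itself gives no proof of this statement: Theorem~\ref{thm:morrisdc} is quoted from Morris, so your argument can only be measured against the standard one, which is essentially what you wrote. Your Clifford computations are correct in both conventions: with $c=\langle\alpha_i,\alpha_j\rangle=-\cos(\pi/m_{ij})$ one indeed gets $(\dsig{i}\dsig{j})^{m_{ij}}=(-1)^{m_{ij}-1}$ when unit vectors square to $+1$, and $(\dsig{i}\dsig{j})^{m_{ij}}=-1=z$ when they square to $-1$, with the degenerate case $i=j$ giving $\dsig{i}^2=1$ resp.\ $\dsig{i}^2=z$. The presentation-plus-order-count in your final step is also the right mechanism.

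There is, however, one genuine slip, and it occurs at exactly the point this paper cares about. You define $\dcover{W}^{\pm}$ as the subgroup of $\Pin_{\pm}(n)$ generated by the lifts $\dsig{i}=\alpha_i$ alone, whereas the object in the theorem (see the exact sequences in the corollary preceding it) is the full preimage of $W$ under $\Pin_{\pm}(n)\to\Ortho(n)$, i.e.\ the subgroup generated by the $\alpha_i$ \emph{together with} $-1$. These two subgroups differ precisely when $-1\notin\langle\alpha_1,\dots,\alpha_n\rangle$, which happens in $\Pin_+(n)$ whenever every $m_{ij}$ is odd: your own relations then show that $\langle\alpha_1,\dots,\alpha_n\rangle$ is a quotient of $W$, hence isomorphic to $W$ of order $|W|$, while the preimage and the presented group $G^+\cong\ZZ_2\times W$ have order $2|W|$. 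This is exactly the situation $W=S_3$ that the paper highlights (no nontrivial positive double cover). So under your stated definition the identification at the end of your proof fails in that case. The repair is immediate: take the preimage as the definition; your homomorphism $G^{\pm}\to\Pin_{\pm}(n)$ sends $z\mapsto-1$, so its image is precisely that preimage, and your order count then yields the isomorphism. A second, smaller point: the displayed relations do not by themselves make $z$ central in $G^{+}$ (for $G^{-}$ centrality is automatic, since $z=\dsig{j}^{2}$ commutes with every $\dsig{j}$), yet your kernel argument that $\ker(G^{\pm}\to W)$ is $\{1\}$ or $\{1,z\}$ needs centrality to see that the normal closure of $z$ is $\{1,z\}$ and that killing it returns the Coxeter presentation. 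Centrality of $z$ is an implicit relation in Morris's presentation; you should state it explicitly, since it is required on the abstract side of the counting argument (on the Clifford side it is trivially satisfied by $-1$).
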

Remark that the proposition does not entail that the two double coverings are different, and it might be the case that $\dcover{W}^+$ is not a central extension of the group $W$, for example when $W= D_{2m}$ for $m$ odd~\cite{Schur1907}. Furthermore, we here took the liberty to replace the usual notation $-1 \in \dcover{W}$ by $z$ following Humphreys's and Hoffman's example~\cite{hoffman_projective_1992} to avoid confusion.

When considering representations of $\dcover{W}^-$ and $\dcover{W}^+$, those where the commuting element $z$ is acting as the identity are in correspondence with the representations of $W$. Those where $z$ acts as $-1$ are in correspondence with projective representations of $W$ and are called \emph{spin representations}~\cite{Morris76}.

The group considered in this article is $W= \ZZ_2 \times D_{2m}$ and it is presented by
\begin{equation}
W=\left\langle \sigma_0,\sigma_1,\sigma_m \ \middle| \ \sigma_0^2=\sigma_1^2=\sigma_m^2 = (\sigma_0\sigma_1)^2 = (\sigma_0\sigma_m)^2= (\sigma_1\sigma_m)^m =1 \right\rangle,
\end{equation}
 where $\sigma_0$ is the generator for $\ZZ_2$. We have presentations by generators and relations from Theorem~\ref{thm:morrisdc}.
	
 \begin{corollary}\label{coro:doublecoverings}
	The two central extensions of $W$ have the following presentations by generators and relations depending on the parity of $m$.
	\begin{itemize}
		\item ($m$ odd)
		\begin{equation}
		\dcover{W}^+ = \left\langle z, \dsig 0,\dsig 1,\dsig m \ \middle| \ z^2 = \dsig 0^2=\dsig 1^2=\dsig m^2 = (\dsig 1\dsig m)^m = 1;\; (\dsig 0\dsig 1)^2 = (\dsig 0\dsig m)^2= z \right\rangle,
		\end{equation}
		\begin{equation}
		\dcover{W}^- = \left\langle z,\dsig 0,\dsig 1,\dsig m \ \middle| \ z^2 = 1;\; \dsig 0^2=\dsig 1^2=\dsig m^2  = (\dsig 0\dsig 1)^2 = (\dsig 0\dsig m)^2= (\dsig 1\dsig m)^m= z \right\rangle.
		\end{equation}
			\item ($m$ even)
		\begin{equation}
		\dcover{W}^+ = \left\langle z,\dsig 0,\dsig 1,\dsig m \ \middle| \ z^2 = \dsig 0^2=\dsig 1^2=\dsig m^2  = 1;\; (\dsig 0\dsig 1)^2 = (\dsig 0\dsig m)^2= (\dsig 1\dsig m)^m= z \right\rangle,
		\end{equation}
		\begin{equation}
		\dcover{W}^- = \left\langle z,\dsig 0,\dsig 1,\dsig m \ \middle| \ z^2 = 1;\; \dsig 0^2=\dsig 1^2=\dsig m^2  = (\dsig 0\dsig 1)^2 = (\dsig 0\dsig m)^2= (\dsig 1\dsig m)^m= z \right\rangle.
		\end{equation}
	\end{itemize}
\end{corollary}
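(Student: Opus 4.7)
The plan is a direct specialization of Morris's Theorem~\ref{thm:morrisdc} to the Coxeter group $W = \ZZ_2 \times D_{2m}$. First, I would read off the Coxeter exponents $m_{ij}$ from the standard presentation of $W$ given just above the corollary: the diagonal exponents satisfy $m_{ii} = 1$, the mixed exponents are $m_{01} = m_{0m} = 2$, and the dihedral exponent is $m_{1m} = m$.

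For $\dcover{W}^+$, Theorem~\ref{thm:morrisdc} prescribes $(\dsig{i}\dsig{j})^{m_{ij}} = 1$ when $m_{ij}$ is odd and $(\dsig{i}\dsig{j})^{m_{ij}} = z$ when $m_{ij}$ is even. Since $m_{ii} = 1$ is odd, I obtain $\dsig{i}^2 = 1$ for each $i \in \{0, 1, m\}$. Since $m_{01} = m_{0m} = 2$ are even, I obtain $(\dsig{0}\dsig{1})^2 = (\dsig{0}\dsig{m})^2 = z$. The remaining relation $(\dsig{1}\dsig{m})^m$ is the only one sensitive to the parity of $m$: it equals $1$ when $m$ is odd and $z$ when $m$ is even. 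This produces the two cases in the $\dcover{W}^+$ part of the statement.

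For $\dcover{W}^-$, Theorem~\ref{thm:morrisdc} uniformly prescribes $(\dsig{i}\dsig{j})^{m_{ij}} = z$ regardless of the value of $m_{ij}$ (odd or even). Applied to $m_{ii}=1$, $m_{01}=m_{0m}=2$, and $m_{1m}=m$, this gives $\dsig{i}^2 = (\dsig{0}\dsig{1})^2 = (\dsig{0}\dsig{m})^2 = (\dsig{1}\dsig{m})^m = z$, yielding a single presentation independent of the parity of $m$, in agreement with the corollary.

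No substantial obstacle is expected; the entire argument amounts to reading off the Coxeter matrix of $W = \ZZ_2 \times D_{2m}$ and mechanically applying Theorem~\ref{thm:morrisdc}. The only point to be careful about is separating the two parities of $m$ when stating $\dcover{W}^+$, since the parity of $m_{1m}$ changes between the odd and even cases.
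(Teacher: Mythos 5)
Your proposal is correct and matches the paper's own argument exactly: the paper also obtains the corollary by applying Morris's Theorem~\ref{thm:morrisdc} to the Coxeter presentation of $W=\ZZ_2\times D_{2m}$ stated just before it, with Coxeter exponents $m_{ii}=1$, $m_{01}=m_{0m}=2$, $m_{1m}=m$. Your write-up is, if anything, slightly more explicit than the paper, which leaves the reading-off of the Coxeter matrix and the parity case split implicit.
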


From this corollary, we can construct all the finite-dimensional irreducible representations for $\dcover{W}^+$ and $\dcover{W}^-$ in the odd and even cases. %

The classical idea followed here is simply to give the conjugacy classes and then construct as many non-equivalent irreducible finite-dimensional representations, thus exhibiting them all. The results are summarised in Theorem~\ref{thm:irrepsW} at the end of the appendix. We included all the details for $\dcover{W}^+$ as this material is hard to find in recent literature and seems to us of good pedagogical value. 
  
 \subsection{Irreducible representations for the odd case}\label{app:doublecoverodd}
 When $m = 2p+1$ is odd, there are $4p+5 = 2m+3$ conjugacy classes for $\dcover{W}^+$. For ease of notation, let $\dtau := \dcover{\sigma}_1\dcover{\sigma}_m$ be the even element of order $m$. We start by listing the conjugacy classes of $\dcover{W}^+$:
\begin{equation}	
 \begin{gathered}
 	\{1\}, \quad \{z\}, \quad \{\dsig{0},z\dsig{0}\},\\
 	\{\dtau,\dtau^{2p}\}, \{\dtau^2,\dtau^{2p-1}\} , \dots , \{\dtau^p,\dtau^{p+1}\}, \\
 	\{z\dtau,z\dtau^{2p}\}, \{z\dtau^2,S\dtau^{2p-1}\} , \dots , \{z\dtau^p,z\dtau^{p+1}\},\\
	\{\dsig{0}\dtau,z\dsig{0}\dtau^{2p}\}, \{\dsig{0}\dtau^2,z\dsig{0}\dtau^{2p-1}\} , \dots , \{\dsig{0}\dtau^p,z\dsig{0}\dtau^{p+1}\},\\
	\{z\dsig{0}\dtau,\dsig{0}\dtau^{2p}\}, \{z\dsig{0}\dtau^2,\dsig{0}\dtau^{2p-1}\} , \dots , \{z\dsig{0}\dtau^p,\dsig{0}\dtau^{p+1}\},\\
	\{\dsig{m}, \dtau\dsig{m} , \dtau^2\dsig{m}, \dots , \dtau^{2p}\dsig{m}, z\dsig{m}, z\dtau\dsig{m} , \dots , z\dtau^{2p}\dsig{m}\},\\
	\{\dsig{0}\dsig{m}, \dsig{0}\dtau\dsig{m} , \dsig{0}\dtau^2\dsig{m}, \dots , \dsig{0}\dtau^{2p}\dsig{m}, z\dsig{0}\dsig{m}, z\dsig{0}\dtau\dsig{m} , \dots , z\dsig{0}\dtau^{2p}\dsig{m}\}.
 \end{gathered}
 \end{equation}
 And indeed, counting the elements in the conjugacy classes gives:
 \begin{equation*}
 1 + 1 + 2 + p\times 2 + p\times 2 + p\times 2 + p\times 2 + (4p+2) + (4p+2) = 16p+8 =8m =|\dcover{W}^+|.
 \end{equation*}
 
We now construct the $4p+5$ non-equivalent irreducible finite-dimensional representations. 

Let $V$ be a finite-dimensional vector space and $X: \dcover{W}^+ \longrightarrow Gl(V)$  be a non-trivial finite-dimensional irreducible representation of $\dcover{W}^+$. Consider an eigenvector for $\dtau$, $z$ and $\dsig{0}$, noted $v_1\in V$. As $z^2 = \dsig{0}^2 = \dtau^m = 1$, we have that
 \begin{align}
 zv_1 &= \varepsilon v_1, & \varepsilon \in \{-1,+1\};\\
 \dsig{0}v_1 &= \delta v_1, & \delta \in \{-1,+1\};\\
 \dtau v_1 &= \zeta^{2\ell}v_1, & \zeta := e^{\pi i / m}, \ \ell\in \{0,1,\dots, m-1\}.
 \end{align}
 Put $v_2 := \dsig{m} v_1$. It follows from $\dtau \dsig{m}\dtau=\dsig{m}$ that
 \begin{align*}
 \dtau v_2 &= \dtau\dsig{m}v_1 = \dsig{m}\dtau^{-1}v_1 = \zeta^{-2\ell} v_2,&
 \dsig{m}v_2 &= \dsig{m}^2v_1 = v_1,\\
 zv_2 &= \dsig{m}zv_1 = -\dsig{m}v_1 = -v_2,&
 \dsig{0} v_2 &= \dsig{0}\dsig{m}v_1 = z\dsig{m}\dsig{0}v_1 = \varepsilon\delta v_2.
 \end{align*}
 This means that $\langle v_1,v_2 \rangle$ is a submodule of $V$; as $V$ is irreducible, $\langle v_1,v_2 \rangle = V$ and thus $\dim V \leq 2$.
 
The process divides in two cases whether $\zeta^{2\ell} = \zeta^{-2\ell}$ or not. First, assume $\zeta^{2\ell} \neq \zeta^{-2\ell}$. Then $\ell\in \{1,\dots, m-1\}$. In this case, $v_1$ and $v_2$ have two different eigenvalues, and they are therefore linearly independent, so the dimension of $V$ is $2$. 
 
 The matrices of the representation $X$ in the basis $\{v_1,v_2\}$ are given by
\begin{equation}
 \begin{aligned}
	X(\dtau) &= \begin{pmatrix}
	\zeta^{2\ell} & 0 \\ 0 & \zeta^{-2\ell}
	\end{pmatrix}, & X(\dsig{m}) &= \begin{pmatrix}
	0&1\\1&0
	\end{pmatrix}, &
	X(z) &= 
	\begin{pmatrix}
	\varepsilon & 0 \\ 0&\varepsilon	
	\end{pmatrix}, & X(\dsig{0}) &= \begin{pmatrix}
	\delta & 0 \\ 0&\varepsilon\delta
	\end{pmatrix}.
 \end{aligned}
 \end{equation}
 
 Now count how many non-equivalent representations this gives. A first remark is that we can ask of the imaginary part of $\zeta^{2\ell}$ to be positive. Indeed, if it is not the case then for $\varepsilon = 1$, switching $v_1$ and $v_2$ will make it so, and for $\varepsilon = -1$, changing $v_1$ and $v_2$ and sending $\delta$ to $-\delta$ will give an equivalent representation. With the positive condition on the imaginary part of $\zeta^{2\ell}$, such repetitions are avoided. This condition results in a restriction on the values $\ell$ can take: $\ell\in \{1,\dots , p\}$. The values of $\delta$ and $\varepsilon$ are independent in the set $\{-1,+1\}$ and so there are a total of $4p$ non-equivalent irreducible representations of dimension 2.
 
 Second, assume $\zeta^{2\ell} = \zeta^{-2\ell}$. As $m = 2p+1$, this only let $\zeta^{2\ell} = 1$, so $\ell=0$. Then the matrices in the generating set $\{v_1,v_2\}$ are given by
 \begin{equation}
 \begin{aligned}
 X(\dtau) &= \begin{pmatrix}
 1 & 0 \\ 0 & 1
 \end{pmatrix}, & X(\dsig{m}) &= \begin{pmatrix}
 0&1\\1&0
 \end{pmatrix},&
 X(z) &= 
 \begin{pmatrix}
 \varepsilon & 0 \\ 0&\varepsilon	
 \end{pmatrix}, & X(\dsig{0}) &= \begin{pmatrix}
 \delta & 0 \\ 0&\varepsilon\delta
 \end{pmatrix}.
 \end{aligned}
 \end{equation}
 
Further divide according to the value of $\varepsilon$. If $\varepsilon =1$, then notice that the actions on $v_1+v_2$ and $v_1-v_2$ are given by
\begin{align*}
\dtau(v_1+v_2) &=  v_1+v_2, & \dsig{m} (v_1+v_2) &= v_2+v_1, &
z(v_1+v_2) &= v_1+v_2, & \dsig{0}(v_1+v_2) &= \delta(v_1+v_2);\\
\dtau(v_1-v_2) &=  v_1-v_2, & \dsig{m} (v_1-v_2) &= -(v_1-v_2),&
z(v_1-v_2) &= v_1-v_2, & \dsig{0}(v_1-v_2) &= \delta(v_1-v_2).
\end{align*}
 Therefore, both $\langle v_1 + v_2\rangle$ and $\langle v_1-v_2\rangle$ are submodules of $V$. The irreducibility of $V$ forces one of them to be trivial and the other, to generate $V$. So $v_2 = v_1$ or $v_2 = -v_1$. Adding the choice of value for $\delta$, this gives $4$ one-dimensional irreducible representations.
 
When $\varepsilon  = -1$, the vectors $v_1$ and $v_2$ have a different eigenvalue for $\dsig{0}$, so they are linearly independent, and thus $v_1$ and $v_2$ are a basis for a two-dimensional representation. The two representations given by $\delta = -1$ and $\delta = 1$ are equivalent after switching $v_1$ and $v_2$ so there is only one more two-dimensional irreducible representation.

The total is $4p + 5$ irreducible representations, the same number as conjugacy classes, so all of them have been found. The dimensions match as indeed the sum of the squares of the dimensions gives the order of the group: $4p \times 2^2 + 4\times 1^2 + 1 \times 2^2 = 16p + 8 = |\dcover{W}^+|$.
 
Similar steps will also give the $4p+5$ irreducible representations of $\dcover{W}^-$. The only differences are that the extra relations constrict the values of $\varepsilon$ according to the action of $\dtau$ and that $\dsig{m}$ possibly is of order $4$, so $\delta$ takes values in $\{-1,+1,-i,+i\}$. All the representations are given in Theorem~\ref{thm:irrepsW}.

 \subsection{Irreducible representations for the even case} \label{app:doublecovereven}
 When $m=2p$ is even, there are $4p + 6 = 2m+6$ conjugacy classes for $\dcover{W}^+$. Still keeping the shorthand notation $\dtau := \dsig{1}\dsig{m}$ they go as
 \begin{equation}
 \begin{gathered}
  	\{1\}, \quad \{z\}, \quad \{\sigma_0,z\sigma_0\},\\
 \{\dtau,z\dtau^{2p-1}\}, \{\dtau^2,z\dtau^{2p-2}\} , \dots , \{\dtau^p,z\dtau^{p}\}, \{\dtau^{p+1},z\dtau^{p-1}\} \dots, \{\dtau^{2p-1}, -\dtau\},\\
 \{\dsig{0}\dtau,\dsig{0}\dtau^{2p-1}\}, \{\dsig{0}\dtau^2,\dsig{0}\dtau^{2p-2}\} , \dots , \{\dsig{0}\dtau^{p-1},\dsig{0}\dtau^{p+1}\}, \{\dsig{0}\dtau^p\},\\
 \{z\dsig{0}\dtau,z\dsig{0}\dtau^{2p-1}\}, \{z\dsig{0}\dtau^2,z\dsig{0}\dtau^{2p-2}\} , \dots , \{z\dsig{0}\dtau^{p-1},z\dsig{0}\dtau^{p+1}\}, \{z\dsig{0}\dtau^p\},\\
 \{\dsig{m}, \dtau^2\dsig{m}, \dots , \dtau^{2p-2}\dsig{m}, z\dsig{m}, z\dtau^2\dsig{m} , \dots , z\dtau^{2p-2}\dsig{m}\},\\
 \{\dsig{0}\dsig{m}, \dsig{0}\dtau^2\dsig{m} , \dots , \dsig{0}\dtau^{2p-2}\dsig{m}, - \dsig{0}\dsig{m}, z\dsig{0}\dtau^2\dsig{m} , \dots , z\dsig{0}\dtau^{2p-2}\dsig{m}\},\\
  \{\dtau\dsig{m}, \dtau^3\dsig{m}, \dots , \dtau^{2p-1}\dsig{m}, z\dtau\dsig{m}, z\dtau^3\dsig{m} , \dots , z\dtau^{2p-1}\dsig{m}\},\\
 \{\dsig{0}\dtau\dsig{m}, \dsig{0}\dtau^3\dsig{m} , \dots , \dsig{0}\dtau^{2p-1}\dsig{m}, - \dsig{0}\dtau\dsig{m}, z\dsig{0}\dtau^3\dsig{m} , \dots , z\dsig{0}\dtau^{2p-1}\dsig{m}\}.
 \end{gathered}
 \end{equation}
Adding the elements of the conjugacy classes gives the order of the group $\dcover{W}^+$:
\begin{equation*}
1+1+2+(2p-1)\times 2 + (p-1)\times 2 + 1 + (p-1)\times 2 + 1 + 2p + 2p +2p +2p = 16p = 8m = |\dcover{W}^+|.
\end{equation*}
Let $V$ be a finite-dimensional vector space and $X: \dcover{W}^+ \longrightarrow GL(V)$ be a non-trivial finite-dimensional irreducible representation of $\dcover{W}^+$. Take $v_1\in V$ to be an eigenvector for $\dtau$, $z$ and $\dsig{0}$. From $\dtau^{m} = z$, $z^2= 1$ and $\dsig{0}^2 =1$, it  follows 
\begin{align}
zv_1 &= \varepsilon v_1, & \varepsilon \in \{-1,+1\};\\
\dsig{0}  v_1 &= \delta v_1, & \delta \in \{-1,+1\};\\
\dtau v_1 &= \zeta^{\ell} v_1, & \zeta := e^{\pi i / m}, \ \ell\in \{0,1,\dots, 2m-1\}.
\end{align}
The main difference with the odd case is that $\dtau$ now has order $2m$ instead of order $m$.

Define $v_2 := \dsig{m} v_1$. We have
 \begin{align*}
\dtau v_2 &= \dtau\dsig{m}v_1 = \dsig{m}\dtau^{-1}v_1 = \zeta^{-\ell} v_2,&
\dsig{m}v_2 &= \dsig{m}^2v_1 = v_1,\\
zv_2 &= \dsig{m}zv_1 = z\dsig{m}v_1 = \varepsilon v_2,&
\dsig{0} v_2 &= \dsig{0}\dsig{m}v_1 = z\dsig{m}\dsig{0}v_1 = \varepsilon\delta v_2.
\end{align*}
So $\langle v_1, v_2\rangle \subset V$ is a submodule and by irreducibility of $V$ that means $V = \langle v_1, v_2\rangle$. We condition on whether $\zeta^{\ell} = \zeta^{-\ell}$. This only happens when $\ell\in\{0,m\}$.

When $\zeta^{\ell} \neq \zeta^{-\ell}$, then $\ell\in \{1,\dots m-1, m+1, \dots 2m-1\}$ and the matrices realising the actions of the elements in the basis $\{v_1,v_2\}$ are given by
\begin{equation}
\begin{aligned}
X(\dtau) &= \begin{pmatrix}
\zeta^{\ell} & 0 \\ 0 & \zeta^{-\ell}
\end{pmatrix}, & X(\dsig{m}) &= \begin{pmatrix}
0&1\\1&0
\end{pmatrix},&
X(z) &= 
\begin{pmatrix}
\varepsilon & 0 \\ 0&\varepsilon	
\end{pmatrix}, & X(\dsig{0}) &= \begin{pmatrix}
\delta & 0 \\ 0&\varepsilon\delta
\end{pmatrix}.
\end{aligned}
\end{equation}
Notice that the condition $\dtau^m = z$ restricts the possible values of $\varepsilon$ to $\varepsilon = (-1)^{\ell}$. Indeed, as $\ell$ is neither $0$ nor $m$ then $\dtau^m v_1 = \zeta^{m\ell} v_1 = (-1)^{\ell} v_1$, but also $\dtau^m v_1 = zv_1 = \varepsilon v_1$.


 We again demand that the imaginary part of $\zeta^{\ell}$ is positive and thus $\ell\in\{1, \dots, m-1\}$. There are $2p-1$ choices for $\ell$ and, for each $\ell$, 2 choices for $\delta$: a total of $4p-2$ representations.

 The cases following from $\zeta^{\ell} = \zeta^{-\ell}$ follow almost the same argument as the odd case. When $\ell =0$, 
  it gives $4$ one-dimensional non-equivalent representations. The actions are then $\dsig{m}v_1 = \beta v_1$ and $\dsig{0}v_1 = \delta v_1$ with $\beta,\delta\in \{-1,+1\}$. There is no possible representation when $\varepsilon = -1$ as $\dtau^mv_1 = v_1$ and $\dtau^m = z$.
 
 When $\ell = m$, 
  it gives $4$ one-dimensional non-equivalent representations with $\varepsilon =1$. Namely $\dsig{m}v_1 = \beta v_1$ and $\dsig{0}v_1 = \delta v_1$ with $\beta,\delta\in \{-1,+1\}$. There is again no possible representation when $\varepsilon = -1$ as $\dtau^mv_1 = v_1$ and $\dtau^m = z$.

In total, we get $4p-2$ two-dimensional non-equivalent irreducible representations from the first case and  $8$ one-dimensional non-equivalent irreducible representations from the second and third cases, for a total of $4p+6$, which is equal to the number of conjugacy classes. Hence, all of them have been found. Furthermore, indeed $(4p-2)\times 2^2 + 8\times 1^2 = 16p = |\dcover{W}^+|$.

The construction of the irreducible representations of the negative double covering $\dcover{W}^-$ follows in almost the same way, with only a slight difference on $\dsig{m}$. In the following theorem, we summarise our results.

\begin{theorem}\label{thm:irrepsW}
	Let $m$ be a positive integer and $W = \ZZ_2\times D_{2m}$. The complete sets of non-equivalent irreducible representations of the two double coverings $\dcover{W}^+$ and $\dcover{W}^-$ are given by the following tables.
	\begin{itemize}
		\item \textbf{(Odd $\mathbf{m = 2p+1}$).} For the positive double covering $\dcover{W}^+$, the $4p+5$ finite-dimensional non-equivalent irreducible representations are given by $4$ one-dimensional representations $X_i$ and $4p+1$ two-dimensional representations $Y_j = Y_j(\varepsilon,\delta)$ with actions given on generators $z$, $\dsig{0}$, $\dsig{m}$ and $\dtau:= \dsig{1}\dsig{m}$ by
		\begin{equation*}
			\begin{array}{r|cccc|cccccc}
			\dcover{W}^+	& \multicolumn{4}{c}{\text{one-dimensional}} & \multicolumn{6}{c}{\text{two-dimensional}}\\
				\hline
				& X_1 & X_2 & X_3 &X_4 & Y_0 & Y_1 & \cdots & Y_j & \cdots &  Y_p\\
				z& 1 & 1 & 1 &1 & \smallmat{-1&0\\0&-1} & \smallmat{\varepsilon & 0 \\ 0 & \varepsilon} & \cdots & \smallmat{\varepsilon & 0 \\ 0 & \varepsilon} & \cdots &\smallmat{\varepsilon & 0 \\ 0 &\varepsilon}\\
				\dsig{0} & 1 & -1 & 1 & -1 & \smallmat{1&0\\0&-1}  & \smallmat{\delta & 0 \\ 0& \varepsilon\delta} & \cdots &\smallmat{\delta & 0 \\ 0& \varepsilon\delta} & \cdots & \smallmat{\delta & 0 \\ 0& \varepsilon\delta}\\
				\dsig{m} & 1 & 1 & -1 & -1 & \smallmat{0&1\\1&0} & \smallmat{0&1\\1&0} & \cdots & \smallmat{0&1\\1&0} & \cdots &\smallmat{0&1\\1&0}\\
				\dtau & 1 & 1 &1 & 1 & \smallmat{1&0\\0&1} & \smallmat{\zeta^2 & 0 \\ 0&\zeta^{-2}} & \cdots & \smallmat{\zeta^{2j} & 0 \\ 0&\zeta^{-2j}} & \cdots & \smallmat{\zeta^{2p}&0\\0&\zeta^{-2p}}
			\end{array}
		\end{equation*}
		where $\delta$ and $\varepsilon$ take values in the set $\{-1,+1\}$ and $\zeta := e^{\pi i/m}$. 
		
		The $4p+5$ non-equivalent finite-dimensional irreducible representations of the negative covering $\dcover{W}^-$ are given by $4$ one-dimensional representations $X_i$ and $4p+1$ two-dimensional representations $Y_j = Y_j(\delta)$, with their actions on generators given in the next table 
		\begin{equation*}
		\begin{array}{r|cccc|cccccc}
		\dcover{W}^-	& \multicolumn{4}{c}{\text{one-dimensional}} & \multicolumn{6}{c}{\text{two-dimensional}}\\
		\hline
		& X_1 & X_2 & X_3 &X_4 & Y_m & Y_1 & \cdots & Y_j & \cdots &  Y_{2p}\\
		z& 1 & 1 & 1 &1 & \smallmat{-1&0\\0&-1} & \smallmat{-1 & 0 \\ 0 & -1} & \cdots & \smallmat{(-1)^j & 0 \\ 0 & (-1)^j} & \cdots &\smallmat{1 & 0 \\ 0 &1}\\
		\dsig{0} & 1 & -1 & 1 & -1 & \smallmat{i&0\\0&-i}  & \smallmat{\delta & 0 \\ 0& -\delta} & \cdots &\smallmat{\delta & 0 \\ 0& (-1)^j\delta} & \cdots & \smallmat{\delta & 0 \\ 0& \delta}\\
		\dsig{m} & 1 & 1 & -1 & -1 & \smallmat{0&-1\\1&0} & \smallmat{0&-1\\1&0} & \cdots & \smallmat{0&(-1)^j\\1&0} & \cdots &\smallmat{0&1\\1&0}\\
		\dtau & 1 & 1 &1 & 1 & \smallmat{-1&0\\0&-1} & \smallmat{\zeta & 0 \\ 0&\zeta^{-1}} & \cdots & \smallmat{\zeta^j & 0 \\ 0&\zeta^{-j}} & \cdots & \smallmat{\zeta^{2p}&0\\0&\zeta^{-2p}}
		\end{array}
		\end{equation*}
		where $\zeta := e^{\pi i/m}$ and $\delta$ takes value in $\{-1,+1\}$ if $j$ is even, and in $\{-i,+i\}$ if $j$ is odd.
		\item \textbf{(Even $\mathbf{m = 2p}$).}
		For the positive covering $\dcover{W}^+$, the $4p+6$ finite-dimensional non-equivalent irreducible representations are given by $8$ one-dimensional representations $X_i$ and $4p-2$ two-dimen\-sional representations $Y_j = Y_j(\delta)$ with actions given on generators $z,\dsig{0}, \dsig{m}$ and $\dtau:= \dsig{1}\dsig{m}$ by:
		\begin{equation*}
		\begin{array}{r|cccccccc|ccccc}
		\dcover{W}^+	& \multicolumn{8}{c}{\text{one-dimensional}} & \multicolumn{5}{c}{\text{two-dimensional}}\\
		\hline
		& X_1 & X_2 & X_3 &X_4 & X_5 & X_6 & X_7 & X_8 & Y_1 & \cdots & Y_j &\cdots& Y_{2p-1} \\
		z& 1 & 1 & 1 &1 &1&1&1&1 & \smallmat{-1 &0 \\ 0&-1} &\cdots & \smallmat{(-1)^j &0 \\ 0&(-1)^j}& \cdots &\smallmat{-1 &0 \\ 0&-1} \\ 
		\dsig{0} & 1 & -1 & 1 & -1 & 1 & -1 & 1 & -1 & \smallmat{\delta &0 \\ 0 &-\delta} &\cdots& \smallmat{\delta &0 \\ 0 &(-1)^j\delta} &\cdots& \smallmat{\delta &0 \\ 0 &-\delta} \\
		\dsig{m} & 1 & 1 & -1 & -1 & 1 & 1 & -1 & -1 &\smallmat{0&1\\1&0} &\cdots& \smallmat{0&1\\1&0} &\cdots&\smallmat{0&1\\1&0}\\
		\dtau & 1 & 1 &1 & 1 & -1 & -1 &-1 & -1 &\smallmat{\zeta &0 \\ 0& \zeta^{-1}} &\cdots& \smallmat{\zeta^j &0 \\ 0& \zeta^{-j}} &\cdots& \smallmat{\zeta^{2p-1} &0 \\ 0& \zeta^{-(2p-1)}}
		\end{array}
		\end{equation*}
		where $\delta \in \{-1,+1\}$ and $\zeta := e^{\pi i/m}$. 
		
		The $4p+6$ finite-dimensional non-equivalent irreducible representations of $\dcover{W}^-$ are given by $8$ one-dimensional representations $X_i$ and $4p-2$ two-dimensional representations $Y_j=Y_j(\delta)$ presented by their actions on generators in the next table
		\begin{equation*}
		\begin{array}{r|cccccccc|ccccc}
		\dcover{W}^-	& \multicolumn{8}{c}{\text{one-dimensional}} & \multicolumn{5}{c}{\text{two-dimensional}}\\
		\hline
		& X_1 & X_2 & X_3 &X_4 & X_5 & X_6 & X_7 & X_8 & Y_1 &\cdots&Y_j &\cdots& Y_{2p-1} \\
		z& 1 & 1 & 1 &1 &1&1&1&1 & \smallmat{-1 &0 \\ 0&-1}&\cdots& \smallmat{(-1)^j &0 \\ 0&(-1)^j} &\cdots&\smallmat{-1 &0 \\ 0&-1} \\ 
		\dsig{0} & 1 & -1 & 1 & -1 & 1 & -1 & 1 & -1 & \smallmat{\delta &0 \\ 0 &-\delta} &\cdots& \smallmat{\delta &0 \\ 0 &(-1)^j\delta} &\cdots& \smallmat{\delta &0 \\ 0 &-\delta} \\
		\dsig{m} & 1 & 1 & -1 & -1 & 1 & 1 & -1 & -1 &\smallmat{0&-1\\1&0} &\cdots& \smallmat{0&(-1)^j\\1&0} &\cdots&\smallmat{0&-1\\1&0}\\
		\dtau & 1 & 1 &1 & 1 & -1 & -1 &-1 & -1 &\smallmat{\zeta &0 \\ 0& \zeta^{-1}} &\cdots& \smallmat{\zeta^j &0 \\ 0& \zeta^{-j}} &\cdots& \smallmat{\zeta^{2p-1} &0 \\ 0& \zeta^{-(2p-1)}}
		\end{array}
		\end{equation*}
		where $\zeta := e^{\pi i /m}$ and $\delta \in \{-1,+1\}$ if $j$ is even, and $\delta \in \{-i,+i\}$ if $j$ is odd.
	\end{itemize}
\end{theorem}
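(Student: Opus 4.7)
The plan is to construct the irreducible representations directly from the presentations given in Corollary~\ref{coro:doublecoverings}, and then verify completeness by counting conjugacy classes and checking the sum-of-squares identity $\sum (\dim)^2 = |\dcover{W}^\pm|$. First, I would enumerate the conjugacy classes in each of the four scenarios (odd/even $m$, positive/negative covering), using the commuting element $z$, the order of $\dsig{0}$, $\dsig{m}$ and the element $\dtau := \dsig{1}\dsig{m}$. For the positive covering with $m$ odd, one finds $2m+3$ classes; for $m$ even, $2m+6$; the negative covering gives the same counts but with different internal structure because $\dsig{m}^2 = z$ and $\dsig{0}^2 = z$ there. This fixes the number of non-equivalent irreducibles to be found.

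Next, I would build the representations by diagonalizing the abelian subgroup generated by $\langle z, \dsig{0}, \dtau\rangle$ inside each irreducible module $V$. Fix a joint eigenvector $v_1 \in V$ with eigenvalues $zv_1 = \varepsilon v_1$, $\dsig{0}v_1 = \delta v_1$, $\dtau v_1 = \zeta^{a} v_1$ (for appropriate power $a$ depending on whether $\dtau$ has order $m$ or $2m$), and set $v_2 := \dsig{m}v_1$. The relations $\dtau\dsig{m}\dtau = \dsig{m}$, $\dsig{0}\dsig{m} = z^{?}\dsig{m}\dsig{0}$ and $z$-centrality then determine how $\dtau$, $\dsig{0}$, $z$ act on $v_2$. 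The submodule $\langle v_1,v_2\rangle$ is stable, so irreducibility forces $V = \langle v_1,v_2\rangle$ and $\dim V \le 2$. The key dichotomy is whether $\zeta^a = \zeta^{-a}$: when they differ, the two vectors are linearly independent and yield a genuine two-dimensional irreducible; when they coincide (only for $a\in\{0,m\}$ in the even case, $a=0$ in the odd case), the module splits further into one-dimensional pieces according to the eigenvalues of $\dsig{m}$.

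The main obstacle — really the main bookkeeping task — is handling the constraint $\dtau^m = z^?$ in each covering separately. In $\dcover{W}^+$ with $m$ odd, $\dtau^m = 1$, so $a\in\{0,\ldots,m-1\}$ and $\varepsilon$ is unconstrained; in $\dcover{W}^+$ with $m$ even, $\dtau^m = z$, which forces $\varepsilon = (-1)^a$; in $\dcover{W}^-$ the relation is always $\dtau^m = z$, restricting $\varepsilon$ similarly, while simultaneously $\dsig{m}^2 = z$ forces $\dsig{m}$ to have order $4$, so $(\dsig{m}v_1)$ has a different normalization and the scalar $\delta$ must run over $\{\pm 1\}$ or $\{\pm i\}$ according to the parity of $a$. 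To remove redundant equivalent representations one also has to identify pairs $(a,\delta)$ and $(m-a,\text{adjusted }\delta)$ via the swap $v_1\leftrightarrow v_2$; this is why in the tables $j$ runs over a restricted range.

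Finally, I would conclude by verifying two things. First, non-equivalence of the listed representations follows by inspecting the traces of $\dtau$, $\dsig{0}$ and $z$ (different $a$ give different character on $\dtau$; different $\delta$ give different trace of $\dsig{0}$; different $\varepsilon$ give different action of $z$). Second, completeness is checked by the count $\sum(\dim)^2 = 16p+8 = 8m$ in the odd case (four $1^2$ plus $(4p+1)\cdot 2^2$) and $\sum(\dim)^2 = 16p = 8m$ in the even case (eight $1^2$ plus $(4p-2)\cdot 2^2$). Since this equals $|\dcover{W}^\pm|$ and matches the number of conjugacy classes already enumerated, the list is exhaustive, and the tables in the statement of Theorem~\ref{thm:irrepsW} collect the resulting matrix realisations.
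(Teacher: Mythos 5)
Your proposal follows essentially the same route as the paper's Appendix~\ref{app:doublecover} proof: enumerate the conjugacy classes, take a joint eigenvector $v_1$ of $z$, $\dsig{0}$, $\dtau$, set $v_2 := \dsig{m}v_1$ to bound the dimension by two, split on whether the $\dtau$-eigenvalue is self-inverse, handle the $\dtau^m = z$ constraint per covering, identify swapped pairs $v_1 \leftrightarrow v_2$, and confirm completeness by matching the count against the number of conjugacy classes and the sum-of-squares identity. One small correction: in $\dcover{W}^-$ the restriction of $\delta$ to $\{-1,+1\}$ or $\{-i,+i\}$ comes from $\dsig{0}^2 = z$ (forcing $\delta^2 = \varepsilon = (-1)^j$), not from $\dsig{m}^2 = z$, which instead only alters the off-diagonal normalization of $\dsig{m}$.
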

\end{document}